

\documentclass[preprint,12pt]{elsarticle}

\usepackage{enumitem}



 \usepackage{graphicx}

\usepackage{amsmath}


\usepackage{amssymb}
\usepackage{amsthm}

\usepackage{mathtools}
\mathtoolsset{showonlyrefs=true}
\mathtoolsset{centercolon}

\theoremstyle{plain}
\newtheorem{theorem}{Theorem}[section]
\newtheorem{corollary}[theorem]{Corollary}
\newtheorem{proposition}[theorem]{Proposition}
\newtheorem{lemma}[theorem]{Lemma}

\theoremstyle{definition}
\newtheorem{definition}[theorem]{Definition}

\newtheorem{example}[theorem]{Example}

\theoremstyle{remark}
\newtheorem{remark}[theorem]{Remark}

\newproof{pf}{proof}

\usepackage{soul,color}
\setstcolor{red}
\setul{0.8ex}{0.2ex}
\usepackage{proofread}
\ifdefined\verproofread
\else
	\noproofreadmark
\fi





\newcommand{\norm}[1]{\lVert{#1}\rVert}

\newcommand{\abs}[1]{\lvert #1 \rvert}

\DeclareMathOperator\supp{supp}
\DeclareMathOperator{\interior}{int}

\ifdefined\usehyperref
\usepackage{hyperref}
\hypersetup{
pdfborder = {0 0 0},
linkcolor = {black},
citecolor = {black},
urlcolor= {black}}
\fi

\ifdefined\refcheck
\usepackage{refcheck}
\fi


\begin{document}

\begin{frontmatter}

\title{A Limit Formula for Joint Spectral Radius with $p$-radius of
Probability Distributions}

\author[ttu]{M.~Ogura\corref{cor}}
\ead{masaki.ogura@ttu.edu}

\author[ttu]{C.~F.~Martin}
\ead{clyde.f.martin@ttu.edu}

\cortext[cor]{Corresponding author}

\address[ttu]{Department of Mathematics and Statistics, Texas Tech
University, Broadway and Boston, Lubbock, TX 79409-1042, USA}


\begin{abstract}
In this paper we show a characterization of the joint spectral radius
of a set of matrices as the limit of the $p$-radius of an associated
probability distribution when $p$ tends to $\infty$. Allowing the set
to have infinitely many matrices, the obtained formula extends the
results in the literature. Based on the formula, we then present a
novel characterization of the stability of switched linear systems for
an arbitrary switching signal via the existence of stochastic Lyapunov
functions of any higher degrees. Numerical examples are presented to
illustrate the results.
\end{abstract}

\begin{keyword}
Joint spectral radius\sep $p$-radius\sep Lyapunov
functions\sep absolute exponential stability 

\MSC
15A60
\sep
15A48
\sep 
93D05
\sep 
93E15
\end{keyword}

\end{frontmatter}

\section{Introduction}

The joint spectral radius of a set of matrices, originally introduced
in the short note~\cite{Rota1960}, is a natural extension of the
spectral radius of a single matrix and has found various applications
in, for example, wavelet theory, functional analysis, and systems and
control theory (see the monograph \cite{Jungers2009} for detail). This
wide range of applications has motivated many authors to study the
computation of joint spectral radius. Though even the approximation of
joint spectral radius is in general an NP-hard
problem~\cite{Tsitsiklis1997a}, there are now a vast amount of
efficient methods for the approximation of joint spectral radius
\cite{Blondel2005,Parrilo2008,Protasov2010} and also their
implementations on mathematical softwares~\delled{{\cite{Jungers2011t}}}\delwsed\added{\cite{Vankeerberghen2014a}}.

The result \cite{Blondel2005} by Blondel and Nesterov is of a
particular theoretical interest because it characterizes joint
spectral radius as the limit of another joint spectral characteristics
called $L^p$-norm joint spectral radius when $p$ tends to $\infty$.
Given a finite set~$\mathcal M = \{A_1, \dotsc, A_N\}$ of real and
square matrices of a fixed dimension and a parameter~$p\geq 1$, the
{\it $L^p$-norm joint spectral radius} ({\it $p$-radius} for short) of
$\mathcal M$ is defined by
\begin{equation} \label{eq:classicGJSR}
\rho_{p,\mathcal M} 
:= 
\lim_{k\to\infty}\biggl(N^{-k}\sum_{i_1, \dots, i_k\in \{1, \dotsc, N  \}}
\norm{A_{i_k} \dotsm A_{i_1}}^p\biggr)^{1/kp}, 
\end{equation}
where $\norm{\cdot}$ denotes any matrix norm. Firstly
introduced~\cite{Jia1995,Wang1996} for $p=1$ and then
extended~\cite{Protasov1997} for a general $p$, $L^p$-norm joint
spectral radius has found many applications in various areas of
applied mathematics (see \cite{Jungers2011b} and references therein).
In particular $p$-radius has an application to the stability theory of
stochastic switched systems~\cite{Jungers2010,Ogura2012b,Ogura2013},
which is a dynamical system whose structure randomly experiences
abrupt changes~\cite{Lin2009,Shorten2007}.

Recently this ``original'' version of $L^p$-norm joint spectral radius
was extended to probability distributions~\cite{Ogura2012b}. Roughly
speaking, the extension makes it possible to consider the $p$-radius
of \changed{an infinite set of matrices}{a set of {\it
infinitely} many matrices} and is useful when, for example, one wants
to study the stability of a stochastic switched system with infinitely
many subsystems that naturally arise as a result of uncertainty in
modeling of dynamical systems. Being an extension, the $p$-radius of
distributions inherits~\cite{Ogura2012b} from the $p$-radius of sets
of matrices the characterization~\cite{Protasov1997} as the spectral
radius of a matrix. Though the characterization is valid only either
when $p$ is an even integer or when matrices in~$\mathcal M$ leave a
common proper cone invariant, it still covers several interesting
cases that appear in the stability analysis of stochastic switched
linear systems. Then it is natural to expect that the other properties
of the $p$-radius of sets of matrices can be extended to the
$p$-radius of distributions.

In this paper we show that the characterization by Blondel and
Nesterov~\cite{Blondel2005} is still valid when we use the $p$-radius
of probability distributions. This extension in particular circumvents
the finiteness limitation of the original characterization. Since the
proof for the original result relies on the finiteness of the number
of matrices, it cannot be directly applied to the current setting.
Instead, our proof extensively utilizes \changed{nearly most unstable
trajectories of an associated switched linear system and is
independent of the number of possible matrices}{so-called cone linear
absolute norms \cite{Seidman2005} and the approximation of a given set
of possibly infinitely many matrices by \change{its subset}{subsets}
having a certain uniformity property}.

As a theoretical application of the characterization of joint spectral
radius, we will discuss the stability of switched linear systems. We
will present a novel characterization of the stability of a switched
linear system for an arbitrary switching signal with a so-called
stochastic Lyapunov function~\cite{Bertram1959,Ahmadi1979,Feng1992},
which is a positive definite functional whose value decreases along
the trajectory of the switched linear system in expectation. The
characterization in particular deduces the existence of stochastic
Lyapunov functions from stability and hence is a variant of the
converse Lyapunov theorems~\cite{Dayawansa1999,Molchanov1989} in
systems and control theory. The construction of stochastic Lyapunov
functions is also investigated.

This paper is organized as follows. After preparing necessary
notations in Section~\ref{sec:math}, in Section~\ref{sec:JSCs} we give
a brief overview of the joint spectral radius of sets of matrices and
the $L^p$-norm joint spectral radius of probability distributions.
Then Section~\ref{sec:JSRchar} gives the characterization of joint
spectral radius as the limit of $L^p$-norm joint spectral radius. In
Section~\ref{sec:ConvLypThm} we discuss the application of the
characterization to the stability theory of switched linear systems.

\section{Mathematical Preliminaries} \label{sec:math}

\added{Let $\mathbb{R}_+$ denote the set of nonnegative real numbers.}
For $x\in \mathbb{R}^n$ its Euclidean norm is denoted by
$\norm{x}$\added{, if not explicitly stated otherwise}. \delled{The usual
inner product in $\mathbb{R}^n$ is denoted by $(\cdot,
\cdot)$.}{\delwsed}For a real matrix~$A$ its maximal singular value is
denoted by~$\norm{A}$. If $A$ is square then its spectral radius is
denoted by $\rho(A)$. When $A$ is symmetric and negative semidefinite
we write $A\preceq 0$. Let $\mathcal M \subset \mathbb{R}^{n\times
n}$. The interior and the boundary of $\mathcal M$ are denoted by
$\interior \mathcal M$ and $\partial \mathcal M$, respectively. The
distance between $A$ and $\mathcal{M}$ is defined by $d(A, \mathcal M)
:= \inf_{M\in\mathcal M}\norm{A-M}$.

Let $\Omega$ be a probability space with a probability measure~$\mu$.
The support of~$\mu$, denoted by~$\supp \mu$, is defined as the closed
set such that $\mu((\supp \mu)^c) = 0$ and, if $G$ is open and $G \cap
{(\supp \mu)} \neq \emptyset$, then $\mu(G \cap \supp \mu) > 0$.
Dirac's delta distribution on~$x \in \Omega$ is denoted by $\delta_x$.
For an integrable random variable~$X$ on~$\Omega$ its expected value
is denoted by $E[X]$. \delled{The next proposition is fundamental.}

\delxed{\begin{lemma}[\cite{Lang1986}]\label{lem:nullset.old}
\delled{The boundary of a convex set in $\mathbb{R}^{n\times n}$ is a
null set with respect to the Lebesgue measure.}
\end{lemma}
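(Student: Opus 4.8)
The plan is to identify $\mathbb{R}^{n\times n}$ with $\mathbb{R}^d$ for $d:=n^2$, so that the claim becomes: the topological boundary $\partial C$ of any convex set $C\subseteq\mathbb{R}^d$ is Lebesgue null. First I would dispose of the degenerate case $\interior C=\emptyset$. A convex set with empty interior has affine hull of dimension at most $d-1$ (otherwise it would contain $d+1$ affinely independent points and hence a full-dimensional simplex), so it is contained in a proper affine subspace, which is itself null; thus both $C$ and $\partial C$ are null. Hence I may assume $\interior C\neq\emptyset$ and, after a translation, that $0\in\interior C$.

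Next I would remove boundedness by a countable exhaustion. For a convex set with nonempty interior one has $\partial C=\partial\overline{C}$, and $\partial C=\bigcup_{R\in\mathbb{N}}(\partial C\cap B_R)$ with $B_R$ the open ball of radius $R$ about $0$, so it suffices to show each $\partial C\cap B_R$ is null. The key observation is that $C_R:=\overline{C}\cap\overline{B_R}$ is a bounded closed convex set with $0\in\interior C_R$ for large $R$, and that any point of $\partial C$ lying in $B_R$ is also a boundary point of $C_R$, because $C_R$ and $\overline{C}$ agree on the neighborhood $B_R$. Therefore $\partial C\cap B_R\subseteq\partial C_R$, and it is enough to treat bounded closed convex bodies.

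The core step is the bounded case, where I would concentrate the effort via a scaling argument. Let $C$ be closed, bounded, convex with $0\in\interior C$, and let $p(x):=\inf\{t>0:x/t\in C\}$ be its Minkowski gauge, which is continuous and positively homogeneous and satisfies $C=\{x:p(x)\le 1\}$ and $\partial C=\{x:p(x)=1\}$. Then $\lambda\,\partial C=\{x:p(x)=\lambda\}$ for every $\lambda>0$, so the dilates $\{\lambda\,\partial C\}_{\lambda\in(1/2,1)}$ are pairwise disjoint level sets, all contained in the bounded set $C$. Since scaling by $\lambda$ multiplies $d$-dimensional Lebesgue measure by $\lambda^d$, we get $\mathrm{Leb}(\lambda\,\partial C)=\lambda^d\,\mathrm{Leb}(\partial C)\ge 2^{-d}\,\mathrm{Leb}(\partial C)$ for each such $\lambda$. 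If $\mathrm{Leb}(\partial C)$ were positive, this would produce uncountably many pairwise-disjoint measurable subsets of $C$, each of measure at least $2^{-d}\,\mathrm{Leb}(\partial C)>0$; summing over any countable subfamily would exceed the finite measure $\mathrm{Leb}(C)$, a contradiction. Hence $\mathrm{Leb}(\partial C)=0$.

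The main obstacle I anticipate is not the scaling computation but the supporting geometric facts that make it rigorous: that $0\in\interior C$ forces each ray from the origin to meet $\partial C$ in exactly one point, so that $\partial C=\{p=1\}$ and the dilates are genuinely disjoint, together with the reduction facts $\partial C=\partial\overline{C}$ and $\partial C\cap B_R\subseteq\partial C_R$. These are standard consequences of convexity, but they are precisely where convexity is indispensable, since the conclusion fails for general sets whose boundaries can carry positive measure. As an alternative route kept in reserve, one could cover $\partial C$ by coordinate patches on which it is the graph of a convex, hence continuous, function over a hyperplane, and then invoke Fubini's theorem using the fact that the graph of a continuous function is null; this avoids the gauge at the cost of a local covering argument.
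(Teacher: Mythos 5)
Your proposal is correct, but it takes a genuinely different route from the paper: the paper gives no argument of its own for this statement, it simply quotes it as a known result, namely Theorem~1 of Lang's note \cite{Lang1986} (and the revised Lemma~\ref{lem:nullset} on $\partial\pi$ is likewise proved by convexity of $\pi$ plus that citation). You instead give a self-contained elementary proof: first dispose of the case $\interior C=\emptyset$ (the set then lies in a proper affine subspace, which is null), then localize via $\partial C=\partial\overline{C}$ and $\partial C\cap B_R\subseteq\partial\bigl(\overline{C}\cap\overline{B_R}\bigr)$ to reduce to a bounded closed convex body with $0$ in its interior, and finally run the Minkowski-gauge scaling argument: the dilates $\lambda\,\partial C=\{p=\lambda\}$ for $\lambda\in(1/2,1)$ are pairwise disjoint subsets of $C$, each of measure $\lambda^{d}\operatorname{Leb}(\partial C)\ge 2^{-d}\operatorname{Leb}(\partial C)$, which is incompatible with $\operatorname{Leb}(C)<\infty$ unless $\operatorname{Leb}(\partial C)=0$. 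The supporting facts you invoke (continuity of the gauge, $C=\{p\le 1\}$, $\interior C=\{p<1\}$, locality of the boundary) are standard and correctly deployed, so there is no gap; the only caveat is that your remark about every ray meeting $\partial C$ exactly once is true only in the bounded case, which is the only place you use it. As for what each approach buys: the paper's citation keeps the exposition short and rests on a published result whose scope is broader (Lang's note is about Lebesgue measurability of arbitrary convex sets), whereas your argument makes the lemma independent of \cite{Lang1986} at the cost of about a page, and your fallback route (covering $\partial C$ by graphs of convex functions and applying Fubini) would serve equally well.
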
}


\subsection{Proper Cones}

A subset~$K\subset \mathbb{R}^n$ is called a {cone} if $K$ is closed
under multiplication by nonnegative numbers. The cone is said to be
solid if it possesses a nonempty interior. We say that a cone is
pointed if $x, -x \in K$ implies $x=0$. We say that $K$ is proper if
it is closed, convex, solid, and pointed. Let $K \subset \mathbb{R}^n$
be a proper cone.  A matrix $A\in \mathbb{R}^{n\times n}$ is said to
leave $K$ invariant\delwsed\delled{or to be $K$-nonnegative}, written
$A\geq^K 0$, if $AK\subset K$. The set of all \added{real} matrices
leaving $K$ invariant is denoted by $\pi(K)$ or simply by $\pi$. Let
$B \in \mathbb{R}^{n\times n}$. By $A\geq^K B$ we mean $A-B \geq^K 0$.
A set~$\mathcal M \subset \mathbb{R}^{n\times n}$ is said to leave $K$
invariant if any $A\in \mathcal M$ leaves $K$ invariant. $A$ is said
to be $K$-positive if $A(K - \{0\}) \subset \interior K$ and we write
$A>^K 0$. \added{We understand $A >^K B$ in the obvious way.} It is
known that~\cite[p.~16]{Berman1979}
\begin{equation}\label{eq:intpi(K)}
\interior  \pi = 
\{A\in \mathbb{R}^{n\times n}: A>^K 0 \}.
\end{equation} 
\added{Also we can show the next lemma. 
\begin{lemma}\label{lem:nullset}
The boundary $\partial \pi$ is a null set with respect to the Lebesgue
measure.
\end{lemma}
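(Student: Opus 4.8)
The plan is to reduce the statement to the classical fact that the topological boundary of a convex set in a finite-dimensional space is Lebesgue null, and then to verify that $\pi$ is indeed a closed convex set with nonempty interior. Writing $N = n^2$ and identifying $\mathbb{R}^{n\times n}$ with $\mathbb{R}^N$, it therefore suffices to establish these structural properties of $\pi$ and, if one prefers a self-contained treatment, to supply a short proof of the convex-boundary fact itself.

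First I would check that $\pi$ is a closed convex cone. Closedness is immediate, since $\pi = \{A : AK \subseteq K\}$ and $K$ is closed: if $A_m \to A$ with $A_m \in \pi$, then for each $x \in K$ we have $A_m x \in K$ and $A_m x \to A x$, whence $Ax \in K$. For convexity, recall that the proper cone $K$ is closed under nonnegative scaling and, being convex, under addition, so that $sK + tK \subseteq K$ for all $s, t \ge 0$. Consequently, for $A, B \in \pi$, scalars $s, t \ge 0$, and any $x \in K$, one has $(sA + tB)x = s(Ax) + t(Bx) \in sK + tK \subseteq K$, so $sA + tB \in \pi$; in particular $\pi$ is a convex cone. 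That $\pi$ is solid follows from~\eqref{eq:intpi(K)}: choosing $v \in \interior K$ and a vector $w$ in the interior of the dual cone of $K$ (so that $w^{\top}x > 0$ for every $x \in K \setminus \{0\}$), the rank-one matrix $vw^{\top}$ satisfies $vw^{\top}(K \setminus \{0\}) \subseteq \interior K$, hence $vw^{\top} >^K 0$ and $\interior \pi \neq \emptyset$.

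It then remains to argue that $\partial C$ is Lebesgue null for a convex body $C$ with nonempty interior, which I would establish by a radial-graph argument. After translating an interior point to the origin, the Minkowski gauge $g(x) = \inf\{t > 0 : x/t \in C\}$ is a finite, positively homogeneous, convex function, hence locally Lipschitz, with $\partial C = \{g = 1\}$. Because $g$ is strictly positive off the origin and satisfies $g(\lambda x) = \lambda g(x)$, it is strictly increasing along each ray, so the map $u \mapsto u/g(u)$ is a Lipschitz parametrization of $\partial C$ by the unit sphere $S^{N-1}$; a Lipschitz image of an $(N-1)$-dimensional set has $N$-dimensional measure zero, giving the claim for bounded $C$. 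Since $\pi$ is unbounded I would then pass through a countable exhaustion: writing $\partial \pi = \bigcup_{m}\bigl(\partial \pi \cap \interior B(0,m)\bigr)$ and noting that each piece lies inside $\partial\bigl(\pi \cap \overline{B(0,m)}\bigr)$, the boundary of a bounded convex body, one concludes $\abs{\partial \pi} = 0$. The main obstacle is not any individual estimate but making this reduction to bounded convex bodies rigorous—specifically, verifying the inclusion $\partial \pi \cap \interior B(0,m) \subseteq \partial\bigl(\pi \cap \overline{B(0,m)}\bigr)$ so that the radial-graph bound applies on each piece, while the countably many null pieces genuinely exhaust $\partial \pi$.
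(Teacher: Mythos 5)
Your proposal is correct and takes essentially the same route as the paper, whose entire proof is to observe that $\pi$ is convex and invoke the classical fact (cited as \cite[Theorem~1]{Lang1986}) that the boundary of a convex set in $\mathbb{R}^{n\times n}$ is Lebesgue null. The additional material you develop — closedness, solidity via the rank-one matrix $vw^{\top}$, and the gauge/exhaustion proof of the classical fact — is sound but unnecessary once that fact is cited; indeed convexity alone suffices, since the cited theorem requires neither closedness nor nonempty interior.
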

\begin{proof}
In general, the boundary of a convex set in $\mathbb{R}^{n\times n}$
is a null set with respect to the Lebesgue
measure~\cite[Theorem~1]{Lang1986}. This proves the claim because
$\pi$ is clearly convex.
\end{proof}}

\added{A class of norms called cone linear absolute norms (see, e.g.,
\cite{Seidman2005}) plays an important role in this paper.} A norm
$\norm{\cdot}$ on $\mathbb{R}^n$ is said to be cone
absolute~\cite{Seidman2005} with respect to a proper cone~$K$ if, for
every $x\in\mathbb{R}^n$,
\begin{equation}\label{eq:ConeAbs}
\norm{x} = \inf_{\substack{v, w\in K\\x=v-w}}\norm{v+w}. 
\end{equation}
Also we say that $\norm{\cdot}$ is cone linear with respect to $K$ if
there exists $f$ in the dual cone $K^* := \{ f\in\mathbb{R}^n : f^\top
x \geq 0 \text{ for every $x\in K$}  \}$ such that
\begin{equation} \label{eq:ConeLin}
\norm{x} = f^\top x
\end{equation}
for every $x\in K$. A norm that is cone linear and cone absolute with
respect to a \delled{common}{\delwsed}proper cone is said to be {cone
linear absolute}. It is known~\cite{Seidman2005} that every $f\in
\interior (K^*)$ yields a cone linear absolute norm~$\norm{\cdot}_f$
determined by \eqref{eq:ConeAbs} and~\eqref{eq:ConeLin}. The
norm~$\norm{\cdot}_f$ induces a norm on $\mathbb{R}^{n\times n}$ as
\begin{equation}\label{eq:normA_f}
\norm{A}_f 
:= 
\sup_{x\in\mathbb{R}^n}\frac{\norm{Ax}_f}{\norm{x}_f}.
\end{equation}
\added{When $f$ is irrelevant we simply denote $\norm{\cdot}_f$ by
$\norm{\cdot}$.} Some useful properties of this norm are quoted from
\cite{Seidman2005} in the next lemma.

\begin{lemma}\label{lem:SeidmanInducednorm}
Let $K$ be a proper cone and let $\norm{\cdot}$ be a cone linear
absolute norm with respect to $K$.
\begin{enumerate} 
\item If $A \geq^K 0$ then 
\begin{equation}\label{eq:norm_f}
\norm{A} = \sup_{x\in K}\frac{\norm{Ax}}{\norm{x}}. 
\end{equation}

\item If $A_i \geq^K B_i \geq^K 0$ for every $i=1$, $\dotsc$, $k$ then
\begin{equation}\label{eq:ProdMono}
\norm{A_k \dotsm A_1}
\geq 
\norm{B_k \dotsm B_1}. 
\end{equation}
\end{enumerate}
\end{lemma}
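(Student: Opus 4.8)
The plan is to derive both statements directly from the two defining properties of a cone linear absolute norm, namely the absoluteness identity~\eqref{eq:ConeAbs} and the linearity identity~\eqref{eq:ConeLin}, together with the elementary fact that the proper cone $K$ and the set $\pi(K)$ are closed under addition (and $\pi(K)$ also under multiplication). For the first claim, the inequality $\sup_{x\in K}\norm{Ax}/\norm{x} \le \norm{A}$ is immediate from the definition~\eqref{eq:normA_f}, so only the reverse inequality requires work. Given an arbitrary $x\in\mathbb{R}^n$, I would use~\eqref{eq:ConeAbs} to choose $v, w\in K$ with $x = v-w$ and $\norm{v+w}$ arbitrarily close to $\norm{x}$. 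Since $A\geq^K 0$, both $Av$ and $Aw$ lie in $K$, so $Ax = Av - Aw$ is an admissible decomposition in~\eqref{eq:ConeAbs} applied to $Ax$, whence $\norm{Ax}\le\norm{Av+Aw} = \norm{A(v+w)}$. Because $K$ is a convex cone it is closed under addition, so $v+w\in K$ and therefore $\norm{A(v+w)} \le \bigl(\sup_{y\in K}\norm{Ay}/\norm{y}\bigr)\norm{v+w}$. Taking the infimum over all such decompositions gives $\norm{Ax}\le\bigl(\sup_{y\in K}\norm{Ay}/\norm{y}\bigr)\norm{x}$, and since $x$ is arbitrary this proves $\norm{A}\le\sup_{y\in K}\norm{Ay}/\norm{y}$, establishing~\eqref{eq:norm_f}.

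The second claim I would reduce to two ingredients. The first is a scalar-monotonicity fact: if $A\geq^K B\geq^K 0$ then $\norm{A}\ge\norm{B}$. To see this, fix $x\in K$; then $Bx\in K$ and $(A-B)x\in K$ (since $A-B\geq^K 0$), and these sum to $Ax$. Applying the linear functional $f$ of~\eqref{eq:ConeLin}, which represents $\norm{\cdot}$ on $K$, yields $\norm{Ax} = f^\top(Bx) + f^\top((A-B)x) = \norm{Bx} + \norm{(A-B)x} \ge \norm{Bx}$. Dividing by $\norm{x}$, taking the supremum over $x\in K$, and invoking~\eqref{eq:norm_f} for both $A$ and $B$ gives $\norm{A}\ge\norm{B}$. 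The second ingredient is that the cone order is preserved under products: if $A\geq^K B\geq^K 0$ and $C\geq^K D\geq^K 0$, then $AC\geq^K BD\geq^K 0$. This follows from the identity $AC-BD = (A-B)C + B(C-D)$, since each summand is a product of two matrices in $\pi(K)$, and $\pi(K)$ is closed under both multiplication and addition; that $BD\geq^K 0$ is clear. Iterating this fact over $i=1,\dotsc,k$ shows $A_k\dotsm A_1 \geq^K B_k\dotsm B_1 \geq^K 0$, and applying the scalar-monotonicity fact to this single pair of matrices yields exactly~\eqref{eq:ProdMono}.

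I expect the only delicate point to be the handling of the infimum in the first claim, specifically making sure that~\eqref{eq:ConeAbs} may be applied to $Ax$ through the particular decomposition $Ax = Av - Aw$ inherited from $x$, and that passing to the infimum over the decompositions of $x$ is legitimate. Everything else reduces to the closure properties of $K$ and $\pi(K)$ and to the representation of $\norm{\cdot}$ by $f$ on $K$, so the real task is to organize these observations rather than to surmount any genuine analytic difficulty; since the statements are quoted from~\cite{Seidman2005}, the goal is chiefly to reconstruct the short arguments behind them.
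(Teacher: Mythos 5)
Your proposal is correct and follows essentially the same route as the paper: the paper's proof simply cites \cite{Seidman2005} for part 1 and for the $k=1$ case of part 2, and then reduces the general case to $k=1$ via the relation $A_k \dotsm A_1 \geq^K B_k \dotsm B_1 \geq^K 0$ --- exactly the reduction you perform. The only difference is that you reconstruct the two cited facts from the definitions \eqref{eq:ConeAbs} and \eqref{eq:ConeLin} rather than quoting them, and those reconstructions (the decomposition $Ax = Av - Aw$ for part 1, and the identity $AC - BD = (A-B)C + B(C-D)$ plus the linear functional $f$ for part 2) are sound.
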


\begin{proof}
The first statement \changed{can be found
in~{\cite{Seidman2005}}}{follows from
\cite[Theorem~2.1]{Seidman2005}}. The second one is also proved
in~\cite{Seidman2005} when $k=1$. Then the general case follows from
the obvious relationship~$A_k \dotsm A_1 \geq^K B_k \dotsm B_1\geq^K
0$.
\end{proof}

\subsection{Lifts and Kronecker Products}

\added{Another notion that is used extensively in this paper is the lift
of real vectors.} Let $p\geq 1$ be an integer and let $x\in
\mathbb{R}^n$. The $p$-lift (see, e.g., \cite{Parrilo2008}) of~$x$,
denoted by~$x^{[p]}$, is defined as the real vector of length~$n_p =
\binom{n+p-1}{p}$ with its elements being the lexicographically
ordered monomials~$\sqrt{\alpha!}\,x^\alpha$ indexed by all the
possible exponents $\alpha = (\alpha_1, \dotsc, \alpha_n) \in \{0, 1,
\dotsc, p\}^n$ such that \mbox{$\alpha_1 + \cdots + \alpha_n = p$},
where $\alpha! := {p!}/({\alpha_1! \dotsm \alpha_n!})$. For $A\in
\mathbb{R}^{n\times n}$ we define the $n_p\times n_p$ matrix $A^{[p]}$
as the unique matrix~\cite{Jungers2009} satisfying~$(Ax)^{[p]} =
A^{[p]} x^{[p]}$ for every $x\in\mathbb{R}^n$. For a subset~$\mathcal
M$ of $\mathbb{R}^{n\times n}$ we define $\mathcal M^{[p]} = \{
M^{[p]} : M\in \mathcal M\}$. Also for real matrices $A$ and $B$, $A
\otimes B$ denotes the Kronecker product~\cite{Brewer1978} of $A$
and~$B$. Define the Kronecker power $A^{\otimes p}$ by $A^{\otimes 1}
:= A$ and $A^{\otimes (p)} = A^{\otimes (p-1)}\otimes A$ recursively
for a general $p$. We define ${\mathcal M}^{\otimes p} := \{
M^{\otimes p}: M\in \mathcal M \}$. It is known that if $AB$ is
defined then
\begin{equation}\label{eq:PowerKron}
(AB)^{\otimes p} = A^{\otimes p} B^{\otimes p}.
\end{equation}

The next lemma collects some properties of $p$-lifts and Kronecker
products proved in \cite{Blondel2005}.

\begin{lemma}[\cite{Blondel2005}]\label{lem:lifts}
Let $\mathcal M \subset \mathbb{R}^{n\times n}$.
\begin{enumerate}
\item 
$\mathcal M^{[2]}$ leaves a proper cone invariant. 

\item If $\mathcal M$ leaves a proper
cone invariant then ${\mathcal M}^{\otimes p}$ also leaves a proper
cone invariant for every $p\geq 1$.
\end{enumerate}
\end{lemma}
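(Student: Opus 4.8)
The plan is to treat the two statements separately, since they rest on different constructions. For the first statement I would identify the $2$-lift with the symmetric matrix square. The map $\iota\colon x\mapsto x^{[2]}$ sends $x$ to the vector whose entries are the $x_i^2$ and the $\sqrt2\,x_ix_j$ ($i<j$), i.e.\ (up to the fixed scaling $\sqrt2$ on the off-diagonal terms) the distinct entries of the rank-one symmetric matrix $xx^\top$. Extending $\iota$ linearly yields a linear isomorphism between the space $\mathcal S^n$ of symmetric $n\times n$ matrices and $\mathbb R^{n_2}$, $n_2=\binom{n+1}{2}$. Because $(Ax)^{[2]}=A^{[2]}x^{[2]}$ while $(Ax)(Ax)^\top = A\,(xx^\top)A^\top$, the matrix $A^{[2]}$ corresponds under $\iota$ to the congruence map $M\mapsto AMA^\top$; indeed the two linear maps agree on the rank-one matrices $xx^\top$, which span $\mathcal S^n$, hence agree everywhere. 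Since congruence preserves positive semidefiniteness, $A^{[2]}$ leaves the cone $\iota(\mathcal S^n_+)$ invariant, where $\mathcal S^n_+$ is the cone of positive semidefinite matrices. The latter is proper (closed, convex, solid because positive definite matrices are interior points, and pointed because $M\succeq0$ together with $-M\succeq0$ forces $M=0$), and properness is preserved by the linear isomorphism $\iota$. As the cone does not depend on the chosen matrix, all of $\mathcal M^{[2]}$ leaves $\iota(\mathcal S^n_+)$ invariant.

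For the second statement, let $K$ be a proper cone left invariant by $\mathcal M$ and let $\widehat K\subset\mathbb R^{n^p}$ be the closed convex cone generated by the pure tensors $x_1\otimes\cdots\otimes x_p$ with $x_1,\dots,x_p\in K$. Invariance is the easy part: applying $(A\otimes B)(x\otimes y)=(Ax)\otimes(By)$ inductively gives $A^{\otimes p}(x_1\otimes\cdots\otimes x_p)=(Ax_1)\otimes\cdots\otimes(Ax_p)$, and each $Ax_i\in K$ since $A$ leaves $K$ invariant; thus $A^{\otimes p}$ maps the generators of $\widehat K$ into $\widehat K$, and by linearity and continuity it maps all of $\widehat K$ into $\widehat K$. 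It then remains to check that $\widehat K$ is proper. Closedness, convexity, and the cone property hold by construction. For solidity, I would use that $K$ solid forces $K$ to contain $n$ linearly independent vectors; their $p$-fold tensor products form a basis of $\mathbb R^{n^p}$ lying in $\widehat K$, so $\widehat K$ spans $\mathbb R^{n^p}$, and a convex cone whose linear span is the whole space has full-dimensional affine hull and hence nonempty interior.

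I expect the pointedness of $\widehat K$ to be the main obstacle, precisely because $\widehat K$ is a closure and pointedness can be destroyed by passing to limits; the fix is to exhibit a linear functional that is \emph{uniformly} positive on $\widehat K\setminus\{0\}$. The plan is to pick $f\in\interior(K^*)$, so that $f^\top x>0$ for every nonzero $x\in K$, and to use compactness of $\{x\in K:\norm{x}=1\}$ to get $\delta>0$ with $f^\top x\geq\delta$ there. Setting $\phi(v):=\langle f^{\otimes p},v\rangle$, a pure tensor gives $\phi(x_1\otimes\cdots\otimes x_p)=\prod_i f^\top x_i$, so homogeneity and the bound yield $\phi(w)\geq\delta^p\norm{w}$ for every pure tensor $w$ built from $K$. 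For a conic combination $v=\sum_j\lambda_j w_j$ with $\lambda_j\geq0$ the triangle inequality gives $\phi(v)=\sum_j\lambda_j\phi(w_j)\geq\delta^p\sum_j\lambda_j\norm{w_j}\geq\delta^p\norm{v}$, and by continuity the estimate $\phi(v)\geq\delta^p\norm{v}$ persists on the closure $\widehat K$. Consequently $\phi>0$ on $\widehat K\setminus\{0\}$, which rules out $v,-v\in\widehat K$ for $v\neq0$, i.e.\ $\widehat K$ is pointed. Since $\widehat K$ is independent of the particular matrix, the whole set $\mathcal M^{\otimes p}$ leaves the proper cone $\widehat K$ invariant, which completes the sketch.
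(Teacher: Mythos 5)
Your proof is correct, but there is nothing in the paper to compare it against: the lemma is stated as a quotation from \cite{Blondel2005}, and the paper supplies no proof of its own. Your argument is, in essence, the standard one underlying that citation, so it fills the gap in a self-contained way. For part 1 you identify $\mathbb{R}^{n_2}$ with the space of symmetric matrices via the scaled-entry isomorphism, under which $A^{[2]}$ acts as the congruence $M\mapsto AMA^\top$, so the image of the positive semidefinite cone serves as the invariant proper cone; for part 2 you take the closed convex cone generated by the pure tensors $x_1\otimes\cdots\otimes x_p$ with $x_i\in K$. The genuinely delicate point is pointedness of the tensor cone, since passing to the closure of a conic hull can in principle destroy pointedness, and your resolution --- the uniform estimate $\langle f^{\otimes p},v\rangle\geq\delta^p\norm{v}$ on the whole cone, obtained from $f\in\interior(K^*)$ and compactness of $\{x\in K:\norm{x}=1\}$ --- is exactly what is needed and is carried out correctly; it also meshes nicely with the paper's later use of functionals in $\interior(K^*)$ to build cone linear absolute norms. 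Two phrasings deserve tightening, though neither is a gap: (i) ``extending $\iota$ linearly'' is an abuse, since $x\mapsto x^{[2]}$ is quadratic, not linear; what you mean (and what your entrywise description actually defines) is the linear map on symmetric matrices sending $M$ to the vector of entries $m_{ii}$ and $\sqrt{2}\,m_{ij}$ ($i<j$), which indeed maps $xx^\top$ to $x^{[2]}$ and makes the spanning argument work; (ii) in the solidity argument, ``their $p$-fold tensor products'' must be read as all $n^p$ mixed products $v_{i_1}\otimes\cdots\otimes v_{i_p}$, not merely the $n$ powers $v_i^{\otimes p}$ --- with that reading the basis claim, and hence the full-dimensionality of the cone, is correct.
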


For a probability distribution~$\mu$ on~$\mathbb{R}^{n\times n}$ we
define the probability distribution $\mu^{\otimes p}$ on
$\mathbb{R}^{n^p \times n^p}$ as the
image~\cite[\added{Section~3.6}]{Bogachev2007}\label{page:def:image} of
$\mu$ under the measurable mapping~$(\cdot)^{\otimes p} \colon
\mathbb{R}^{n\times n} \to \mathbb{R}^{n^p\times n^p}$. Let $f$ be a
measurable function on $\mathbb{R}^{n^p\times n^p}$. If $A$ and $B$
are independent random variables following $\mu$ and $\mu^{\otimes
p}$, respectively, then we can show that
\begin{equation}\label{eq:KronExpct}
E[f(B)] = E[f(A^{\otimes p})]. 
\end{equation}
\added{We also define $\mu^{[p]}$ as the image of $\mu$ under
$(\cdot)^{[p]} \colon \mathbb{R}^{n\times n} \to \mathbb{R}^{n_p\times
n_p}$.}

\section{Joint Spectral Characteristics}\label{sec:JSCs}

This section briefly overviews the notions of joint spectral radius
and $L^p$-norm joint spectral radius. The {joint spectral
radius}~\cite{Jungers2009} of a \changed{compact}{bounded}
set~$\mathcal M \subset \mathbb{R}^{n\times n}$ is defined by
\begin{equation*} 
\hat \rho(\mathcal M) 
:=
\adjustlimits \limsup_{k\to\infty} \sup_{A_1, \dotsc, A_k \in \mathcal M} 
\norm{A_k \dotsm A_1}^{1/k}. 
\end{equation*}
One of the important applications of joint spectral radius is in the
stability theory of switched linear systems~\cite{Lin2009}. Define the
switched linear system~$\Sigma_{\mathcal M}$  by
\begin{equation}\label{eq:SigmaM}
\Sigma_{\mathcal M} : x(k+1) = A_k x(k),\ A_k \in \mathcal M
\end{equation}
where $x(0) = x_0 \in \mathbb{R}^n$ is a constant vector.  We say that
$\Sigma_{\mathcal M}$ is {\it absolutely exponentially
stable}~\cite{Gurvits1995} if there exist $C>0$ and $\gamma \in [0,
1)$ such that $\norm{x(k)}\leq C \gamma^k \norm{x_0}$ for every
$\mathcal{M}$-valued sequence~$\{A_k\}_{k=0}^\infty$ and $x_0$. This
stability is characterized by joint spectral radius as follows (see,
e.g., \cite{Jungers2009}).

\begin{proposition}\label{prop:AAS.char}
$\Sigma_{\mathcal M}$ is absolutely exponentially stable if and only if 
$\hat\rho(\mathcal M) < 1$.
\end{proposition}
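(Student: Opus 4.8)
The plan is to translate the dynamical notion of absolute exponential stability into a purely algebraic statement about the growth of matrix products, and then to match that against the definition of $\hat\rho(\mathcal M)$. First I would observe that the solution of $\Sigma_{\mathcal M}$ started at $x_0$ under the switching sequence $A_0, A_1, \dotsc$ is $x(k) = A_{k-1}\dotsm A_0\, x_0$, a product of exactly $k$ matrices from $\mathcal M$. Taking the supremum over $x_0 \neq 0$ and using that $\norm{\cdot}$ is the operator (maximal singular value) norm, the requirement $\norm{x(k)} \le C\gamma^k\norm{x_0}$ for every $x_0$ is equivalent to $\norm{A_{k-1}\dotsm A_0} \le C\gamma^k$. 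Hence $\Sigma_{\mathcal M}$ is absolutely exponentially stable if and only if there exist $C>0$ and $\gamma\in[0,1)$ with $\bar\rho_k := \sup_{A_1,\dotsc,A_k\in\mathcal M}\norm{A_k\dotsm A_1} \le C\gamma^k$ for all $k\geq 1$.

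Next I would establish that $\hat\rho(\mathcal M) = \lim_{k\to\infty}\bar\rho_k^{1/k} = \inf_{k\ge1}\bar\rho_k^{1/k}$. Since $\mathcal M$ is bounded, $\bar\rho_1 = \sup_{A\in\mathcal M}\norm{A} < \infty$, and submultiplicativity of the operator norm gives $\bar\rho_{k+l} \le \bar\rho_k\,\bar\rho_l$ for all $k,l$, so every $\bar\rho_k$ is finite. Applying Fekete's subadditive lemma to the sequence $\log\bar\rho_k$ then shows that $\bar\rho_k^{1/k}$ converges to its infimum; in particular the $\limsup$ in the definition of $\hat\rho(\mathcal M)$ is genuinely a limit.

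With this in hand both directions are short. For necessity, if $\Sigma_{\mathcal M}$ is absolutely exponentially stable then $\bar\rho_k \le C\gamma^k$, so $\bar\rho_k^{1/k} \le C^{1/k}\gamma$, and letting $k\to\infty$ yields $\hat\rho(\mathcal M) \le \gamma < 1$. For sufficiency, if $\hat\rho(\mathcal M) < 1$ I would fix $\gamma$ with $\hat\rho(\mathcal M) < \gamma < 1$; since $\bar\rho_k^{1/k} \to \hat\rho(\mathcal M)$ there is $K$ with $\bar\rho_k \le \gamma^k$ for all $k\ge K$, and choosing $C := \max\{1,\ \max_{1\le k < K}\bar\rho_k\,\gamma^{-k}\}$ gives $\bar\rho_k \le C\gamma^k$ for every $k$, which is exactly the algebraic characterization of absolute exponential stability obtained in the first step.

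I expect the only real content to be the passage from the $\limsup$ definition to a uniform exponential bound, that is, the Fekete argument together with the finiteness guaranteed by boundedness of $\mathcal M$; the equivalence between the state bound and the operator-norm bound, and the two limiting estimates, are routine.
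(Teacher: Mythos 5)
Your proof is correct, but note that the paper itself never proves this proposition: it states it as a known result with a pointer to Jungers' monograph, and the argument given there is essentially the one you wrote — stability is equivalent to a uniform bound $\sup_{A_1,\dotsc,A_k\in\mathcal M}\norm{A_k\dotsm A_1}\le C\gamma^k$, which in turn is equivalent to $\hat\rho(\mathcal M)<1$. So you have supplied the standard proof that the paper delegates to a citation.

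Two small refinements. First, the Fekete step is superfluous for this equivalence: since $\hat\rho(\mathcal M)$ is defined as a $\limsup$, the hypothesis $\hat\rho(\mathcal M)<\gamma$ already yields $\bar\rho_k\le\gamma^k$ for all sufficiently large $k$, and your constant $C:=\max\{1,\max_{1\le k<K}\bar\rho_k\gamma^{-k}\}$ handles the finitely many remaining indices; all that boundedness of $\mathcal M$ and submultiplicativity must supply is the finiteness of each $\bar\rho_k$, not the existence of the limit. Second, if you do want to keep the Fekete argument (it is what makes the $\limsup$ a genuine limit, a fact the paper also uses implicitly elsewhere), be aware of the degenerate case $\bar\rho_k=0$ for some $k$: then $\log\bar\rho_k=-\infty$, so you need the extended-real form of the subadditive lemma, or you can dispose of this case separately by noting that $\bar\rho_j=0$ for all $j\ge k$, whence both sides of the equivalence hold trivially.
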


The following lemma lists some other properties of joint spectral
radius. To state the lemma we recall that the set~$\mathfrak
K(\mathbb{R}^{n\times n})$ of compact and nonempty subsets
of~$\mathbb{R}^{n\times n}$ becomes a complete metric
space~\cite{Wirth2002} if it is endowed with the Hausdorff metric
given by
\begin{equation}\label{eq:Hmetric}
H(\mathcal M, \mathcal N) :=
\max\Bigl\{
\max_{A\in\mathcal M} d(A, \mathcal N), \max_{B\in\mathcal N}d(B, \mathcal{M})
\Bigr\}. 
\end{equation}

\begin{lemma}\label{lem:basic.JSR} The following statements are true.
\begin{enumerate}
\item The restriction of the mapping~$\hat \rho$ to the metric
space~$\mathfrak K(\mathbb{R}^{n\times n})$ 
is continuous~\cite[\added{Lemma~3.5}]{Wirth2002}.

\item It holds \cite[\added{Proposition~2.5}]{Jungers2009} that, for any
$p\geq 1$,
\begin{equation}\label{eq:p-rad[m]}
\hat \rho(\mathcal M^{[p]}) = \hat \rho(\mathcal M)^p.
\end{equation}
\end{enumerate}
\end{lemma}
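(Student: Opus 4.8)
The plan is to treat the two statements separately, each being available in the cited literature but admitting a short self-contained argument. For the second statement I would argue directly from the defining property of the $p$-lift, and for the first I would split continuity into upper and lower semicontinuity, the latter being the genuinely hard part.

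For \eqref{eq:p-rad[m]}, first note that the $p$-lift is multiplicative: both $(AB)^{[p]}$ and $A^{[p]}B^{[p]}$ send $x^{[p]}$ to $(ABx)^{[p]}$ for every $x$, so the uniqueness in the definition of the lift gives $(AB)^{[p]} = A^{[p]} B^{[p]}$, hence $(A_k\dotsm A_1)^{[p]} = A_k^{[p]}\dotsm A_1^{[p]}$. Next I would compare $\norm{B^{[p]}}$ with $\norm{B}^p$. The weighting $\sqrt{\alpha!}$ in the definition of the lift is chosen precisely so that, by the multinomial theorem, $\norm{x^{[p]}} = \norm{x}^p$ for the Euclidean norm; consequently $\norm{B^{[p]} x^{[p]}}/\norm{x^{[p]}} = (\norm{Bx}/\norm{x})^p$, and taking the supremum over $x\neq 0$ yields $\norm{B}^p \le \norm{B^{[p]}}$. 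For the reverse inequality I would use that $B\mapsto B^{[p]}$ is continuous and homogeneous of degree $p$, so $\norm{B^{[p]}}\le c\,\norm{B}^p$ for a constant $c$ independent of $B$. Applying these bounds to $B = A_k\dotsm A_1$ gives $\norm{A_k\dotsm A_1}^{p/k}\le \norm{(A_k\dotsm A_1)^{[p]}}^{1/k}\le c^{1/k}\norm{A_k\dotsm A_1}^{p/k}$; taking the supremum over admissible products and the $\limsup$ in $k$, and using $c^{1/k}\to 1$, establishes $\hat\rho(\mathcal M^{[p]}) = \hat\rho(\mathcal M)^p$.

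For the first statement, write $\rho_k(\mathcal M) := \sup_{A_1,\dotsc,A_k\in\mathcal M}\norm{A_k\dotsm A_1}^{1/k}$ and $\bar\rho_k(\mathcal M) := \sup_{A_1,\dotsc,A_k\in\mathcal M}\rho(A_k\dotsm A_1)^{1/k}$. Submultiplicativity of $\sup_{A_i}\norm{A_k\dotsm A_1}$ together with Fekete's lemma gives $\hat\rho(\mathcal M) = \inf_k \rho_k(\mathcal M)$, while the Berger--Wang theorem gives $\hat\rho(\mathcal M) = \sup_k\bar\rho_k(\mathcal M)$. A compactness argument shows that each $\rho_k$ is upper semicontinuous with respect to the Hausdorff metric (optimizers over a converging sequence of sets subconverge to admissible products over the limit set), so the infimum $\hat\rho$ is upper semicontinuous; dually, each $\bar\rho_k$ is lower semicontinuous (near-optimal products over $\mathcal M$ are approximated by products over nearby sets, using continuity of $\rho$), so the supremum $\hat\rho$ is lower semicontinuous. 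The two together give continuity.

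The main obstacle is the lower semicontinuity: the infimum representation alone yields only upper semicontinuity, and the matching lower bound forces the use of the supremum representation $\hat\rho = \sup_k\bar\rho_k$, i.e.\ the nontrivial identity between the joint and the generalized spectral radii. The argument for \eqref{eq:p-rad[m]} is comparatively routine, the only delicate point being the one-sided norm comparison $\norm{B}^p\le\norm{B^{[p]}}\le c\,\norm{B}^p$, which is where the specific normalization of the lift enters.
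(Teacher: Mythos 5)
The paper offers no internal proof of this lemma: both items are simply quoted from the literature (continuity from Wirth, Lemma~3.5; the lift identity from Jungers, Proposition~2.5), so there is nothing to compare against line by line. Your reconstruction is correct, and it is in fact the standard argument underlying those citations rather than a new route. For the identity $\hat\rho(\mathcal M^{[p]})=\hat\rho(\mathcal M)^p$, multiplicativity of the lift plus the two-sided comparison $\norm{B}^p\le\norm{B^{[p]}}\le c\,\norm{B}^p$ (with $c^{1/k}\to 1$ absorbed in the limit) is exactly what is needed; two small remarks: the uniqueness you invoke to get $(AB)^{[p]}=A^{[p]}B^{[p]}$ tacitly uses that the lifted vectors $x^{[p]}$ span $\mathbb{R}^{n_p}$ (this is implicit in the paper's definition of $A^{[p]}$ as \emph{the unique} matrix with $(Ax)^{[p]}=A^{[p]}x^{[p]}$), and with the $\sqrt{\alpha!}$ normalization one actually has the exact equality $\norm{B^{[p]}}=\norm{B}^p$ for the spectral norm, though your cruder one-sided bounds suffice. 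For continuity, your decomposition into upper semicontinuity (from $\hat\rho=\inf_k\rho_k$ via Fekete) and lower semicontinuity (from $\hat\rho=\sup_k\bar\rho_k$ via Berger--Wang) is precisely the classical Heil--Strang/Wirth proof, and you correctly identify lower semicontinuity as the hard direction: it cannot be obtained from the norm-product representation alone and genuinely requires the Berger--Wang identity (note that Berger--Wang gives $\limsup_k\bar\rho_k=\hat\rho$, which combined with the elementary bound $\bar\rho_k\le\hat\rho$ yields the $\sup_k$ form you use). The only caveat is that your proof is self-contained exactly up to Berger--Wang, which you import as a black box; since the paper itself imports the whole lemma as a black box, this is not a defect but a deliberate trade-off.
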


Then we turn to the $L^p$-norm joint spectral radius of probability
distributions introduced in~\cite{Ogura2012b}. Let $\mu$ be a
probability distribution on~$\mathbb{R}^{n\times n}$ and let \delled{$A$
and}\delwsed$A_k$ ($k= 1, 2, \dotsc$) be random variables independently
following $\mu$. Also let $p$ be a positive integer. The {\it
$L^p$-norm joint spectral radius} ({\it $p$-radius} for short) of
$\mu$ is defined by
\begin{equation} \label{eq:defn:p-rad}
\rho_{p, \mu}
:=
\lim_{k\to\infty} \left(E[\norm{A_k \dotsm A_1}^p ]\right)^{1/pk}. 
\end{equation}
This definition extends the $L^p$-norm joint spectral radius of a set
of finitely many matrices shown in \eqref{eq:classicGJSR}. One can
check that if $\supp \mu$ is \changed{compact}{bounded} then $\rho_{p,
\mu}$ exists and is finite~\cite{Ogura2012b}. Thus, without being
explicitly stated, we assume that probability distributions appearing
in this paper have a bounded support. Though in general the
computation of $p$-radius is a difficult problem~\cite{Jungers2011b},
the next simple formula for $p$-radius is available under certain
assumptions.

\begin{proposition}[{\cite[\added{Theorem~2.5}]{Ogura2012b}}]\label{prop:p-rad} 
Assume that one of the following conditions is true:
\begin{enumerate}[leftmargin=25pt,label={\upshape A$_{\arabic*}$}.,ref=A$_{\arabic*}$]
\item $p$ is even; \label{item:p.even}
\item $\supp \mu$ leaves a proper cone invariant. \label{item:invariant}
\end{enumerate}
Then 
\delxed{\begin{equation} \label{eq:p-radformula}
\stmathed{\rho_{p, \mu} = \rho(E[A^{\otimes p}])^{1/p},}
\end{equation}}
\added{\begin{equation*}
\rho_{p, \mu} = \rho(E[A^{\otimes p}])^{1/p}, 
\end{equation*}}
\added{where $A$ is a random variable following $\mu$.}
\end{proposition}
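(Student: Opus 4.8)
The plan is to turn the limit in \eqref{eq:defn:p-rad} into the computation of a single spectral radius by passing to Kronecker powers, and then to split the argument into an unconditional lower bound and a harder upper bound, the latter being exactly where the hypotheses \ref{item:p.even}/\ref{item:invariant} are used. First I would take $\norm{\cdot}$ to be the spectral norm and set $B_i := A_i^{\otimes p}$. Since the singular values of $M^{\otimes p}$ are the $p$-fold products of those of $M$, one has $\norm{M^{\otimes p}} = \norm{M}^p$, and combined with \eqref{eq:PowerKron} this gives $\norm{A_k \dotsm A_1}^p = \norm{B_k \dotsm B_1}$. Hence $E[\norm{A_k \dotsm A_1}^p] = E[\norm{B_k \dotsm B_1}]$, where $B_1, B_2, \dotsc$ are independent with common mean $\bar B := E[A^{\otimes p}]$ by \eqref{eq:KronExpct}. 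Thus $\rho_{p,\mu} = r^{1/p}$ with $r := \lim_{k\to\infty}(E[\norm{B_k \dotsm B_1}])^{1/k}$, and the entire problem reduces to showing $r = \rho(\bar B)$.

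The lower bound $r \ge \rho(\bar B)$ needs no hypothesis: by convexity of the norm (Jensen) and independence, $E[\norm{B_k \dotsm B_1}] \ge \norm{E[B_k \dotsm B_1]} = \norm{\bar B^k}$, so that $r \ge \lim_{k\to\infty}\norm{\bar B^k}^{1/k} = \rho(\bar B)$ by Gelfand's formula. The content lies in the reverse inequality $r \le \rho(\bar B)$, and this is where a proper invariant cone enters. Under \ref{item:invariant}, Lemma~\ref{lem:lifts} directly supplies a proper cone $\hat K \subset \mathbb{R}^{n^p}$ left invariant by every $B$. Under \ref{item:p.even} no such cone exists on all of $\mathbb{R}^{n^p}$ (for instance $A \otimes A$ with $A$ a rotation admits no invariant proper cone), so I would instead restrict to the fully symmetric subspace $S$: each $B = A^{\otimes p}$ preserves $S$, the closed conical hull $\hat K$ of $\{\, x^{[p]} : x \in \mathbb{R}^n \,\}$ is a proper cone in $S$ and is $B$-invariant, and the operator norm is attained on $S$ because the leading singular vector of $(A_k \dotsm A_1)^{\otimes p}$ is the symmetric tensor $v^{\otimes p}$. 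Consequently $\norm{B_k \dotsm B_1}$ equals the norm of the restriction of $B_k \dotsm B_1$ to $S$ in this case as well.

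With the cone in hand I would fix $f \in \interior(\hat K^*)$ and use the cone linear absolute norm $\norm{\cdot}_f$ (the norm-independence of the limit lets me switch to it), exploiting that $\norm{y}_f = f^\top y$ is \emph{linear} on $\hat K$. Dominating the compact base of $\hat K$ by a suitable interior point $u$ (so that $cu - x \in \hat K$ for every base element $x$) and using the identity \eqref{eq:norm_f} of Lemma~\ref{lem:SeidmanInducednorm} together with monotonicity on the cone, I obtain $\norm{B_k \dotsm B_1}_f \le c\,\norm{B_k \dotsm B_1\, u}_f = c\, f^\top B_k \dotsm B_1\, u$ pointwise. Taking expectations and using independence collapses the right-hand side to $c\, f^\top \bar B^k u \le c'\norm{\bar B^k}$, whence $r \le \rho(\bar B)$. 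Combining the two bounds gives $r = \rho(\bar B)$ and therefore $\rho_{p,\mu} = \rho(E[A^{\otimes p}])^{1/p}$, as claimed.

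The hard part will be the even case of the upper bound: producing a genuinely \emph{proper} invariant cone via the symmetric-subspace/moment-cone reduction, and verifying that the spectral norm of the product is governed by the action on that subspace. This is precisely the step that fails when neither \ref{item:p.even} nor \ref{item:invariant} holds, which is what forces the hypotheses; the lower bound and the cone linear absolute norm computation, by contrast, are essentially bookkeeping once the cone is available.
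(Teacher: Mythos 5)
You should first note that the paper does not prove this proposition at all: it is imported verbatim from \cite[Theorem~2.5]{Ogura2012b}, so there is no internal proof to compare your argument against, and I can only judge it on its own merits. On those terms your reconstruction is essentially correct, and it runs on exactly the machinery this paper deploys elsewhere: the Kronecker reduction $\norm{A_k\dotsm A_1}^p=\norm{A_k^{\otimes p}\dotsm A_1^{\otimes p}}$ via \eqref{eq:PowerKron} and \eqref{eq:KronExpct}, the unconditional Jensen lower bound (which is precisely \eqref{eq:p-rad.geq}), and, for the upper bound, an invariant proper cone --- supplied by Lemma~\ref{lem:lifts} under \ref{item:invariant}, and by the closed convex conical hull of $\{x^{\otimes p}:x\in\mathbb{R}^n\}$ inside the symmetric subspace $S$ under \ref{item:p.even} --- combined with a cone linear absolute norm, the identity \eqref{eq:norm_f}, domination of the compact base $\{x\in\hat K: f^\top x=1\}$ by an interior point, and the norm-independence of Remark~\ref{rmk:normequiv}. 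The pointwise estimate $\norm{B_k\dotsm B_1}_f\leq c\,f^\top B_k\dotsm B_1u$ followed by expectation, independence, and Gelfand's formula does close the upper bound; in the even case the restriction only yields $r\leq\rho\bigl(E[A^{\otimes p}]\vert_S\bigr)$, but since $S$ is an invariant subspace this is at most $\rho(E[A^{\otimes p}])$, and the two bounds squeeze.

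Two repairs are needed, neither fatal. First, your motivating counterexample is false: $A\otimes A$ with $A$ a rotation \emph{does} admit an invariant proper cone. Writing $X=X_s+X_a$ (symmetric plus antisymmetric parts), the set $\{X: X_s\succeq 0,\ \norm{X_a}\leq\lambda_{\min}(X_s)\}$ is a proper cone invariant under $X\mapsto AXA^\top$ for every orthogonal $A$; alternatively, Vandergraft's criterion \cite{Vandergraft1968} applies since all eigenvalues of $A\otimes A$ lie on the unit circle, $1$ is among them, and the Jordan structure is trivial. The honest motivation is only that \ref{item:p.even} alone gives no \emph{common} invariant proper cone for $(\supp\mu)^{\otimes p}$ on all of $\mathbb{R}^{n^p}$, whereas the even-power cone in $S$ is invariant under $A^{\otimes p}$ for \emph{every} $A$, with no hypothesis on $\mu$. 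Second, the properties you defer as ``the hard part'' are true but deserve their one-line proofs: pointedness of $\hat K$ holds because, for $p$ even, $\langle y,w^{\otimes p}\rangle\geq 0$ for all $y\in\hat K$ and all $w$, while the powers $w^{\otimes p}$ span $S$, so $y\in\hat K\cap(-\hat K)$ forces $y=0$; solidity follows from the same spanning fact; and $\norm{B_k\dotsm B_1}=\norm{(B_k\dotsm B_1)\vert_S}$ holds because the top right singular vector of $(A_k\dotsm A_1)^{\otimes p}$ is $v^{\otimes p}\in S$, where $v$ is the top right singular vector of $A_k\dotsm A_1$. (Also, the generators of $\hat K$ should be the powers $x^{\otimes p}$, not the lifts $x^{[p]}$, which live in $\mathbb{R}^{n_p}$ rather than in $S\subset\mathbb{R}^{n^p}$; the two pictures are isomorphic, but you should pick one.)
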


\changed{The next lemma collects some other properties of
$p$-radius.}{We also quote from \cite{Ogura2012b} other properties of
$p$-radius that will be used in this paper.}

\begin{lemma}\delxed{{\cite{Ogura2012b}}}\label{lem:basic.p-rad}
Let $p\geq 1$ be arbitrary. 
\begin{enumerate}
\item If $p \leq q$ then $\rho_{p, \mu} \leq \rho_{q, \mu}$.

\item \changed{For any probability distribution~$\mu$,}{It holds that}
\begin{equation}\label{eq:p-rad.geq}
\rho_{p, \mu} \geq \rho(E[A^{\otimes p}])^{1/p}. 
\end{equation}

\item For every $m\geq 1$, 
\begin{equation} \label{eq:p-rad.lift}
\rho_{p, \mu^{\otimes m}} = \rho_{p, \mu^{[m]}} = \rho_{mp, \mu}^m.
\end{equation}

\end{enumerate}
\end{lemma}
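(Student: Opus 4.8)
The plan is to prove the three statements in turn; each reduces to a short combination of the definition \eqref{eq:defn:p-rad} with elementary properties of the spectral norm. For statement (1) I would work at the finite horizon before passing to the limit. Fix $k$ and set $M = A_k \dotsm A_1$. Since $p\leq q$, Lyapunov's inequality for the nonnegative random variable $\norm{M}$ gives $E[\norm{M}^p]^{1/p} \leq E[\norm{M}^q]^{1/q}$, hence $E[\norm{M}^p]^{1/(pk)} \leq E[\norm{M}^q]^{1/(qk)}$; letting $k\to\infty$ and invoking \eqref{eq:defn:p-rad} yields $\rho_{p,\mu}\leq\rho_{q,\mu}$.

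For statement (2) the idea is to pass to the Kronecker power and then trade a norm for a spectral radius. Because the spectral norm is multiplicative over Kronecker products, $\norm{M}^p = \norm{M^{\otimes p}} = \norm{A_k^{\otimes p}\dotsm A_1^{\otimes p}}$ by \eqref{eq:PowerKron}. Taking expectations and using, in order, convexity of the norm (Jensen), the bound $\rho(\cdot)\leq\norm{\cdot}$, and independence of the $A_i$, I obtain
\begin{equation*}
E[\norm{A_k\dotsm A_1}^p] = E[\norm{A_k^{\otimes p}\dotsm A_1^{\otimes p}}] \geq \norm{E[A_k^{\otimes p}\dotsm A_1^{\otimes p}]} = \norm{(E[A^{\otimes p}])^k} \geq \rho(E[A^{\otimes p}])^k.
\end{equation*}
Raising to the power $1/(pk)$ and letting $k\to\infty$ gives \eqref{eq:p-rad.geq}.

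For statement (3) I would show that along any product the three relevant norms coincide, thereby reducing both quantities to $\rho_{mp,\mu}^m$. Writing $M = A_k\dotsm A_1$, multiplicativity of the spectral norm over Kronecker products gives $\norm{M^{\otimes m}} = \norm{M}^m$, and the $m$-lift satisfies $\norm{M^{[m]}} = \norm{M}^m$ as well: under the normalization $\sqrt{\alpha!}$ the map $x^{[m]}\mapsto x^{\otimes m}$ is an isometry, so $M^{[m]}$ is the restriction of $M^{\otimes m}$ to the symmetric subspace $\mathrm{Sym}^m(\mathbb{R}^n)$, which is invariant under $(M^\top M)^{\otimes m}$ and contains its top eigenvector $v^{\otimes m}$, leaving the largest singular value unchanged. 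Combining this with the multiplicativity of both maps over products, $(A_k\dotsm A_1)^{[m]} = A_k^{[m]}\dotsm A_1^{[m]}$ and likewise for $(\cdot)^{\otimes m}$ via \eqref{eq:PowerKron}, together with the fact that $\mu^{[m]}$ and $\mu^{\otimes m}$ are the images of $\mu$ under these maps so that expectations transport as in \eqref{eq:KronExpct}, I get
\begin{equation*}
E[\norm{A_k^{[m]}\dotsm A_1^{[m]}}^p] = E[\norm{A_k^{\otimes m}\dotsm A_1^{\otimes m}}^p] = E[\norm{A_k\dotsm A_1}^{mp}].
\end{equation*}
Taking $1/(pk)$ powers and $k\to\infty$ identifies all three limits, giving $\rho_{p,\mu^{\otimes m}} = \rho_{p,\mu^{[m]}} = \rho_{mp,\mu}^m$.

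The routine parts are (1) and the inequality chain in (2). I expect the main obstacle to be the norm identity $\norm{M^{[m]}} = \norm{M}^m$ for the lift in (3): unlike the Kronecker power, $M^{[m]}$ acts on the smaller space $\mathbb{R}^{n_m}$ on which not every vector is itself a lift, so the operator norm cannot be read off directly. The clean resolution is the symmetric-subspace identification above, which shows the largest singular value is attained on the rank-one symmetric tensor $v^{\otimes m}$ and therefore equals $\norm{M}^m$; alternatively one may simply invoke the corresponding norm-preservation properties already established in \cite{Ogura2012b,Blondel2005}.
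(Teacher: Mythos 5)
Your proof is correct and follows the route the paper intends: the paper gives no self-contained argument here, citing \cite{Ogura2012b} for parts (1)--(2) and noting that part (3) ``can be proved in the same way as'' the lift identity \eqref{eq:p-rad[m]}, and that standard proof is precisely your combination of the multiplicativity of $(\cdot)^{[m]}$ and $(\cdot)^{\otimes m}$ with the norm identities $\norm{M^{[m]}} = \norm{M^{\otimes m}} = \norm{M}^m$ (your symmetric-subspace argument for the lift is a sound way to establish the nontrivial one). Likewise, your Lyapunov-inequality argument for (1) and the Jensen-plus-independence chain for (2) are the standard proofs behind the citation, so your proposal simply supplies the details the paper delegates to references.
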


\begin{proof}
The first two statements can be found in~\cite{Ogura2012b}. The last
statement can be proved in the same way as \eqref{eq:p-rad[m]}.
\end{proof}

\begin{remark}\label{rmk:normequiv}
By the equivalence of the norms on a finite dimensional vector space,
the value of $L^p$-norm joint spectral radius is independent of the
norm used in~\eqref{eq:defn:p-rad}.
\end{remark}

\section{Limit Formula for Joint Spectral Radius\label{sec:JSRchar}}

This section presents a novel limit formula for joint spectral radius.
We state the next assumption on a probability distribution~$\mu$
on~$\mathbb{R}^{n\times n}$.
\begin{enumerate}[leftmargin=25pt,label={\upshape A$_{\arabic*}$}.,ref=A$_{\arabic*}$]\setcounter{enumi}{2}
\item \label{item:mu.nice} The singular part~$\mu_s$ of $\mu$ is a
linear combination of finitely many Dirac measures, i.e., either
$\mu_s = 0$ or there exist positive numbers~$p_1$, $\dotsc$, $p_N$ and
matrices~$M_1$, $\dotsc$, $M_N$ such that
\begin{equation}\label{eq:mu_s}
\mu_s =  p_1 \delta_{M_1} + \cdots + p_N \delta_{M_N}. 
\end{equation}
\end{enumerate}
Notice that any of the assumptions from \ref{item:p.even} to
\ref{item:mu.nice} does not require \changed{$\mu$ to have a finite
support}{$\supp \mu$ to consist of only finitely many
matrices}. The next theorem is the main result of
this paper\delled{, which extends the characterization of the joint
spectral radius of finite sets of matrices~{\cite{Blondel2005}} to
general compact sets}.
\begin{theorem}\label{thm:main:equiv}
\added{Let $\mu$ be a probability distribution satisfying
\ref{item:invariant} and \ref{item:mu.nice} and let $\mathcal M =
\supp \mu$. Then
\begin{equation*} 
\hat \rho(\mathcal M)
=
\lim_{p\to\infty} \rho_{p, \mu}. 
\end{equation*}}
\end{theorem}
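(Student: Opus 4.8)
The plan is to establish the two inequalities separately, noting first that the limit exists: by Lemma~\ref{lem:basic.p-rad} the map $p\mapsto\rho_{p,\mu}$ is nondecreasing, and it is bounded above by the easy inequality below. Throughout I would fix a proper cone $K$ left invariant by $\mathcal M$ (available by~\ref{item:invariant}), choose $f\in\interior(K^*)$, and compute with the cone linear absolute norm $\norm{\cdot}_f$; this is harmless since $\rho_{p,\mu}$ is norm-independent (Remark~\ref{rmk:normequiv}), and $\mathcal M=\supp\mu$ is compact. The easy direction $\lim_{p\to\infty}\rho_{p,\mu}\leq\hat\rho(\mathcal M)$ follows because $A_i\in\supp\mu=\mathcal M$ almost surely gives $\norm{A_k\cdots A_1}_f\leq\bar\rho_k:=\sup_{B_1,\dots,B_k\in\mathcal M}\norm{B_k\cdots B_1}_f$, so $E[\norm{A_k\cdots A_1}_f^p]^{1/pk}\leq\bar\rho_k^{1/k}$, and letting $k\to\infty$ yields $\rho_{p,\mu}\leq\hat\rho(\mathcal M)$ for every $p$.

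For the reverse inequality the central tool is the order-monotonicity of products, Lemma~\ref{lem:SeidmanInducednorm}(2): if each sampled $A_i$ dominates a fixed matrix from below, $A_i\geq^K B_{j_i}\geq^K 0$, then $\norm{A_k\cdots A_1}_f\geq\norm{B_{j_k}\cdots B_{j_1}}_f$. The point of this one-sided estimate, as opposed to a two-sided continuity bound, is that it \emph{composes} over products of every length, so no error accumulates as $k\to\infty$. The plan is to partition $\mathcal M$ into finitely many $\mu$-positive cells $C_1,\dots,C_L$, to attach to each a matrix $B_j\in\pi(K)$ with $B_j\leq^K A$ for all $A\in C_j$, and to integrate the monotonicity bound cell by cell. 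With $w_j=\mu(C_j)$ and $\bar\rho_k(\mathcal B)$ the length-$k$ product maximum over $\mathcal B=\{B_1,\dots,B_L\}$, independence yields
\[
E[\norm{A_k\cdots A_1}_f^p]\geq\sum_{j_1,\dots,j_k}\Bigl(\prod_{i=1}^k w_{j_i}\Bigr)\norm{B_{j_k}\cdots B_{j_1}}_f^p\geq(\min_j w_j)^k\,\bar\rho_k(\mathcal B)^p,
\]
so that $\rho_{p,\mu}\geq(\min_j w_j)^{1/p}\hat\rho(\mathcal B)$ and therefore $\lim_{p\to\infty}\rho_{p,\mu}\geq\hat\rho(\mathcal B)$.

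Constructing the cells and the matrices $B_j$ is where Assumptions~\ref{item:invariant}--\ref{item:mu.nice} and Lemma~\ref{lem:nullset} enter. Fixing some $P\in\interior\pi$ (nonempty by~\eqref{eq:intpi(K)}), for an absolutely continuous cell $C$ of small diameter with an interior reference point $A_0\in C\cap\interior\pi$ I would set $B=A_0-\delta P$: for $\delta$ small this lies in $\pi(K)$, while $A-B=(A-A_0)+\delta P\in\pi(K)$ for every $A\in C$ as soon as the diameter of $C$ is below the distance from $\delta P$ to $\partial\pi$, giving $B\leq^K A$ on $C$. Interior reference points are available because the absolutely continuous part of $\mu$ charges no mass on $\partial\pi$ by Lemma~\ref{lem:nullset}; the finitely many atoms $M_1,\dots,M_N$ supplied by~\ref{item:mu.nice} are instead taken as singleton cells with $B=M_i\in\mathcal M\subset\pi(K)$, for which domination is automatic. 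Taking the continuous cells of diameter at most $r$ makes $\mathcal B$ lie within Hausdorff distance $r+\delta\norm{P}_f$ of $\mathcal M$, so continuity of the joint spectral radius on compacta (Lemma~\ref{lem:basic.JSR}) gives $\hat\rho(\mathcal B)\to\hat\rho(\mathcal M)$ as $r,\delta\to0$. Combined with the previous paragraph this yields $\lim_{p\to\infty}\rho_{p,\mu}\geq\hat\rho(\mathcal M)$.

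The hard part will be exactly this construction: producing, uniformly over a cell of positive measure, a single matrix that is dominated from below in the cone order and still lies in $\pi(K)$. The obstruction is the boundary $\partial\pi$, where subtracting $\delta P$ pushes a matrix out of $\pi(K)$; this is precisely why Assumption~\ref{item:mu.nice} is needed (so that the potentially troublesome boundary atoms are finite in number and can serve as their own cells) together with Lemma~\ref{lem:nullset} (so that the continuous part lives in $\interior\pi$ up to a null set). Everything else—the upper bound, the order-monotone descent to a finite configuration, and the final continuity limit—I expect to be routine.
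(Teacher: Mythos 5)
Your overall architecture is sound, and both halves correspond to real ingredients of the paper's proof: the easy inequality $\lim_{p\to\infty}\rho_{p,\mu}\leq\hat\rho(\mathcal M)$ is obtained the same way, and your ``integrate the cone-monotonicity bound over positive-measure cells'' step is exactly the mechanism of the paper's Proposition~\ref{prop:frakM}, where the role of your $(\min_j w_j)^k$ is played by the $\delta^k$ coming from the uniform bound \eqref{eq:muS}. The gap is in the step you yourself flag as hard, and your proposed resolution of it does not work. Your construction imposes two requirements on $\delta$: first, $\delta\norm{P}<d(A_0,\partial\pi)$, so that $B=A_0-\delta P$ stays in $\pi$; second, $\mathrm{diam}(C)\leq r<d(\delta P,\partial\pi)=\delta\,d(P,\partial\pi)$ (by homogeneity of the cone), so that $B\leq^K A$ on all of $C$. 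These are compatible only if the cell contains a reference point with $d(A_0,\partial\pi)>r\norm{P}/d(P,\partial\pi)$. Lemma~\ref{lem:nullset} gives you only $C\cap\interior\pi\neq\emptyset$; it gives no quantitative lower bound on $d(A_0,\partial\pi)$, and none is available: $\supp\mu_c$ may touch $\partial\pi$ (in the paper's own Example~\ref{ex:limit}, $0\in\supp\mu_c\cap\partial\pi$), so positive-measure cells lying entirely within distance much smaller than $r$ of $\partial\pi$ do occur, and Assumption~\ref{item:mu.nice} cannot help since it constrains only the singular part. Moreover this cannot be fixed by a cleverer choice of $B$: for a general proper cone --- already for a circular (Lorentz-type) cone --- two points on distinct boundary rays have $0$ as their only common lower bound within the cone, so a cell hugging $\partial\pi$ may admit no cone lower bound anywhere near it, and your argument uses no property of $\pi(K)$ that would exclude such geometry. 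Consequently the claimed bound $H(\mathcal B,\mathcal M)\leq r+\delta\norm{P}_f$, on which the appeal to Lemma~\ref{lem:basic.JSR} rests, is not merely unproved but false in general.

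The repair is essentially what the paper does: never attempt to dominate matrices near $\partial\pi$. The paper works with $\mathcal M_r=(\pi_r\cap\supp\mu_c)\cup\supp\mu_s$, i.e., it discards the boundary-hugging part of the support; on $\mathcal M_r$ it uses the multiplicative lower bound $(1-\epsilon)M$, which lies in $\pi$ automatically for every $M\in\pi$, together with a uniform-in-$M$ positivity bound (the class $\mathfrak M$ and Proposition~\ref{prop:frakM}); and it then proves, as a separate and genuinely nontrivial step, the limit \eqref{eq:hatrholim}, $\hat\rho(\mathcal M_r)\to\hat\rho(\mathcal M)$, by showing $\lim_{r\to 0}d(A,\mathcal M_r)=0$ for every $A\in\mathcal M$ using the defining property of supports together with Lemma~\ref{lem:nullset}. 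Your discretization can be salvaged along the same lines: tile only $\pi_{r'}\cap\supp\mu_c$ (so every reference point satisfies $d(A_0,\partial\pi)\geq r'$, and $\delta$, then $r$, can be chosen consistently), keep the atoms as singleton cells, and conclude $\lim_{p\to\infty}\rho_{p,\mu}\geq\hat\rho(\mathcal M_{r'})$ for each $r'>0$; but you then still owe precisely the Hausdorff limit \eqref{eq:hatrholim}, which your proposal implicitly asserts (with $\mathcal M$ in place of $\mathcal M_{r'}$) and which is the part of the paper's proof that cannot be waved through.
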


\added{Proposition~\ref{prop:p-rad} allows us to state the theorem in
the following equivalent form, which extends the limit formula of the
joint spectral radius of a set of finitely many matrices given in
\cite{Blondel2005}.}

\begin{theorem}\delxed{\label{thm:Limit}}
Let $\mu$ be a probability distribution satisfying
\ref{item:invariant} and \ref{item:mu.nice} and let $\mathcal M =
\supp \mu$. Then
\begin{equation*} 
\hat \rho(\mathcal M)
=
\lim_{p\to\infty}\rho\left( E[A^{\otimes p}] \right)^{1/p}. 
\end{equation*}
\end{theorem}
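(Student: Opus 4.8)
The plan is to prove the equivalent $\rho_{p,\mu}$-form of Theorem~\ref{thm:main:equiv} and then invoke Proposition~\ref{prop:p-rad}: since $\mu$ satisfies~\ref{item:invariant}, that proposition gives $\rho(E[A^{\otimes p}])^{1/p} = \rho_{p,\mu}$ for every $p$, so it suffices to show $\lim_{p\to\infty}\rho_{p,\mu} = \hat\rho(\mathcal M)$. Throughout I would fix a proper cone $K$ left invariant by $\mathcal M$ and, using Remark~\ref{rmk:normequiv}, evaluate all norms with a cone linear absolute norm $\norm{\cdot}_f$ for some $f\in\interior(K^*)$, so that the monotonicity~\eqref{eq:ProdMono} of Lemma~\ref{lem:SeidmanInducednorm} is available; note also that $\mathcal M=\supp\mu$ is closed and bounded, hence compact. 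The upper bound $\rho_{p,\mu}\leq\hat\rho(\mathcal M)$ is routine: since $A_i\in\mathcal M$ almost surely, $\norm{A_k\dotsm A_1}\leq\sup_{B_1,\dots,B_k\in\mathcal M}\norm{B_k\dotsm B_1}$, and taking $p$-th moments and then $k\to\infty$ yields the claim for every $p$. As $\rho_{p,\mu}$ is nondecreasing in $p$ (Lemma~\ref{lem:basic.p-rad}), the limit $L:=\lim_{p}\rho_{p,\mu}$ exists with $L\leq\hat\rho(\mathcal M)$.

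The whole content is the reverse inequality, which I would reduce to the following claim: for every $\epsilon>0$ there are finitely many pairs $(S_1,B_1),\dots,(S_r,B_r)$ with $S_j\subset\mathcal M$, $\mu(S_j)>0$, $B_j\geq^K 0$, such that $A\geq^K B_j$ for all $A\in S_j$, and $\hat\rho(\{B_1,\dots,B_r\})\geq\hat\rho(\mathcal M)-\epsilon$. Granting this, set $c:=\min_j\mu(S_j)>0$. For each $k$ choose a word $w\in\{1,\dots,r\}^k$ with $\norm{B_{w_k}\dotsm B_{w_1}}\geq\hat\rho(\{B_j\})^k$, which exists because the joint spectral radius of a finite set equals the infimum over $k$ of the $k$-th root of the maximal length-$k$ product norm. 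On the event $\bigcap_{i=1}^k\{A_i\in S_{w_i}\}$, which by independence has probability $\prod_i\mu(S_{w_i})\geq c^k$, one has $A_i\geq^K B_{w_i}\geq^K 0$, so~\eqref{eq:ProdMono} gives $\norm{A_k\dotsm A_1}\geq\norm{B_{w_k}\dotsm B_{w_1}}\geq\hat\rho(\{B_j\})^k$. Hence $E[\norm{A_k\dotsm A_1}^p]\geq c^k\hat\rho(\{B_j\})^{kp}$, and $(E[\norm{A_k\dotsm A_1}^p])^{1/pk}\geq c^{1/p}\hat\rho(\{B_j\})$. Letting $k\to\infty$ gives $\rho_{p,\mu}\geq c^{1/p}\hat\rho(\{B_j\})\geq c^{1/p}(\hat\rho(\mathcal M)-\epsilon)$, and then $p\to\infty$ (with $c$ fixed, so $c^{1/p}\to1$) yields $L\geq\hat\rho(\mathcal M)-\epsilon$; as $\epsilon$ is arbitrary, $L\geq\hat\rho(\mathcal M)$. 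This is the decisive step: the fixed finite family together with independence produces a probability lower bound $c^k$ whose $k$-th root tends to $1$ uniformly in $k$ as $p\to\infty$, which is exactly the mechanism that, in~\cite{Blondel2005}, is supplied by counting the $N^k$ length-$k$ products.

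It remains to construct the family, and here~\ref{item:invariant} and~\ref{item:mu.nice} enter. By continuity of $\hat\rho$ on $\mathfrak K(\mathbb{R}^{n\times n})$ (Lemma~\ref{lem:basic.JSR}) and compactness of $\mathcal M$, a sufficiently fine $\epsilon$-net furnishes a finite $F=\{A_1^*,\dots,A_r^*\}\subset\mathcal M$ with $\hat\rho(F)\geq\hat\rho(\mathcal M)-\epsilon/2$. Writing $\mu=\mu_{ac}+\sum_i p_i\delta_{M_i}$ as permitted by~\ref{item:mu.nice}, I would take each net point to be either one of the finitely many atoms $M_i$ or a support point of $\mu_{ac}$ lying in $\interior\pi$; this is possible since $\mu_{ac}(\partial\pi)=0$ by Lemma~\ref{lem:nullset}, so every ball about a point of $\supp\mu_{ac}$ meets $\interior\pi$ in positive $\mu_{ac}$-measure and hence contains interior support points—that is, $\supp\mu_{ac}\cap\interior\pi$ is dense in $\supp\mu_{ac}$. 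For an atom set $S_j=\{M_j\}$, $B_j=M_j$. For an interior point $A_j^*\in\interior\pi$, fix a $K$-positive matrix $C\in\interior\pi$ (cf.~\eqref{eq:intpi(K)}) and set $B_j=A_j^*-\eta C$; for small $\eta>0$ one has $B_j\geq^K 0$, and because $\eta C\in\interior\pi$ there is a radius $r_\eta>0$ with $S_j:=\{A:\norm{A-A_j^*}<r_\eta\}\cap\mathcal M$ satisfying $A-B_j=(A-A_j^*)+\eta C\in\pi$, i.e.\ $A\geq^K B_j$, while $\mu(S_j)>0$ since $A_j^*\in\supp\mu$. As $\eta\to0$ the $B_j$ converge to the matrices of $F$, so one more appeal to continuity of $\hat\rho$ gives $\hat\rho(\{B_j\})\geq\hat\rho(F)-\epsilon/2\geq\hat\rho(\mathcal M)-\epsilon$ for small $\eta$, proving the claim.

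The step I expect to be the main obstacle is precisely this construction of the dominators. Securing simultaneously that $B_j\geq^K 0$ (required by Lemma~\ref{lem:SeidmanInducednorm}) and that an entire positive-measure set $S_j$ dominates $B_j$ from above forces one to push $A_j^*$ into $\interior\pi$, which is harmless only for matrices already interior to $\pi$. Reconciling this with the approximation of $\hat\rho(\mathcal M)$ is exactly what makes~\ref{item:mu.nice} indispensable: it confines the mass that can sit on the Lebesgue-null boundary $\partial\pi$ (Lemma~\ref{lem:nullset}) to finitely many atoms, handled exactly, while the remaining continuous part, being absolutely continuous, lives off $\partial\pi$ up to a null set and is therefore interior-approximable. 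A singular continuous component could instead spread mass along $\partial\pi$ in a manner that is neither finite nor interior-approximable, and it is this pathology that the assumption excludes.
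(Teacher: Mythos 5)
Your proof is correct, and its engine is the same as the paper's: cone-order domination $A_i\geq^K B_i\geq^K 0$ combined with the monotonicity \eqref{eq:ProdMono}, independence to bound the probability of the dominating event below by $c^k$, and the observation that $c^{1/p}\to 1$ as $p\to\infty$ --- this is exactly the content of the paper's Proposition~\ref{prop:frakM}, and your easy inequality and the reduction via Proposition~\ref{prop:p-rad} also coincide with the paper's. Where you genuinely diverge is in how the approximation of $\mathcal M$ is organized. The paper approximates $\mathcal M$ by the compact, generally infinite, subsets $\mathcal M_r=(\pi_r\cap\supp\mu_c)\cup\supp\mu_s$ and must then verify the uniform lower bound \eqref{eq:muS} over \emph{all} $M\in\mathcal M_r$; that uniformity is extracted from compactness together with continuity of $M\mapsto\mu(\mathcal S_{(1-\epsilon)M})$, whose proof is the measure-theoretic convergence Lemma~\ref{lem:SMSM.new}. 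You instead reduce to finite dominator families: an $\eta$-net $F\subset\mathcal M$ consisting of atoms and interior support points of $\mu_c$ (density of the latter via Lemma~\ref{lem:nullset}), with each interior net point shifted additively to $B_j=A_j^*-\eta C$ so that an entire ball $S_j$ of positive $\mu$-measure dominates $B_j$ in the cone order. Uniformity then costs nothing (a minimum over finitely many $j$), so the family $\mathfrak M$ and Lemma~\ref{lem:SMSM.new} are bypassed entirely; the price is that you need the standard fact that the joint spectral radius of a finite set equals $\inf_k$ of the normalized maximal product norm, in order to pick a near-extremal word at every length $k$, whereas the paper normalizes $\hat\rho(\mathcal N)=1$ and extracts a single sequence with non-vanishing products via Proposition~\ref{prop:AAS.char}. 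Both routes invoke Wirth's continuity (Lemma~\ref{lem:basic.JSR}) and the null-boundary Lemma~\ref{lem:nullset} at the same junctures, and \ref{item:mu.nice} enters both for the same reason (atoms handled exactly, the absolutely continuous part pushed off $\partial\pi$). Your route buys lighter measure theory and an explicit geometric construction (note your dominators $B_j$ need not lie in $\mathcal M$); the paper's buys a more general intermediate statement, since Proposition~\ref{prop:frakM} applies to arbitrary, possibly infinite, members of $\mathfrak M$, and it keeps every approximating object inside $\mathcal M$.
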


\added{If $\mu$ is the uniform distribution on a finite set then the
theorem recovers \cite[Theorem~3]{Blondel2005}.} As a simple
illustration of the present theorem let us see the next example.

\begin{example}\label{ex:limit}
Let $\gamma > 0$ and let $\mu$ be the uniform distribution on
$[0,\gamma]$. Clearly $\mu$ is absolutely continuous and $\mathcal M =
\supp \mu = [0, \gamma]$ leaves the proper cone~\changed{$\{ r: r\geq 0
\}$}{$\mathbb{R}_+$} of~$\mathbb{R}$ invariant. It is easy to observe
$\hat{\rho}(\mathcal M) = \gamma$ and $\rho(E[A^{\otimes p}]) =
\gamma^p/(p+1)$. Therefore $\lim_{p\to\infty} \rho(E[A^{\otimes
p}])^{\added{1/p}} = \gamma = \hat \rho(\mathcal M)$, as expected. The
characterization in~\cite{Blondel2005} cannot be applied to this
simple example as $\mu$ has an infinite support.
\end{example}

\added{We can use the above limit formula to generalize another limit
formula of joint spectral radius given in \cite{Xu2011}.}

\begin{corollary}
If $\mu$ is of the form~\eqref{eq:mu_s} then
\begin{align}
\hat \rho (\mathcal M) 
&=
\lim_{p\to\infty} \rho( E[A^{\otimes (2p)}] )^{1/(2p)} \label{eq:Limit.2p}
\\
&=
\limsup_{p\to\infty} \rho( E[A^{\otimes p}] )^{1/p}. \label{eq:LimitSup}
\end{align}
\end{corollary}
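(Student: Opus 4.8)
The plan is to remove the invariant-cone hypothesis \ref{item:invariant} from Theorem~\ref{thm:main:equiv} by passing to the second lift. A distribution $\mu$ of the form~\eqref{eq:mu_s} is purely atomic, hence automatically satisfies \ref{item:mu.nice}, but it need not leave any proper cone invariant, so Theorem~\ref{thm:main:equiv} cannot be applied to $\mu$ directly. The key observation is that its lift does satisfy the missing hypothesis: setting $\nu := \mu^{[2]}$, the support of $\nu$ equals $\mathcal M^{[2]}$, a finite and hence closed set, which leaves a proper cone invariant by Lemma~\ref{lem:lifts}(1); moreover $\nu$ is again a finite combination of Dirac measures, so it satisfies both \ref{item:invariant} and \ref{item:mu.nice}. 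Theorem~\ref{thm:main:equiv} therefore applies to $\nu$ and yields $\hat\rho(\mathcal M^{[2]}) = \lim_{p\to\infty} \rho_{p, \mu^{[2]}}$.

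First I would establish~\eqref{eq:Limit.2p}. Using $\hat\rho(\mathcal M^{[2]}) = \hat\rho(\mathcal M)^2$ from~\eqref{eq:p-rad[m]} and $\rho_{p,\mu^{[2]}} = \rho_{2p,\mu}^2$ from~\eqref{eq:p-rad.lift}, the displayed limit above becomes $\hat\rho(\mathcal M)^2 = \lim_{p\to\infty}\rho_{2p,\mu}^2$, that is, $\hat\rho(\mathcal M) = \lim_{p\to\infty}\rho_{2p,\mu}$. Since $2p$ is even, Proposition~\ref{prop:p-rad} gives $\rho_{2p,\mu} = \rho(E[A^{\otimes (2p)}])^{1/(2p)}$ for every $p$, and substituting this identity proves~\eqref{eq:Limit.2p}.

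For~\eqref{eq:LimitSup} I would combine the above with the monotonicity of the $p$-radius. By Lemma~\ref{lem:basic.p-rad}(1) the sequence $\rho_{p,\mu}$ is nondecreasing in $p$, and we have just seen that its even-indexed subsequence converges to $\hat\rho(\mathcal M)$; a monotone sequence possessing a convergent subsequence converges to the same limit, so $\lim_{p\to\infty}\rho_{p,\mu} = \hat\rho(\mathcal M)$ and in particular $\rho_{p,\mu}\le \hat\rho(\mathcal M)$ for all $p$. For the upper bound, inequality~\eqref{eq:p-rad.geq} gives $\rho(E[A^{\otimes p}])^{1/p}\le \rho_{p,\mu}\le\hat\rho(\mathcal M)$, whence $\limsup_{p\to\infty}\rho(E[A^{\otimes p}])^{1/p}\le\hat\rho(\mathcal M)$. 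For the matching lower bound, the even-indexed subsequence already attains the value $\hat\rho(\mathcal M)$ in the limit by~\eqref{eq:Limit.2p}, so the $\limsup$ over all $p$ is at least $\hat\rho(\mathcal M)$. The two inequalities together yield~\eqref{eq:LimitSup}.

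Once the lift reduction is in place the remainder is essentially bookkeeping, so I do not anticipate a serious obstacle. The one point that requires care is the verification that $\nu = \mu^{[2]}$ genuinely satisfies the hypotheses of Theorem~\ref{thm:main:equiv}: namely that $\supp \mu^{[2]} = \mathcal M^{[2]}$, which uses the finiteness of $\mathcal M$ to ensure the image set is closed, and that the identities~\eqref{eq:p-rad[m]} and~\eqref{eq:p-rad.lift} are invoked with the correct lift index $m = 2$.
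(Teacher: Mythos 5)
Your proposal is correct and follows essentially the same route as the paper: apply Theorem~\ref{thm:main:equiv} to the lift $\mu^{[2]}$ (which satisfies \ref{item:invariant} by Lemma~\ref{lem:lifts} and \ref{item:mu.nice} as a finite sum of point masses), translate via \eqref{eq:p-rad[m]}, \eqref{eq:p-rad.lift}, and Proposition~\ref{prop:p-rad} to get \eqref{eq:Limit.2p}, then control the odd/general indices with monotonicity (Lemma~\ref{lem:basic.p-rad}) and \eqref{eq:p-rad.geq} to get \eqref{eq:LimitSup}. The only cosmetic difference is that you sandwich $\rho(E[A^{\otimes p}])^{1/p}\leq\rho_{p,\mu}\leq\hat\rho(\mathcal M)$ for all $p$ using convergence of the full monotone sequence, whereas the paper bounds the odd terms by the next even term; the ingredients are identical.
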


\added{It is remarked that, setting $\mu$ to be the uniform distribution
in this corollary, we can recover Theorem~2.1 in \cite{Xu2011}.}


\begin{proof}
\added{We shall apply Theorem~\ref{thm:main:equiv} to $\mu^{[2]}$, which
satisfies both \ref{item:invariant} and \ref{item:mu.nice} because its
support $\{A_1^{[2]}, \dotsc, A_N^{[2]}\}$ leaves a proper cone
invariant by Lemma~\ref{lem:lifts} and also ${\mu^{[2]}} =
\sum_{i=1}^N p_i \delta_{A_i^{[2]}}$ is a finite sum of point masses.
Therefore Theorem~\ref{thm:main:equiv} shows $\hat \rho(\supp
\mu^{[2]})  =\lim_{p\to\infty} \rho_{p, \mu^{[2]}}$. This equation
implies the first equality~\eqref{eq:Limit.2p} because
\eqref{eq:p-rad[m]} shows $\hat \rho(\supp \mu^{[2]}) = \hat
\rho(\mathcal M)^2$ and also \eqref{eq:p-rad.lift} and
Proposition~\ref{prop:p-rad} yield $\rho_{p, \mu^{[2]}} = \rho_{2p,
\mu}^2 = \rho(E[A^{\otimes (2p)}])^{1/p}$. Then let us show the second
equation~\eqref{eq:LimitSup}. Using the
inequality~\eqref{eq:p-rad.geq}, the monotonicity of $p$-radius
(Lemma~\ref{lem:basic.p-rad}), and Proposition~\ref{prop:p-rad}, we
can show
\begin{equation*}
\begin{aligned}
\rho(E[A^{\otimes (2p-1)}])^{1/(2p-1)}
&\leq
\rho_{2p-1, \mu} 
\\
&\leq
\rho_{2p, \mu} 
\\
&=
\rho(E[A^{\otimes (2p)}])^{1/(2p)}. 
\end{aligned}
\end{equation*}
This inequality and \eqref{eq:Limit.2p} prove the
equation~\eqref{eq:LimitSup}.}
\end{proof}

\subsection{Proof}

\delled{In the rest of this section we give the proof of
Theorem~{\ref{thm:Limit}}.} \added{This section gives the proof of
Theorem~\ref{thm:main:equiv}. Let $\mu$ be a probability distribution
on $\mathbb{R}^{n\times n}$ with bounded support. First we observe
that the definitions of $p$-radius and joint spectral radius
immediately show $\hat \rho(\mathcal M) \geq \rho_{p,\mu}$. Since
$\rho_{p, \mu}$ is non-decreasing with respect to $p$ by
Lemma~\ref{lem:basic.p-rad}, the limit $\lim_{p\to\infty} \rho_{p,
\mu}$ exists and satisfies
\begin{equation}\label{eq:main:pre}
\hat \rho(\mathcal M) \geq 
\lim_{p\to\infty} 
\rho_{p, \mu}.
\end{equation}}
Therefore, to prove Theorem~\ref{thm:main:equiv}, we need to
show\delws\del{the next theorem.}
\add{\begin{equation} \label{eq:main:oppst}
\hat \rho(\mathcal M) \leq 
\lim_{p\to\infty} 
\rho_{p, \mu}
\end{equation}
under the assumption that $\mu$ satisfies \ref{item:invariant}
and \ref{item:mu.nice}}
\delx{\begin{theorem}\label{thm:oppst}
{\del{Let $\mu$ be a probability distribution satisfying
{\ref{item:invariant}} and {\ref{item:mu.nice}} and let $\mathcal M =
\supp \mu$. Then}
\begin{equation} \label{eq:main:oppstold}
\stmath{\hat \rho(\mathcal M) \leq 
\lim_{p\to\infty} 
\rho_{p, \mu}.}
\end{equation}}
\end{theorem}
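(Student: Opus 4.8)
The plan is to prove the remaining inequality \eqref{eq:main:oppst}; together with \eqref{eq:main:pre} this gives Theorem~\ref{thm:main:equiv}. Throughout I fix a cone linear absolute norm $\norm{\cdot}$ with respect to the cone $K$ left invariant by $\mathcal M$ (assumption \ref{item:invariant}); by Remark~\ref{rmk:normequiv} this choice is harmless. The whole argument reduces to the following construction: given $\epsilon>0$, produce an integer $k$, matrices $B_1,\dotsc,B_k$ with $B_i\geq^K 0$, and Borel sets $\mathcal U_1,\dotsc,\mathcal U_k$ with $\mu(\mathcal U_i)>0$ such that (i) every $A\in\mathcal U_i$ satisfies $A\geq^K B_i$, and (ii) $\rho(B_k\dotsm B_1)^{1/k}\geq\hat\rho(\mathcal M)-\epsilon$. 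Once this is available, set $P:=B_k\dotsm B_1$ and $q:=\prod_{i=1}^k\mu(\mathcal U_i)>0$, and restrict each factor of a product of length $k\ell$ to the corresponding $\mathcal U_i$ block by block. By independence this event has probability $q^\ell$, and on it the monotonicity \eqref{eq:ProdMono} of Lemma~\ref{lem:SeidmanInducednorm} gives $\norm{A_{k\ell}\dotsm A_1}\geq\norm{P^\ell}$, whence $E[\norm{A_{k\ell}\dotsm A_1}^p]\geq q^\ell\norm{P^\ell}^p$. Taking the $1/(pk\ell)$ power and letting $\ell\to\infty$ along this subsequence of \eqref{eq:defn:p-rad} yields $\rho_{p,\mu}\geq q^{1/(pk)}\rho(P)^{1/k}$; letting $p\to\infty$ kills the factor $q^{1/(pk)}\to 1$ and leaves $\lim_{p\to\infty}\rho_{p,\mu}\geq\rho(P)^{1/k}\geq\hat\rho(\mathcal M)-\epsilon$, which is \eqref{eq:main:oppst} as $\epsilon$ is arbitrary.

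It remains to carry out the construction, and this is where assumption \ref{item:mu.nice} and Lemma~\ref{lem:nullset} enter. First, by the generalized spectral radius theorem (see, e.g., \cite{Jungers2009}) there are $k$ and $C_1,\dotsc,C_k\in\mathcal M$ with $\rho(C_k\dotsm C_1)^{1/k}>\hat\rho(\mathcal M)-\epsilon/2$. Writing $\mu=\mu_{ac}+\mu_s$ with $\mu_s=\sum_i p_i\delta_{M_i}$, the support decomposes as $\mathcal M=\supp\mu_{ac}\cup\{M_1,\dotsc,M_N\}$, so each $C_i$ is either an atom or a point of $\supp\mu_{ac}$. Fix $E\in\interior\pi$ (such $E$ exists since the rank-one map $u f^\top$ with $u\in\interior K$ and $f\in\interior(K^*)$ is $K$-positive, cf.\ \eqref{eq:intpi(K)}). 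For an atom $C_i=M_j$ I take $B_i:=C_i$ and $\mathcal U_i:=\{M_j\}$, so that $\mu(\mathcal U_i)\geq p_j>0$ and (i) holds trivially even if $C_i\in\partial\pi$. For $C_i\in\supp\mu_{ac}$, note that $\mu_{ac}(\partial\pi)=0$ by Lemma~\ref{lem:nullset} and $\mu_{ac}(\mathbb{R}^{n\times n}\setminus\pi)=0$, so every neighborhood of $C_i$ carries positive $\mu_{ac}$-mass inside $\interior\pi$; hence I may replace $C_i$ by a nearby $C_i'\in\supp\mu_{ac}\cap\interior\pi$ and set $B_i:=C_i'-\eta_i E$ with $\eta_i>0$ so small that $B_i\in\interior\pi$, and $\mathcal U_i:=\{A:\norm{A-C_i'}<r_i\}$ with $r_i$ small enough that $A\in\mathcal U_i$ implies $A-B_i=(A-C_i')+\eta_i E\in\pi$ and that $\mu(\mathcal U_i)>0$ (possible since $C_i'\in\supp\mu$). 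Choosing all perturbations small, continuity of $\rho$ and of matrix multiplication gives $\rho(B_k\dotsm B_1)^{1/k}\geq\hat\rho(\mathcal M)-\epsilon$, so (i) and (ii) both hold.

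The main obstacle is (ii): I must exhibit a product whose spectral radius, not merely its norm, is close to $\hat\rho(\mathcal M)^k$, while each factor still lies below a set of positive $\mu$-measure in the cone order. The spectral radius is forced on us because the iteration uses $\norm{P^\ell}^{1/\ell}\to\rho(P)$, so a product that is large only in norm would be useless; this is exactly where I invoke the generalized spectral radius theorem. The competing requirement of a positive-measure cone-lower-bound $B_i$ close to $C_i$ is the delicate part, since the monotonicity direction of \eqref{eq:ProdMono} needs $B_i$ to sit \emph{below} a whole neighborhood yet remain $K$-nonnegative. Fattening a factor into the interior via $-\eta_i E$ is possible only when $C_i\in\interior\pi$, so boundary factors can be accommodated only as atoms. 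Ruling out a singular-continuous part that could concentrate mass on the Lebesgue-null set $\partial\pi$ is precisely the role of assumption \ref{item:mu.nice} together with Lemma~\ref{lem:nullset}; without it the clean dichotomy ``atom, or interiorizable absolutely continuous point'' would break down and the construction could not be completed.
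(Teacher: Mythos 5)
Your proof is correct, and it reaches \eqref{eq:main:oppst} by a genuinely different route than the paper. The shared core is the probabilistic estimate: restrict to the event that each factor dominates, in the cone order, a fixed $K$-nonnegative matrix sitting below a set of positive $\mu$-measure, apply the monotonicity \eqref{eq:ProdMono} of cone linear absolute norms, bound the expectation via Markov's inequality, and let $p\to\infty$ kill the probability factor (your $q^{1/(pk)}$, the paper's $\delta^{1/p}$); both proofs also use Lemma~\ref{lem:nullset} together with \ref{item:mu.nice} to prevent mass from hiding on $\partial\pi$. The difference lies in how the near-extremal deterministic products are produced. You invoke the Berger--Wang (generalized spectral radius) theorem to extract one finite product $C_k\dotsm C_1$ with $\rho(C_k\dotsm C_1)^{1/k}$ close to $\hat\rho(\mathcal M)$, perturb each factor downward into $\interior\pi$ so that it lies below a ball of positive measure, repeat the pattern periodically, and recover the spectral radius through Gelfand's formula $\norm{P^\ell}^{1/\ell}\to\rho(P)$. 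The paper instead works entirely with norms: it defines the family $\mathfrak M$ of subsets enjoying the uniform measure lower bound \eqref{eq:muS}, proves Proposition~\ref{prop:frakM} by following a nearly most unstable \emph{infinite} trajectory (obtained from Proposition~\ref{prop:AAS.char}), shows that the compact truncations $\mathcal M_r=(\pi_r\cap\supp\mu_c)\cup\supp\mu_s$ belong to $\mathfrak M$ via a compactness-and-continuity argument for $\phi(M)=\mu(\mathcal S_{(1-\epsilon)M})$, and finally passes to the limit $r\to0$ using Wirth's Hausdorff-metric continuity of the joint spectral radius (Lemma~\ref{lem:basic.JSR} and \eqref{eq:hatrholim}). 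What each buys: your argument needs only finitely many measure-positive neighborhoods, so it dispenses with the uniformity-over-compact-sets step and with Wirth's continuity theorem entirely, at the price of importing Berger--Wang for bounded sets (a deep result, though available in \cite{Jungers2009}); the paper avoids Berger--Wang and Gelfand altogether, and its intermediate machinery has independent interest, e.g.\ it isolates the open question of whether $\mathcal M$ itself lies in $\mathfrak M$, which your construction sidesteps rather than answers. One point you should state explicitly: evaluating the defining limit of $\rho_{p,\mu}$ along the subsequence of lengths $k\ell$ is legitimate precisely because that limit is known to exist when $\supp\mu$ is bounded.
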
}
In the rest of this section we prove
\change{Theorem~{\ref{thm:oppst}}}{inequality~\eqref{eq:main:oppst}}.
In the sequel it is assumed that $\mu$ satisfies both
\ref{item:invariant} and \ref{item:mu.nice}. Let $\mathcal M = \supp
\mu$ and let $K$ be the proper cone left invariant by $\mathcal M$. 

\added{We first note that showing \eqref{eq:main:oppst} is not as
straightforward as showing \eqref{eq:main:pre}. Inequality
\eqref{eq:main:oppst} means that the maximum growth rate~$\hat
\rho(\mathcal M)$ of the products of matrices from $\mathcal M$ can be
attained by the $p$th averaged growth rate~$\rho_{p, \mu}$ of the
products as $p\to\infty$. The difficulty in showing the inequality is
that the set of sequences giving the maximum growth rate can have a
very small probability, possibly zero, and therefore we should not
expect that the rate can be captured by the averaged growth rate. For
example, the joint spectral radius $\gamma$ in Example~\ref{ex:limit}
results from the singleton~$\{\gamma\}$, which is a null set for
$\mu$.}

\added{We avoid the above mentioned problem by first focusing on
well-behaving subsets of $\mathcal M$, and then approximating
$\mathcal M$ by a sequence of such subsets. For $M\in
\mathbb{R}^{n\times n}$ let
\begin{equation*}
\mathcal S_M := \{A\in\mathbb{R}^{n\times n} : A \geq^K M \}.
\end{equation*}
Then we define $\mathfrak M$\label{pg:def:frakM} as a family of
measurable and nonempty subsets $\mathcal N$ of $\mathcal M$
satisfying the following property: for every $\epsilon>0$ there exists
$\delta>0$ such that
\begin{equation}\label{eq:muS}
\mu(\mathcal S_{(1-\epsilon)M})\geq \delta
\end{equation}
for every $M\in \mathcal N$. The next proposition shows that the joint
spectral radius of a subset belonging to $\mathfrak M$ admits an
estimate of the form \eqref{eq:main:oppst}. Roughly speaking,
inequality~\eqref{eq:muS} will be used to guarantee that $\mu$ is
always ``aware'' of products of matrices with almost maximum growth
rates. The uniformity of the lower bound~$\delta$ with respect to $M$
plays a key role.}

\begin{proposition}\label{prop:frakM}
\added{If $\mathcal N \in \mathfrak M$ then $\hat \rho(\mathcal N)
\leq \lim_{p\to\infty} \rho_{p, \mu}$.}
\end{proposition}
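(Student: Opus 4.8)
The plan is to exploit the uniform lower bound built into the definition of $\mathfrak M$ together with the monotonicity of cone linear absolute norms under the cone ordering. By Remark~\ref{rmk:normequiv} the value of $\rho_{p,\mu}$ is independent of the norm, and the joint spectral radius is likewise norm-independent (the induced norm \eqref{eq:normA_f} is a submultiplicative matrix norm), so I would fix once and for all a cone linear absolute norm $\norm{\cdot}=\norm{\cdot}_f$ with respect to $K$, coming from some $f\in\interior(K^*)$, and use it throughout. Note that every $M\in\mathcal N\subset\mathcal M$ leaves $K$ invariant, i.e.\ $M\geq^K 0$, and for $\epsilon\in(0,1)$ so does $(1-\epsilon)M$.

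Fix $\epsilon\in(0,1)$ and let $\delta>0$ be the constant furnished by $\mathcal N\in\mathfrak M$, so that $\mu(\mathcal S_{(1-\epsilon)M})\geq\delta$ for every $M\in\mathcal N$. Let $M_1,\dotsc,M_k\in\mathcal N$ be arbitrary and let $A_1,\dotsc,A_k$ be independent random variables following $\mu$. The crucial step is the estimate
\begin{equation*}
E\bigl[\norm{A_k\dotsm A_1}^p\bigr]\geq \delta^k(1-\epsilon)^{kp}\,\norm{M_k\dotsm M_1}^p.
\end{equation*}
To obtain it I would consider the event that $A_i\in\mathcal S_{(1-\epsilon)M_i}$, that is $A_i\geq^K(1-\epsilon)M_i$, for every $i=1,\dotsc,k$. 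By independence and the uniform bound~\eqref{eq:muS} this event has probability at least $\delta^k$, and on it Lemma~\ref{lem:SeidmanInducednorm}(2), applied with $B_i=(1-\epsilon)M_i$, gives $\norm{A_k\dotsm A_1}\geq(1-\epsilon)^k\norm{M_k\dotsm M_1}$; discarding the complementary event (the integrand being nonnegative) yields the displayed inequality.

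Raising both sides to the power $1/(pk)$ gives $(E[\norm{A_k\dotsm A_1}^p])^{1/pk}\geq\delta^{1/p}(1-\epsilon)\norm{M_k\dotsm M_1}^{1/k}$. The left-hand side is independent of the $M_i$, so I would take the supremum over $M_1,\dotsc,M_k\in\mathcal N$ on the right and then pass to the limit superior as $k\to\infty$; since the left-hand side converges to $\rho_{p,\mu}$ by definition, this produces
\begin{equation*}
\rho_{p,\mu}\geq\delta^{1/p}(1-\epsilon)\,\hat\rho(\mathcal N).
\end{equation*}
Here the uniformity of $\delta$ in $M$ is what makes the argument work: it collapses the product of the individual lower bounds into the single factor $\delta^k$, whose $(pk)$-th root $\delta^{1/p}$ is independent of $k$ and tends to $1$ as $p\to\infty$. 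Letting $p\to\infty$ therefore gives $\lim_{p\to\infty}\rho_{p,\mu}\geq(1-\epsilon)\hat\rho(\mathcal N)$, and finally letting $\epsilon\to0$ yields the claim.

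The step I expect to require the most care is the monotonicity estimate: it relies on working with a genuine cone linear absolute norm (so that Lemma~\ref{lem:SeidmanInducednorm}(2) is available) and on verifying $(1-\epsilon)M_i\geq^K 0$, so that the chain $A_i\geq^K(1-\epsilon)M_i\geq^K 0$ required by that lemma indeed holds. Beyond that, the proof is a bookkeeping of the three successive limits $k\to\infty$, then $p\to\infty$, then $\epsilon\to0$, which decouple cleanly precisely because the constant $\delta$ is uniform over $M\in\mathcal N$.
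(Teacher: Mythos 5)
Your proof is correct, and its engine is identical to the paper's: the uniform constant $\delta$ from the definition of $\mathfrak M$, independence giving the event $\{A_i \geq^K (1-\epsilon)M_i \text{ for every } i\}$ probability at least $\delta^k$, Lemma~\ref{lem:SeidmanInducednorm}(2) on that event, a Markov-type restriction of the expectation, and then the three limits $k\to\infty$, $p\to\infty$, $\epsilon\to 0$. Where you genuinely differ is in how $\hat\rho(\mathcal N)$ enters. The paper first disposes of the case $\hat\rho(\mathcal N)=0$, normalizes $\hat\rho(\mathcal N)=1$ by rescaling, and then invokes Proposition~\ref{prop:AAS.char} to extract a single infinite sequence $\{M_k\}_{k=1}^\infty\subset\mathcal N$ whose partial products satisfy $\norm{M_k\dotsm M_1}>c$ for infinitely many $k$; the estimate is then run along that near-extremal trajectory. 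You instead run the estimate on an arbitrary finite word $M_1,\dotsc,M_k\in\mathcal N$, observe that the resulting bound $\rho_{p,\mu}\geq\delta^{1/p}(1-\epsilon)\,\norm{M_k\dotsm M_1}^{1/k}$ is uniform in the word, and pass to the supremum and the $\limsup$ that define $\hat\rho(\mathcal N)$. This buys three small simplifications: no separate treatment of $\hat\rho(\mathcal N)=0$; no scaling step (which in the paper implicitly requires rescaling $\mu$ as well and checking that the $\mathfrak M$-property is preserved); and no appeal to Proposition~\ref{prop:AAS.char}, whose conclusion --- one fixed trajectory with infinitely many large partial products --- is itself a bit stronger than the bare negation of absolute exponential stability and so carries a hidden cost. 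The paper's route, in exchange, is phrased in terms of near-extremal trajectories of the switched system, which matches the heuristic discussion preceding the proposition; your version is the more elementary and self-contained of the two.
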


\begin{proof}
\added{If $\hat \rho(\mathcal N) = 0$ then the inequality holds
vacuously. Assume $\hat \rho(\mathcal N) > 0$. Then, without loss of
generality we can assume $\hat \rho(\mathcal N) = 1$ by scaling
matrices in $\mathcal N$ by the factor~$1/\hat \rho(\mathcal N)$. Take
a cone linear absolute norm~$\norm{\cdot}$ with respect to~$K$. By
Proposition~\ref{prop:AAS.char}, there exist $c>0$ and
$\{M_k\}_{k=1}^\infty \subset \mathcal N$ such that $\norm{M_k \dotsm
M_1} > c$ for infinitely many $k$. Take an arbitrary $\gamma < 1$ and
define $\epsilon := 1-\gamma$. Let us take the corresponding
$\delta>0$ satisfying \eqref{eq:muS}. Observe that if $A_i \in
\mathcal S_{(1-\epsilon)M_i}$ then $A_i \geq^K (1-\epsilon)M_i =
\gamma M_i$ so that, by \eqref{eq:ProdMono}, we have  $\norm{A_k
\dotsm A_1} \geq \gamma^k \norm{M_k\dotsm M_1} > c \gamma^k$.
Therefore
\begin{equation*}
\mu^k\left(\{(A_1, \dotsc, A_k) : \norm{A_k\dotsm A_1} 
> 
c\gamma^k\}\right)  
\geq 
\prod_{i=1}^k \mu( \mathcal S_{(1-\epsilon)M_i} ) 
\geq \delta^k
\end{equation*}
and hence, by Markov's inequality, we obtain $E[\norm{A_k\dotsm
A_1}^p] > \delta^k (c \gamma^{k})^p$, which implies $E[\norm{A_k\dotsm
A_1}^p]^{1/kp} > \delta^{1/p} c^{1/k}  \gamma$. Taking the limit
$k\to\infty$ in this inequality shows $\rho_{p, \mu} \geq
\delta^{1/p}\gamma$ by Remark~\ref{rmk:normequiv}. Thus we obtain
$\lim_{p\to\infty}\rho_{p,\mu}\geq \gamma$. Since $\gamma$ can be made
arbitrarily close to $1$ we see $\lim_{p\to\infty}\rho_{p,\mu}\geq 1 =
\hat \rho(\mathcal N)$, as desired.}
\end{proof}

\added{If we could show that $\mathcal M$ is in $\mathfrak M$ then
Proposition~\ref{prop:frakM} proves \change{Theorem~{\ref{thm:oppst}}}{inequality \eqref{eq:main:oppst}}. In this
paper, however, we leave open the problem of checking $\mathcal M \in
\mathfrak M$ and take another approach via the approximation of
$\mathcal M$ by elements in $\mathfrak M$.} \changed{The proof relies
on two propositions, each of which is preceded by a lemma. When $\mu$
satisfies {\ref{item:mu.nice}} we can uniquely decompose}{Let us
decompose} $\mu$ as
\begin{equation}\label{eq:decomp}
\mu = \mu_c + \mu_s, 
\end{equation}
where $\mu_c$ is an absolutely continuous measure and $\mu_s$ is
either the zero measure or is of the
form~\eqref{eq:mu_s}.\delwsed\delled{We write $\mathcal M:= \supp \mu$,
$\mathcal{M}_c := \supp \mu_c$ and $\mathcal{M}_s := \supp \mu_s$.}
Clearly \changed{$\mathcal M = \mathcal M_c \cup \mathcal
M_s$}{$\mathcal M = (\supp \mu_c) \cup (\supp \mu_s)$}. \changed{For a
proper cone~$K$ of $\mathbb{R}^n$ and $r\geq 0$}{For $r>0$} we define
\begin{equation*}
\pi_r := \left\{
M\in\pi : d(M, \partial \pi )\geq r
\right\}.
\end{equation*}
Notice that $\pi_r \subset \interior \pi$\delled{ if $r>0$}. Finally we
let
\delxed{\begin{equation*}
\stmathed{\mathcal M_r := (\mathcal M_c \cap \pi_r) \cup \mathcal M_s.}
\end{equation*}}
\added{\begin{equation*}
\mathcal M_r := (\pi_r \cap \supp \mu_c) \cup \supp \mu_s.
\end{equation*}}
\added{We shall show that this $\mathcal M_r$ is indeed in $\mathfrak M$
and, furthermore, as $r\to 0$, $\hat \rho(\mathcal M_r)$ converges to
$\hat \rho(\mathcal M)$.  Let us begin with the next observation.}

\begin{lemma}
\added{There exists $r_0 > 0$ such that $\mathcal M_r \subset \mathfrak
K(\mathbb{R}^{n\times n})$ for every $r < r_0$.}
\end{lemma}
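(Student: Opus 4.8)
The plan is to check the two requirements for membership in $\mathfrak K(\mathbb{R}^{n\times n})$ one at a time: that $\mathcal M_r$ is compact, which I expect to hold for \emph{every} $r>0$, and that it is nonempty, which is the only place the threshold $r_0$ is really needed.

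For compactness I would treat the two pieces of $\mathcal M_r = (\pi_r \cap \supp \mu_c) \cup \supp \mu_s$ separately. The set $\supp \mu_s$ is finite by \ref{item:mu.nice}, hence compact. For the other piece, since $\mu_c \leq \mu$ we have $\supp \mu_c \subset \supp \mu = \mathcal M$, which is bounded by the standing boundedness assumption; being a support, $\supp \mu_c$ is also closed, hence compact. The set $\pi_r$ is closed, being the intersection of the closed cone $\pi$ with the superlevel set $\{M : d(M,\partial \pi) \geq r\}$ of the (Lipschitz, hence continuous) map $M \mapsto d(M, \partial\pi)$. Thus $\pi_r \cap \supp \mu_c$ is compact, and a union of two compact sets is compact. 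As this argument uses nothing about the size of $r$, compactness places no constraint on $r_0$.

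The real content is nonemptiness. If $\mu_s \neq 0$ then $\supp \mu_s \neq \emptyset$, so $\mathcal M_r$ is nonempty for every $r$ and any $r_0$ works. The interesting case is $\mu_s = 0$, where $\mu = \mu_c$ is a probability measure, $\supp \mu_c = \mathcal M \subset \pi$, and nonemptiness must be produced from $\pi_r \cap \supp \mu_c$. Here I would invoke Lemma~\ref{lem:nullset}: as $\mu_c$ is absolutely continuous and $\partial\pi$ is Lebesgue-null, $\mu_c(\partial\pi)=0$. Combining this with $\supp \mu_c \subset \pi$ and the disjoint decomposition $\pi = \interior\pi \sqcup \partial\pi$ gives $\mu_c(\interior\pi) = \mu_c(\pi) = 1 > 0$, so $\interior\pi$ cannot be disjoint from $\supp\mu_c$ (a set of positive measure always meets the support). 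Fixing any $M_0 \in \supp \mu_c \cap \interior \pi$ and setting $r_0 := d(M_0, \partial\pi)$, which is strictly positive since $M_0$ lies in the open set $\interior\pi$ away from the closed set $\partial\pi$, I get $M_0 \in \pi_r \cap \supp \mu_c \subset \mathcal M_r$ for every $r < r_0$.

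The single delicate point is this nonemptiness argument when $\mu_s = 0$: without the null-set property of $\partial\pi$ one could not exclude the possibility that all of $\supp \mu_c$ lies on $\partial\pi$, in which case $\pi_r \cap \supp \mu_c$ would be empty for every $r>0$ and no admissible $r_0$ could exist. Everything else reduces to routine closedness and boundedness bookkeeping.
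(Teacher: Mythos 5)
Your proof is correct and follows essentially the same route as the paper's: both reduce the problem to nonemptiness (treating compactness as routine), both split on whether $\mu_s = 0$, and both rest on Lemma~\ref{lem:nullset} to rule out the absolutely continuous part $\mu_c$ being concentrated on the Lebesgue-null set $\partial\pi$. The only difference is presentational: the paper argues by contradiction (assuming $\mathcal M_r = \emptyset$ for every $r>0$ and using monotonicity in $r$), whereas you directly exhibit a witness $M_0 \in \supp\mu_c \cap \interior\pi$ and take $r_0 = d(M_0, \partial\pi)$.
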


\begin{proof}
\added{Since $\mathcal M_r$ is always compact, we need  show that
$\mathcal M_r$ is not empty for every $r< r_0$ for some $r_0>0$. Since
$\mathcal M_r$ is decreasing with respect to $r$, it is sufficient to
show that there exists $r_0>0$ such that $\mathcal M_{r_0}$ is
nonempty. Assume the contrary, i.e., $\mathcal{M}_r = \emptyset$ for
every $r>0$. Then it must be that $\supp \mu_c \subset \partial\pi$
and $\mu_s = 0$. The latter condition shows that $\mu_c$ is nonzero.
Thus the former condition shows that the nonzero and absolutely
continuous measure $\mu_c$ is concentrated on a null set by
Lemma~\ref{lem:nullset}, which is a contradiction.}
\end{proof}

\added{In the sequel we assume $r< r_0$. In order to show $\mathcal M_r
\in \mathfrak M$ we will need the next lemma.}

\begin{lemma}\label{lem:SMSM.new}
\added{ {Assume that $\mu$ is absolutely continuous (i.e., $\mu_s = 0$).
Let $M\in\mathbb{R}^{n\times n}$ be arbitrary.} If a sequence~$\{M_k
\}_{k=1}^\infty \subset \mathbb{R}^{n\times n}$ converges to~$M$ then
\begin{align}
\lim_{k\to\infty} \mu(\mathcal S_{M}\backslash \mathcal S_{M_k}) &= 0, 
\label{eq:lim.first.new}
\\
\lim_{k\to\infty} \mu(\mathcal S_{M_k}\backslash \mathcal S_M) &= 0. 
\notag
\end{align}}
\end{lemma}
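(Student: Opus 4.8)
The plan is to recast both measures in terms of indicator functions and then apply the dominated convergence theorem. First I would note that, by definition, $\mathcal S_M = M + \pi$ is a translate of the closed convex cone $\pi = \pi(K)$, so that $\mathbf 1_{\mathcal S_{M_k}}(A) = \mathbf 1_\pi(A - M_k)$ and $\mathbf 1_{\mathcal S_M}(A) = \mathbf 1_\pi(A - M)$. Since each of the two sets $\mathcal S_M \backslash \mathcal S_{M_k}$ and $\mathcal S_{M_k}\backslash \mathcal S_M$ is contained in the symmetric difference $\mathcal S_M \mathbin{\triangle} \mathcal S_{M_k}$, it is enough to prove the single statement that $\mu(\mathcal S_M \mathbin{\triangle} \mathcal S_{M_k}) \to 0$, that is, $\int \abs{\mathbf 1_\pi(A - M) - \mathbf 1_\pi(A - M_k)}\, d\mu(A) \to 0$.

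The key observation is that the indicator of the closed set $\pi$ is continuous at every point outside $\partial \pi$: at an interior point it equals $1$ throughout a neighborhood, while at a point of the open complement it equals $0$ throughout a neighborhood. Consequently, whenever $A - M \notin \partial \pi$, the convergence $A - M_k \to A - M$ forces $\mathbf 1_\pi(A - M_k) \to \mathbf 1_\pi(A - M)$, so the integrand tends to $0$ pointwise off the exceptional set $M + \partial \pi$.

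The main point, and the step that uses the hypotheses essentially, is that this exceptional set is $\mu$-null. Since $\pi$ is convex (it is closed under convex combinations because $K$ is convex), Lemma~\ref{lem:nullset} shows that $\partial \pi$ is Lebesgue-null; translation invariance of the Lebesgue measure then makes $M + \partial \pi$ Lebesgue-null as well, and the standing absolute continuity of $\mu$ (i.e.\ $\mu_s = 0$) yields $\mu(M + \partial \pi) = 0$. Hence the integrand converges to $0$ for $\mu$-almost every $A$.

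Finally, the integrand is bounded by the constant $1$, which is $\mu$-integrable because $\mu$ is a probability measure, so the dominated convergence theorem gives $\int \abs{\mathbf 1_\pi(A - M) - \mathbf 1_\pi(A - M_k)}\, d\mu(A) \to 0$; both displayed limits then vanish, since each set sits inside the symmetric difference. I expect the only genuinely delicate point to be the almost-everywhere pointwise convergence, which rests entirely on the negligibility of $\partial \pi$. Once Lemma~\ref{lem:nullset} and absolute continuity are combined to discard $M + \partial\pi$, the remainder is a routine application of dominated convergence.
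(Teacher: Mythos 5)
Your proof is correct and follows essentially the same route as the paper's: both arguments rest on pointwise convergence of the indicator functions away from $\partial\pi$, on the $\mu$-negligibility of $\partial\pi$ obtained from Lemma~\ref{lem:nullset} together with absolute continuity, and on (dominated) convergence of the resulting integrals. The only cosmetic differences are that you translate the sets via $\mathcal S_M = M + \pi$ rather than shifting $\mu$ so that $M=0$, you phrase the pointwise step topologically (continuity of the indicator of a closed set off its boundary) instead of through the cone-order characterization of $\interior\pi$, and you handle both limits at once through the symmetric difference, where the paper proves the first limit and notes the second is analogous.
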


\begin{proof}
\added{We only prove the first equation~\eqref{eq:lim.first.new} because
the second one can be proved in a similar way. By shifting the
point~$M$ to the origin and also $\mu$ accordingly, without loss of
generality we can assume that $M=0$. In this case $\mathcal S_M =
\pi$. Notice that the boundary $\partial \pi$ is a null set with
respect to $\mu$ by Lemma~\ref{lem:nullset}. Therefore it is
sufficient to show that, for every $A\in \mathcal M \backslash
\partial \pi \subset \interior \pi$,
\begin{equation}\label{eq:SMSM=0.new}
\lim_{k\to\infty} \chi_{\pi \backslash \mathcal S_{M_k}}(A) = 0, 
\end{equation}
where $\chi_S$ denotes the characteristic function for a set~$S$.
Since $A \in \interior \pi$ we have $A>^K 0$ by \eqref{eq:intpi(K)}.
Therefore the origin is in the open set~$G = \{B \in
\mathbb{R}^{n\times n}: B <^K A\} \added{ = A - \interior \pi}$. Hence,
since $\{M_k\}_{k=1}^\infty$ converges to $0$, if $k$ is sufficiently
large then $M_k$ is in $G$, i.e.,  $A >^K M_k$, which shows $A\in
\mathcal S_{M_k}$. Thus \eqref{eq:SMSM=0.new} actually holds.}
\end{proof}

\added{Then we can prove the next proposition.}

\begin{proposition} 
\added{If $0< r < r_0$ then $\mathcal M_r \in \mathfrak M$.}
\end{proposition}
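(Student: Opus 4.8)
The plan is to verify the defining property of $\mathfrak M$ directly. Since $\mathcal M_r = (\pi_r \cap \supp \mu_c) \cup \supp \mu_s$ is nonempty (by the preceding lemma, as $r < r_0$), compact, hence measurable, and contained in $\mathcal M = (\supp \mu_c) \cup (\supp \mu_s)$, the only thing to check is the quantitative requirement: fix an arbitrary $\epsilon > 0$ and produce a single $\delta > 0$ with $\mu(\mathcal S_{(1-\epsilon)M}) \geq \delta$ for every $M \in \mathcal M_r$. I would exploit the natural splitting of $\mathcal M_r$ into the finite atom set $\supp \mu_s$ and the compact continuous piece $\mathcal C := \pi_r \cap \supp \mu_c$, bound each separately, and then take $\delta$ to be the smaller of the two bounds.

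For the atoms $\supp \mu_s = \{M_1, \dotsc, M_N\}$ the estimate is immediate. Each $M_i$ lies in $\mathcal M$, which leaves $K$ invariant, so $M_i \geq^K 0$; hence $\epsilon M_i \geq^K 0$ and therefore $M_i \geq^K (1-\epsilon)M_i$, i.e. $M_i \in \mathcal S_{(1-\epsilon)M_i}$. This gives $\mu(\mathcal S_{(1-\epsilon)M_i}) \geq \mu(\{M_i\}) \geq p_i$, and since there are only finitely many atoms, $\delta_s := \min_i p_i > 0$ works uniformly over $\supp \mu_s$.

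The geometric core is the continuous piece $\mathcal C$. Here I would use that membership in $\pi_r$ keeps $M$ at distance at least $r$ from $\partial \pi$, and that, because $\pi$ is a cone, the scaled matrix $\epsilon M$ stays at distance at least $\epsilon r$ from $\partial \pi$. Writing $\mathcal S_{(1-\epsilon)M} = (1-\epsilon)M + \pi$ and noting the identity $A - (1-\epsilon)M = \epsilon M + (A - M)$, this distance estimate shows that the open ball of radius $\epsilon r$ about $M$ is contained in $\mathcal S_{(1-\epsilon)M}$, whence $\mu(\mathcal S_{(1-\epsilon)M}) \geq \mu_c\bigl(\{A : \|A - M\| < \epsilon r\}\bigr)$. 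Since $M \in \supp \mu_c$, this quantity is strictly positive for each individual $M$.

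The remaining and main difficulty is \emph{uniformity}: the above ball measure is positive for each $M$ but could a priori degenerate to $0$ along $\mathcal C$, whereas the definition of $\mathfrak M$ demands the same $\delta$ for all $M$. I would close this gap with compactness of $\mathcal C$ (available because $\mu$ has bounded support). One clean route is a finite subcover: cover $\mathcal C$ by the balls of radius $\epsilon r / 2$ centered at its points, extract a finite subcover centered at $M_1, \dotsc, M_m \in \mathcal C$, and set $\delta_0 := \min_j \mu_c\bigl(\{A : \|A - M_j\| < \epsilon r/2\}\bigr) > 0$; for any $M \in \mathcal C$ lying in the $(\epsilon r/2)$-ball about some $M_j$, the triangle inequality shows that ball is contained in the $(\epsilon r)$-ball about $M$, so the bound $\delta_0$ propagates. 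Alternatively, one can invoke Lemma~\ref{lem:SMSM.new} (applied to the absolutely continuous $\mu_c$) to get continuity of $M \mapsto \mu_c(\mathcal S_{(1-\epsilon)M})$, which then attains a minimum on the compact set $\mathcal C$, positive by the ball inclusion above. Either way, $\delta := \min(\delta_0, \delta_s)$ finishes the proof; the point to watch throughout is precisely that compactness is what upgrades pointwise positivity to the required uniform lower bound.
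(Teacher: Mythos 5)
Your proof is correct, and its core differs from the paper's in an interesting way. The treatment of the atoms is identical (both use $M_i \geq^K (1-\epsilon)M_i$ to get the uniform bound $\min_i p_i$), but for the continuous piece the paper proceeds ``softly'': it defines $\phi(M) := \mu(\mathcal S_{(1-\epsilon)M})$, proves that $\phi$ is \emph{continuous} on the compact set $\mathcal M_r$ by invoking Lemma~\ref{lem:SMSM.new} (which in turn rests on Lemma~\ref{lem:nullset}, the null-set property of $\partial\pi$), proves pointwise positivity from $M >^K (1-\epsilon)M$ and the definition of support, and then lets compactness upgrade this to a uniform lower bound. You instead make \emph{quantitative} use of the defining property of $\pi_r$: since $d(M,\partial\pi)\geq r$ and $\pi$ is a cone, $d(\epsilon M, \partial\pi) \geq \epsilon r$, whence the explicit ball inclusion $\{A : \norm{A-M} < \epsilon r\} \subset \mathcal S_{(1-\epsilon)M}$, and uniformity comes from a finite subcover at scale $\epsilon r/2$ plus the triangle inequality. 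Your route buys two things: it bypasses Lemma~\ref{lem:SMSM.new} entirely (so the convergence argument there, and its dependence on the boundary being a $\mu$-null set, would not be needed for this proposition), and it produces an explicit radius, i.e., a constructive reason for the uniformity; note that the paper's positivity argument uses only $M \in \interior\pi$ and never exploits the distance $r$ quantitatively, relying on $\pi_r$ solely for compactness. The paper's route, conversely, is softer and isolates the continuity of $M \mapsto \mu(\mathcal S_{(1-\epsilon)M})$ as a reusable fact. Your closing remark correctly identifies the paper's alternative, so the two arguments agree on where the real difficulty lies: converting pointwise positivity into the uniform bound demanded by the definition of $\mathfrak M$.
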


\begin{proof}
\added{Fix $\epsilon > 0$ and $r>0$.  Define $\phi \colon \mathcal M_r
\to \mathbb{R}$ by $\phi(M)  := \mu(\mathcal S_{(1-\epsilon)M})$.
First assume that $\mu$ is absolutely continuous with respect to the
Lebesgue measure, i.e., $\mu_s = 0$. Since $\mathcal M_r$ is compact,
it is sufficient to show that $\phi$ is continuous and positive. In
order to show that $\phi$ is continuous at~$M \in \mathcal M_r$, let
$\{M_k \}_{k=1}^\infty$ be a sequence of $\mathcal M_r$ converging to
$M$. Then we can see that
\begin{equation*}
\abs{\phi(M) -  \phi(M_k)}
\leq
\mu(\mathcal S_{(1-\epsilon)M} \backslash \mathcal S_{(1-\epsilon)M_k} ) 
+ 
\mu(\mathcal S_{(1-\epsilon)M_k} \backslash \mathcal S_{(1-\epsilon)M} ), 
\end{equation*}
which converges to $0$ as $k\to\infty$ by Lemma~\ref{lem:SMSM.new}.
Therefore $\phi$ is continuous. Then let us show that $\phi(M)>0$.
Since $\mathcal M_r \subset \interior \pi$ we have $M>^K 0$ and
therefore $M >^K (1-\epsilon)M$. This shows that the open
set~$\interior \mathcal S_{(1-\epsilon)M}$ and $\supp \mu$ has a
nonempty intersection containing $M$. Therefore $\mu(\interior
\mathcal S_{(1-\epsilon)M}) > 0$ and hence $\phi(M) \geq \mu(\interior
\mathcal S_{(1-\epsilon)M}) >0$, as desired.}

\added{Then we consider the general case. Decompose $\mu$ as
\eqref{eq:decomp}. On the one hand, from the above argument, there
exists a constant~$\delta_c > 0$ such that $\mu_c(\mathcal
S_{(1-\epsilon)M}) \geq \delta_c$ for every $M \in \pi_r \cap \supp
\mu_c$. On the other hand, if $M \in \supp \mu_s$ then $M = M_i \geq^K
0$ for some $1\leq i\leq N$. Therefore $M_i \in \mathcal
S_{(1-\epsilon)M}$ because $M_i \geq^K (1-\epsilon)M$. Hence $\phi(M)
\geq \mu(\{M_i \}) =  p_i > 0$. Thus we can see that $\delta =
\min(\delta_c, p_1, \dotsc, p_N)$ is a desired constant. This argument
is valid even when $\mu_c = 0$ and therefore completes the proof.}
\end{proof}

\added{This proposition shows $\hat \rho(\mathcal M_r) \leq
\lim_{p\to\infty} \rho_{p, \mu}$ for every $0<r<r_0$ by
Proposition~\ref{prop:frakM}. Therefore, if we could show
\begin{equation}\label{eq:hatrholim}
\hat \rho(\mathcal M) = \lim_{r\to 0} \hat \rho(\mathcal M_r)
\end{equation}
then we can complete the proof of
\change{Theorem~{\ref{thm:oppst}}}{inequality \eqref{eq:main:oppst}}.
The rest of this section is devoted to the proof of
\eqref{eq:hatrholim}. For the proof we will need the next lemma.}

\begin{lemma}
\changed{Let $K$ be a proper cone. If $\mu$ satisfies
{\ref{item:mu.nice}} then, for}{For} every $A\in \mathcal M$
 \added{it holds that}
\begin{equation}\label{eq:SMSM}
\lim_{r \to 0}d(A, \mathcal M_r) = 0.
\end{equation}
\end{lemma}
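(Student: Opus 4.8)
The plan is to fix $A \in \mathcal M = (\supp \mu_c) \cup (\supp \mu_s)$ and to produce, for each $\varepsilon > 0$, a threshold $\rho > 0$ with $d(A, \mathcal M_r) < \varepsilon$ whenever $0 < r < \rho$; since $d(A, \mathcal M_r) \ge 0$, this immediately gives $\lim_{r\to 0} d(A, \mathcal M_r) = 0$. I would first dispose of two easy cases. If $A \in \supp \mu_s$, then $A \in \mathcal M_r$ for every $r$, so the distance is identically zero. If $A \in \supp \mu_c$ with $A \in \interior \pi$, then $d(A, \partial \pi) > 0$, so $A \in \pi_r \cap \supp \mu_c \subset \mathcal M_r$ as soon as $r < d(A, \partial \pi)$, and again the distance vanishes for small $r$. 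The only genuine case is $A \in \supp \mu_c \cap \partial \pi$.

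For this case the key point is that, although $A$ lies on $\partial \pi$ and hence in no $\pi_r$, it can be approximated by interior points that \emph{still belong to} $\supp \mu_c$. Fix $\varepsilon > 0$ and set $U := \{B : \norm{A - B} < \varepsilon\}$. Because $A \in \supp \mu_c$ we have $\mu_c(U) > 0$. Since $\mu_c$ is absolutely continuous and $\partial \pi$ is a Lebesgue-null set by Lemma~\ref{lem:nullset}, we have $\mu_c(\partial \pi) = 0$; combined with $\supp \mu_c \subset \mathcal M \subset \pi$ this yields $\mu_c(U \cap \interior \pi) = \mu_c(U) > 0$. A set of positive $\mu_c$-measure must intersect $\supp \mu_c$, so there exists $B \in U \cap \interior \pi \cap \supp \mu_c$.

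Having produced such a $B$, I would conclude as follows. Since $B \in \interior \pi$ is an interior point, $d(B, \partial \pi) =: \rho > 0$, so $B \in \pi_r \cap \supp \mu_c \subset \mathcal M_r$ for every $r < \rho$, whence $d(A, \mathcal M_r) \le \norm{A - B} < \varepsilon$ for all such $r$. This is exactly the estimate sought. The main obstacle is precisely this boundary case: one cannot push $A$ into $\interior \pi$ by taking a convex combination with a fixed $K$-positive matrix, because the resulting point need not lie in $\supp \mu_c$, and $\mathcal M_r$ only retains supported points. It is the absolute continuity of $\mu_c$ together with the nullity of $\partial \pi$ that forces supported points to exist arbitrarily close to $A$ inside the interior of the cone, which is what makes the approximation go through.
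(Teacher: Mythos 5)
Your proof is correct and follows essentially the same route as the paper: after disposing of the trivial case $A \in \supp \mu_s$, you use Lemma~\ref{lem:nullset} (nullity of $\partial\pi$) together with the absolute continuity of $\mu_c$ to find a point of $\supp\mu_c \cap \interior\pi$ within $\varepsilon$ of $A$, which then belongs to $\mathcal M_r$ for all sufficiently small $r$ --- exactly the paper's ball argument. The only differences are organizational: you split off $A \in \interior\pi$ as a separate trivial subcase and work with $\mu_c$ directly, whereas the paper first proves the case $\mu_s = 0$ and then reduces the general case to it by normalizing $\mu_c$ to a probability measure.
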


\begin{proof}
Let $A \in \mathcal M$ be arbitrary.\delwsed\delled{Notice that the set
$\mathcal M_r$ is decreasing with respect to $r$ so that the
function~$d(A, \mathcal M_r)$ of~$r$ is increasing. Therefore the
limit in {\eqref{eq:SMSM}} is well defined.} First we suppose $\mu_s =
0$. Let $\epsilon>0$ be arbitrary and let $\mathcal B$ denote the open
ball in $\mathbb{R}^{n\times n}$ with center~$A$ and
radius~$\epsilon$. Since $\mu(\mathcal B) > 0$ and the set
\changed{$\{A\} \cup \partial \pi$}{$\partial \pi$} has the Lebesgue
measure~$0$ by Lemma~\ref{lem:nullset}, $\mathcal B \cap \mathcal M$
is not contained in \changed{$\{A\} \cup \partial \pi$}{$\partial
\pi$}. Therefore we can take $M \in \mathcal B \cap \mathcal M$ that
\delled{is distinct from $A$ and}{\delwsed}is not in $\partial \pi$.
\added{Let $\delta:= d(M, \partial \pi) > 0$ and assume $r\leq \delta$.
Then $M \in \mathcal M_\delta \subset \mathcal M_r$ and therefore}
\delled{Now, if $r < d(B, \partial \pi)$ then we have}{\delwsed}$d(A,
\mathcal M_r) \leq \norm{A-M} < \epsilon$\delled{ because $M \in \mathcal
M_r$}. This shows \eqref{eq:SMSM} since $\epsilon>0$ was arbitrary.

Then let us consider the general case. If $A \in \supp \mu_c$ then,
since  $\mathcal M_r \supset \pi_r \cap \supp \mu_c$ we can see
$\lim_{r\to 0}d(A, \mathcal M_r) \leq \lim_{r\to 0} d(A, \pi_r \cap
\supp \mu_c ) = 0$, where in the last equation we applied the above
argument for the special case $\mu_s = 0$ to the normalized
probability measure $\mu_c/(\mu_c(\mathbb{R}^{n\times n}))$. On the
other hand, if $A \in \supp \mu_s$ then \eqref{eq:SMSM} clearly holds
because $A$ is also in $\mathcal{M}_r$ and hence $d(A, \mathcal{M}_r)
= 0$ for every $r$. This completes the proof.
\end{proof}

\changed{Using this lemma we can prove the following proposition on the
Hausdorff distance~{\eqref{eq:Hmetric}} between $\mathcal M$ and
$\mathcal M_r$}{Now we can prove \eqref{eq:hatrholim}}.

\delxed{\begin{proposition} \label{prop:limHausdMetric}
\delled{If $\mu$ satisfies {\ref{item:mu.nice}} then}
\begin{equation} \label{eq:limHausdMet}
\stmathed{\lim_{r\to 0} H(\mathcal M,  \mathcal M_r) = 0.}
\end{equation}
\end{proposition}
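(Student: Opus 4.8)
The plan is to split the Hausdorff distance in~\eqref{eq:Hmetric} into its two defining terms and treat them separately. Since $\mathcal M_r = (\pi_r \cap \supp \mu_c) \cup \supp \mu_s$ is by construction a subset of $\mathcal M = (\supp \mu_c)\cup(\supp\mu_s)$, every $B\in\mathcal M_r$ satisfies $d(B,\mathcal M)=0$, so the term $\max_{B\in\mathcal M_r} d(B,\mathcal M)$ vanishes identically. Hence it remains only to show that
\[
\lim_{r\to 0}\ \max_{A\in\mathcal M} d(A,\mathcal M_r) = 0,
\]
that is, to upgrade the pointwise convergence $d(A,\mathcal M_r)\to 0$ supplied by the preceding lemma (equation~\eqref{eq:SMSM}) to convergence that is uniform in $A$.

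The natural tool for this upgrade is Dini's theorem. First I would record that, for each fixed $r<r_0$, the function $f_r\colon\mathcal M\to\mathbb{R}$ defined by $f_r(A):=d(A,\mathcal M_r)$ is well defined, because $\mathcal M_r$ is nonempty by the earlier lemma, and is $1$-Lipschitz and therefore continuous; here I use that $\mathcal M=\supp\mu$ is compact, which holds because $\mu$ has bounded support. Next I would observe the monotonicity of the family: since $\pi_r$ shrinks as $r$ grows, $\mathcal M_r$ is decreasing in $r$, and consequently $f_r(A)$ is nondecreasing in $r$ for each $A$. The preceding lemma then gives $f_r(A)\downarrow 0$ pointwise as $r\to 0$, with the limit function (identically zero) continuous.

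With these facts assembled, I would fix an arbitrary sequence $r_k\downarrow 0$ and apply Dini's theorem to the decreasing sequence of continuous functions $f_{r_k}$ on the compact space $\mathcal M$, obtaining $\max_{A\in\mathcal M} f_{r_k}(A)\to 0$. Because $r\mapsto\max_{A\in\mathcal M} f_r(A)$ is itself monotone in $r$ by the monotonicity of $\mathcal M_r$, convergence along this single sequence forces convergence as $r\to 0$, which yields the required uniform estimate and hence the claimed limit.

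The step I expect to be the crux is precisely this passage from the pointwise statement of the preceding lemma to the uniform one demanded by the Hausdorff metric; everything rests on verifying the hypotheses of Dini's theorem. The two points needing genuine attention are the continuity of each $f_r$, which is routine via the Lipschitz property of the distance-to-a-set, and the monotonicity of $\mathcal M_r$ in $r$, which is immediate from the definition of $\pi_r$. Once these are in place, the compactness of $\mathcal M$ does the remaining work.
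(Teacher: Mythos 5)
Your proof is correct, and it reaches the conclusion by a genuinely different device than the paper. Both arguments share the same skeleton: since $\mathcal M_r \subset \mathcal M$, the second term in \eqref{eq:Hmetric} vanishes, so everything reduces to upgrading the pointwise limit \eqref{eq:SMSM} to the uniform statement $\max_{A\in\mathcal M} d(A,\mathcal M_r)\to 0$, and both exploit compactness of $\mathcal M$ together with nonemptiness of $\mathcal M_r$ for $r<r_0$. The paper performs this upgrade by hand, via contradiction: monotonicity of $r\mapsto H(\mathcal M,\mathcal M_r)$ guarantees the limit exists; assuming it exceeds some $\epsilon>0$, one picks witnesses $A_r\in\mathcal M$ with $d(A_r,\mathcal M_r)>\epsilon$, extracts a convergent subsequence $A_{r'_i}\to A\in\mathcal M$ by compactness, and the triangle inequality transfers the lower bound $\epsilon$ to the fixed point $A$, contradicting \eqref{eq:SMSM}. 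You instead package the compactness argument into Dini's theorem applied to the nonincreasing family $f_{r_k}=d(\cdot,\mathcal M_{r_k})$ along a sequence $r_k\downarrow 0$, and then use monotonicity in $r$ to pass from that single sequence to the full limit $r\to 0$. Your route is shorter and more conceptual, and it makes explicit exactly where the hypotheses enter: continuity of $f_r$ (the $1$-Lipschitz property of distance-to-a-set) and the monotonicity of $\mathcal M_r$ in $r$, the latter being essential for Dini. The paper's route is self-contained and, interestingly, uses monotonicity only to secure existence of the limit; its contradiction step would survive with a limsup in place of the limit. Both are complete proofs of the proposition.
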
}

\begin{proof}[\added{Proof of \eqref{eq:hatrholim}}]
\added{By Lemma~\ref{lem:basic.JSR} it is sufficient to show $\lim_{r\to
0} H(\mathcal M,  \mathcal M_r) = 0$.} \changed{Let us first see that
the limit in {\eqref{eq:limHausdMet}} is well defined. Clearly
$\mathcal M$ is in~$\mathfrak K(\mathbb{R}^{n\times n})$ because $\mu$
is assumed to have a bounded support. Then let us show that, if $r>0$
is sufficiently small, the set $\mathcal{M}_r$ also is in $\mathfrak
K(\mathbb{R}^{n\times n})$. Assume the contrary, i.e., $\mathcal{M}_r
= \emptyset$ for every $r>0$. Then it must be that $\mathcal M_c
\subset \partial\pi$ and $\mu_s = 0$. The latter condition shows that
$\mu_c$ is nonzero. Thus the former condition shows that the nonzero
and absolutely continuous measure $\mu_c$ is concentrated on the null
set $\partial\pi$, which is a contradiction. Therefore the distance
$H(\mathcal M,  \mathcal M_r)$ is well defined at least for a
sufficiently small $r$.}{Notice that the distance $H(\mathcal M,
\mathcal M_r)$ is well defined for each $r$.} \changed{Finally,
since}{Since} the set~$\mathcal M_r$ is
\changed{increasing}{decreasing} with respect to~$r$, the
distance~$H(\mathcal M, \mathcal M_r)$ is
\changed{decreasing}{increasing} with respect to~$r$ so that the
limit~$\lim_{r\to 0} H(\mathcal M, \mathcal M_r)$ does exist.
\changed{Now}{We} assume $\lim_{r\to 0} H(\mathcal M,  \mathcal M_r) >
0$ to derive a contradiction. In this case there exists $\epsilon>0$
such that $H(\mathcal M, \mathcal M_r) > \epsilon$ for every $r > 0$.
By the definition of the Hausdorff metric~\eqref{eq:Hmetric} this
implies $\max_{A\in \mathcal M} d(A, \mathcal M_r) > \epsilon$ because
$\mathcal M_r \subset \mathcal M$. Therefore there exists $A_r \in
\mathcal M$ such that $d(A_r, \mathcal M_r) > \epsilon$ for each $r >
0$. Now let $\{r_i\}_{i=1}^\infty$ be a positive sequence decreasingly
converging to $0$. Since $\mathcal M$ is compact, there exists a
subsequence~$\{r'_i \}_{i=1}^\infty$ of~$\{r_i  \}_{i=1}^\infty$ such
that $\{A_{r'_i} \}_{i=1}^\infty$ converges to some $A \in \mathcal
M$. Using the triangle inequality we can show
\begin{equation*}
\begin{aligned}
d(A, \mathcal M_{r'_i})
& \geq 
d(A_{r'_i}, \mathcal M_{r'_i}) - d(A_{r'_i}, A) 
\\
&>
\epsilon - d(A_{r'_i}, A)
\end{aligned}
\end{equation*}
and hence $\limsup_{r\to0} d(A, \mathcal M_r) \geq \epsilon$, which
contradicts to \eqref{eq:SMSM}.
\end{proof}

%

\delled{To prove another proposition we need the next lemma. }

\delxed{\begin{lemma}\label{lem:SMSM}
\delled{Let $K$ be a proper cone in $\mathbb{R}^n$. For $M\in
\mathbb{R}^{n\times n}$ and define}
\begin{equation*}
\stmathed{\mathcal S_M := \{A\in\mathbb{R}^{n\times n} : A \geq^K M \}.}
\end{equation*}
\delled{Let $\mu$ be a probability distribution on~$\mathbb{R}^{n\times n}$
that is absolutely continuous with respect to the Lebesgue measure. If
a sequence~$\{M_k \}_{k=1}^\infty \subset \mathbb{R}^{n\times n}$
converges to~$M$ then}
\begin{align}
\stmathed{\lim_{k\to\infty} \mu(\mathcal S_{M}\backslash \mathcal S_{M_k}) }
&
\stmathed{= 0,} 
\label{eq:lim.first}
\\
\stmathed{\lim_{k\to\infty} \mu(\mathcal S_{M_k}\backslash \mathcal S_M) }
&
\stmathed{= 0.} 
\notag
\end{align}
\end{lemma}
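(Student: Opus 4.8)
The plan is to notice that $\mathcal S_M = \{A : A - M \geq^K 0\} = M + \pi$ is merely a translate of the matrix cone $\pi$, to reduce the statement to the case $M = 0$ by translation, and then to conclude by the dominated convergence theorem once the relevant indicator functions are shown to converge $\mu$-almost everywhere. The two facts I would lean on are that $\partial\pi$ is Lebesgue-null (Lemma~\ref{lem:nullset}), hence $\mu$-null since $\mu$ is absolutely continuous, and that $\interior\pi = \{A : A >^K 0\}$ is open by \eqref{eq:intpi(K)}.

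First I would reduce to $M = 0$. Substituting $B = A - M$ replaces $\mu$ by its pushforward under $A \mapsto A - M$, which is again absolutely continuous, replaces $\mathcal S_M$ by $\pi$, and replaces each $M_k$ by $M_k - M$, a sequence tending to $0$. Thus it suffices to treat the case $M = 0$, where $\mathcal S_0 = \pi$ and $M_k \to 0$, and to show $\mu(\pi \setminus \mathcal S_{M_k}) \to 0$ and $\mu(\mathcal S_{M_k} \setminus \pi) \to 0$.

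For the first limit I would show that the indicator function $\chi_{\pi \setminus \mathcal S_{M_k}}$ tends to $0$ pointwise $\mu$-almost everywhere. A point $A$ of $\pi$ lies either in $\partial\pi$, a $\mu$-null set I may discard, or in $\interior\pi$, i.e. $A >^K 0$. In the latter case the open set $G = \{B : B <^K A\} = A - \interior\pi$ contains $0$, so $M_k \in G$ for all large $k$; this gives $A >^K M_k$, hence $A \in \mathcal S_{M_k}$ and $\chi_{\pi\setminus\mathcal S_{M_k}}(A) = 0$ eventually. Since these indicators are dominated by the integrable function $\chi_\pi$, dominated convergence yields $\mu(\pi \setminus \mathcal S_{M_k}) \to 0$. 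The second limit is symmetric: for $A$ outside the closed cone $\pi$ the complement is open, so $A - M_k \notin \pi$ for large $k$, giving $\chi_{\mathcal S_{M_k}\setminus\pi}(A) \to 0$; points of $\interior\pi$ never belong to $\mathcal S_{M_k}\setminus\pi$, and $\partial\pi$ is again negligible, so dominated convergence finishes this case too.

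The step requiring the most care, and the only place where absolute continuity of $\mu$ is genuinely used, is the treatment of the boundary $\partial\pi$. Without absolute continuity, $\mu$ could place positive mass on $\partial\pi$, where the indicators need not converge at all: a matrix $A \in \partial\pi$ can be approximated by $M_k$ in a way that keeps flipping the membership $A \in \mathcal S_{M_k}$, so the conclusion would simply fail. It is precisely Lemma~\ref{lem:nullset} together with absolute continuity that renders this bad set negligible and thereby licenses the dominated convergence argument; everything else is the routine openness argument for points of $\interior\pi$.
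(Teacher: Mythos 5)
Your proposal is correct and follows essentially the same route as the paper's proof: translate $M$ to the origin (shifting $\mu$ accordingly), discard the boundary $\partial\pi$ using Lemma~\ref{lem:nullset} together with absolute continuity of $\mu$, and for $A\in\interior\pi$ use the open set $G=A-\interior\pi$ containing $0$ to conclude $A\in\mathcal S_{M_k}$ eventually, finishing by dominated convergence. The only cosmetic difference is that you spell out the symmetric argument for the second limit (via openness of the complement of the closed cone $\pi$), whereas the paper simply remarks that it "can be proved in a similar way."
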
}

\delxed{\begin{proof}
\delled{By shifting the point~$M$ to the origin and also $\mu$
accordingly, without loss of generality we can assume that $M=0$. In
this case $\mathcal S_M = \pi$. We only prove the first
equation~{\eqref{eq:lim.first}} because the second one can be proved
in a similar way. Notice that the boundary $\partial \pi$ is a null
set with respect to $\mu$ by Lemma~{\ref{lem:nullset.old}}. Therefore
it is sufficient to show that, for every $A\in \mathcal M \backslash
\partial \pi$,}
\begin{equation}\label{eq:SMSM=0}
\stmathed{\lim_{k\to\infty} \chi_{\pi \backslash \mathcal S_{M_k}}(A) = 0,} 
\end{equation}
\delled{where $\chi_S$ denotes the characteristic function for a set~$S$.
Since $\pi$ is closed, $A$ is either not in~$\pi$ or in~$\interior
\pi$. In the former case {\eqref{eq:SMSM=0}} clearly holds. Next
assume $A \in \interior \pi$. Then $A>^K 0$ by {\eqref{eq:intpi(K)}}
so that the origin $0$ is in the open set~$G := \{B \in
\mathbb{R}^{n\times n}: B <^K A\}$. Therefore, since
$\{M_k\}_{k=1}^\infty$ converges to $0$, if $k$ is sufficiently large
then $M_k$ is in $G$, i.e.,  $A >^K M_k$ and therefore $A\in \mathcal
S_{M_k}$. Thus {\eqref{eq:SMSM=0}} holds.}
\end{proof}}

\delled{Then we can prove the next proposition.}

\delxed{\begin{proposition}\label{prop:LowBound.old}
\delled{Assume that $\mu$ satisfies {\ref{item:invariant}} and
{\ref{item:mu.nice}}. Let $K$ be a proper cone left invariant by
$\mathcal M$. Then, for every $\epsilon>0$ and $r>0$, there exists
$\delta>0$ such that $\mu(\mathcal S_{(1-\epsilon)M})\geq \delta$ for
every $M\in \mathcal M_r$.}
\end{proposition}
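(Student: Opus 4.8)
The plan is to fix $\epsilon>0$ and $r>0$, introduce the function $\phi\colon \mathcal M_r \to \mathbb{R}$ defined by $\phi(M):=\mu(\mathcal S_{(1-\epsilon)M})$, and then produce the uniform lower bound by setting $\delta := \inf_{M\in\mathcal M_r}\phi(M)$ and showing this infimum is positive. I would first dispose of the absolutely continuous case $\mu_s=0$. Here $\mathcal M_r = \pi_r \cap \mathcal M$ is compact, being the intersection of the closed set $\pi_r$ with the bounded, closed support $\mathcal M$. On a compact set a continuous, strictly positive function attains a positive minimum, so it suffices to check that $\phi$ is continuous and that $\phi(M)>0$ for each $M\in\mathcal M_r$.

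For continuity at a point $M\in\mathcal M_r$, I would take any sequence $M_k\to M$ in $\mathcal M_r$ and use the symmetric-difference bound
\begin{equation*}
\abs{\phi(M)-\phi(M_k)} \leq \mu(\mathcal S_{(1-\epsilon)M}\setminus \mathcal S_{(1-\epsilon)M_k}) + \mu(\mathcal S_{(1-\epsilon)M_k}\setminus \mathcal S_{(1-\epsilon)M}).
\end{equation*}
Since $(1-\epsilon)M_k \to (1-\epsilon)M$, both terms on the right vanish as $k\to\infty$ by Lemma~\ref{lem:SMSM.new}, so $\phi$ is continuous. For positivity I would exploit the truncation $M\in\pi_r\subset\interior\pi$: by \eqref{eq:intpi(K)} this yields the strict inequality $M>^K 0$, whence $M-(1-\epsilon)M = \epsilon M>^K 0$, i.e.\ $M>^K(1-\epsilon)M$. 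Thus $M$ lies in the open set $\interior\mathcal S_{(1-\epsilon)M}$, which is therefore an open neighborhood of the support point $M$; by the defining property of $\supp\mu$ this open set carries positive $\mu$-measure, so $\phi(M)\geq \mu(\interior\mathcal S_{(1-\epsilon)M})>0$.

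For the general case I would decompose $\mu=\mu_c+\mu_s$ as in \eqref{eq:decomp} and bound $\phi$ below on the two pieces of $\mathcal M_r$ separately. Applying the argument of the previous paragraph to $\mu_c$ (after normalizing it to a probability measure, if $\mu_c\neq 0$) produces a constant $\delta_c>0$ with $\mu_c(\mathcal S_{(1-\epsilon)M})\geq\delta_c$ for every $M\in\pi_r\cap\supp\mu_c$, and hence $\phi(M)\geq\delta_c$ there. For $M\in\supp\mu_s$ we have $M=M_i$ for some $i$, and since $M_i\geq^K 0$ we get $M_i\geq^K(1-\epsilon)M_i$, so $M_i\in\mathcal S_{(1-\epsilon)M}$ and therefore $\phi(M)\geq\mu(\{M_i\})\geq p_i>0$. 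Taking $\delta:=\min(\delta_c,p_1,\dotsc,p_N)$ then gives the desired uniform bound, with the obvious simplifications when $\mu_c=0$ or $\mu_s=0$.

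The main obstacle is the uniformity of $\delta$ over the possibly infinite set $\mathcal M_r$: pointwise positivity of $\phi$ is elementary, but converting it into a single positive lower bound is exactly where compactness of $\pi_r\cap\mathcal M$ together with continuity of $\phi$ become indispensable. This is precisely why the truncation to $\pi_r$ is introduced. The parameter $r>0$ keeps the set compact, while the inclusion $\pi_r\subset\interior\pi$ simultaneously secures the strict relation $M>^K 0$ needed for positivity; without staying off the boundary $\partial\pi$, a support point could sit on $\partial\mathcal S_{(1-\epsilon)M}$ rather than in its interior, and the open-neighborhood argument for $\phi(M)>0$ would collapse.
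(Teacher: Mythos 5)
Your proof is correct and follows essentially the same route as the paper's own: the same function $\phi(M)=\mu(\mathcal S_{(1-\epsilon)M})$, the same compactness--continuity--positivity argument on $\mathcal M_r$ using Lemma~\ref{lem:SMSM.new} in the absolutely continuous case, and the same decomposition $\mu=\mu_c+\mu_s$ with $\delta=\min(\delta_c,p_1,\dotsc,p_N)$ in the general case. The only difference is cosmetic: you spell out why $M>^K(1-\epsilon)M$ places $M$ in $\interior\mathcal S_{(1-\epsilon)M}$ and why $\pi_r\cap\mathcal M$ is compact, steps the paper leaves implicit.
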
}

\delxed{\begin{proof}
\delled{Fix $\epsilon > 0$ and $r>0$.  Define $\phi \colon \mathcal M_r
\to \mathbb{R}$ by $\phi(M)  := \mu(\mathcal S_{(1-\epsilon)M})$.
First assume that $\mu$ is absolutely continuous with respect to the
Lebesgue measure, i.e., $\mu_s = 0$. Since $\mathcal M_r$ is compact,
to prove {\eqref{eq:muS}}, it is sufficient to show that $\phi$ is
continuous and positive. In order to show that $\phi$ is continuous
at~$M \in \mathcal M_r$, let $\{M_k \}_{k=1}^\infty$ be a sequence of
$\mathcal M_r$ converging to $M$. Then we can see that}
\begin{equation*}
\stmathed{\abs{\phi(M) -  \phi(M_k)}
\leq
\mu(\mathcal S_{(1-\epsilon)M} \backslash \mathcal S_{(1-\epsilon)M_k} ) 
+ 
\mu(\mathcal S_{(1-\epsilon)M_k} \backslash \mathcal S_{(1-\epsilon)M} ).}
\end{equation*}
\delled{Therefore $\phi$ is continuous by Lemma~{\ref{lem:SMSM}}. Then
let us show that $\phi$ is positive at~$M$. Since $\mathcal M_r
\subset \interior \pi$ we have $M>^K 0$ and therefore $M >^K
(1-\epsilon)M$. This gives $(\interior \mathcal S_{(1-\epsilon)M})
\cap \supp \mu \ni M$ and thus $\phi(M) \geq \mu(\interior \mathcal
S_{(1-\epsilon)M}) >0$.}
\\
\delled{Then we consider the general case. Decompose $\mu$ as
{\eqref{eq:decomp}}. On the one hand, from the above argument, there
exists a constant~$\delta_c > 0$ such that $\mu_c(\mathcal
S_{(1-\epsilon)M}) \geq \delta_c$ for every $M \in \mathcal M_c \cap
\pi_r$. On the other hand, if $M \in \mathcal M_s$ then $M = M_i
\geq^K 0$ for some $1\leq i\leq N$. Therefore $M_i \in \mathcal
S_{(1-\epsilon)M}$ because $M_i \geq^K (1-\epsilon)M$. Hence $\phi(M)
\geq \mu(\{M_i \}) =  p_i > 0$. Thus we can see that $\delta :=
\min(\delta_c, p_1, \dotsc, p_N) > 0$ is a desired constant. This
argument is valid even when $\mu_c = 0$ and therefore completes the
proof.}
\end{proof}}

\delled{Now we are at the position to prove the main result of
Theorem~{\ref{thm:Limit}}.}

\delxed{\begin{proof}[Proof of Theorem~\ref{thm:Limit}]
\delled{It is sufficient to show }\delled{$\hat \rho(\mathcal M) =
\lim_{p\to\infty}\rho_{p, \mu}$}\delled{ by {\ref{item:invariant}} and
{\eqref{eq:p-radformula}}. First notice that the definitions of
$p$-radius and joint spectral radius immediately show $\rho_{p,\mu} 
\leq  \hat \rho(\mathcal M)$. Therefore $\lim_{p\to\infty}\rho_{p,
\mu} \leq \hat \rho (\mathcal M)$ because $\rho_{p, \mu}$ is
non-decreasing with respect to $p$ by the first statement of
Lemma~{\ref{lem:basic.p-rad}}.}
\\
\delled{To show the opposite direction let $r>0$ be arbitrary and let
$\gamma_r := \hat \rho (\mathcal M_r)$. Let $K$ be a proper cone left
invariant by $\mathcal M$ and take any cone linear absolute
norm~$\norm{\cdot}$ with respect to~$K$. By
Proposition~{\ref{prop:AAS.char}}, there exist $C>0$ and
$\{M_k\}_{k=1}^\infty \subset \mathcal M_r$ such that}
\begin{equation}\label{eq:nealy}
\stmathed{\norm{M_k \dotsm M_1} > C\gamma_r^k}
\end{equation}
\delled{for infinitely many $k$. Take an arbitrary $\beta_r$ such that
$\gamma_r>\beta_r$ and define \mbox{$\epsilon :=
(\gamma_r-\beta_r)/\gamma_r$}. Let us take the corresponding
$\delta>0$ given by Proposition~{\ref{prop:LowBound.old}}. Observe that if
$A_i \in \mathcal S_{(1-\epsilon)M_i}$ then $A_i \geq^K (1-\epsilon)M_i =
(\beta_r/\gamma_r)M_i$ so that, by {\eqref{eq:ProdMono}} and
{\eqref{eq:nealy}}, we have  $\norm{A_k \dotsm A_1} \geq
(\beta_r/\gamma_r)^k  \norm{M_k\dotsm M_1} > C \beta_r^k$. Therefore}
\begin{equation*}
\begin{aligned}
\stmathed{E[\norm{A_k\dotsm A_1}^p]
> }&
\stmathed{ \left(C \beta_r^k\right)^p \mu^k\left(\{(A_1, \dotsc, A_k) : \norm{A_k\dotsm A_1} > C\beta_r^k\}\right) }
\\	
\stmathed{\geq} &
\stmathed{C^p \beta_r^{pk} \prod_{i=1}^k \mu( \mathcal S_{(1-\epsilon)M_i} ) }
\\
\stmathed{\geq}&
\stmathed{ C^p \beta_r^{pk} \delta^k}
\end{aligned}
\end{equation*}
\delled{and hence $E[\norm{A_k\dotsm A_1}^p]^{1/kp} > C^{1/k}
\delta^{1/p} \beta_r$ for infinitely many $k$. Taking the limit
$k\to\infty$ in this inequality shows $\rho_{p, \mu} \geq
\delta^{1/p}\beta_r$ by Remark~{\ref{rmk:normequiv}}. Thus we obtain
$\lim_{p\to\infty}\rho_{p,\mu}\geq \beta_r$. Since $\beta_r$ can be
made arbitrarily close to $\gamma_r$ we see
$\lim_{p\to\infty}\rho_{p,\mu}\geq \gamma_r$, which further shows
$\lim_{p\to\infty}\rho_{p,\mu}\geq \hat \rho (\mathcal M)$ since
$\gamma_r = \hat \rho(\mathcal M_r) \to \hat{\rho}(\mathcal M)$ as
$r\to 0$ by Lemma~{\ref{lem:basic.JSR}} and
Proposition~{\ref{prop:limHausdMetric}}. This completes the proof.}
\end{proof}}

\delxed{\delled{Finally the next corollary of Theorem~{\ref{thm:Limit}}
generalizes another characterization}~{\cite{Xu2011}}\delled{ of joint
spectral radius.}}

\delxed{\begin{corollary}
\delled{If $\mu$ is of the form~{\eqref{eq:mu_s}} then}
\begin{align}
\stmathed{\hat \rho (\mathcal M) }
&\stmathed{=
\lim_{p\to\infty} \rho( E[A^{\otimes (2p)}] )^{1/(2p)}} \label{eq:Limit.2p.old}
\\
&\stmathed{=
\limsup_{p\to\infty} \rho( E[A^{\otimes p}] )^{1/p}.} \label{eq:LimitSup.old}
\end{align}
\end{corollary}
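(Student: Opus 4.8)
The plan is to deduce the corollary from Theorem~\ref{thm:main:equiv} by a lifting trick, since a distribution $\mu$ of the form~\eqref{eq:mu_s} satisfies \ref{item:mu.nice} automatically but need not satisfy \ref{item:invariant}: the atoms $M_1, \dotsc, M_N$ need not share an invariant proper cone. To repair this, I would pass to the second lift $\mu^{[2]} = \sum_{i=1}^N p_i \delta_{M_i^{[2]}}$. This is again a finite combination of point masses, so \ref{item:mu.nice} persists, and by Lemma~\ref{lem:lifts} its support $\mathcal M^{[2]}$ leaves a proper cone invariant, so \ref{item:invariant} now holds. Theorem~\ref{thm:main:equiv} therefore applies to $\mu^{[2]}$ and yields $\hat\rho(\mathcal M^{[2]}) = \lim_{p\to\infty} \rho_{p,\mu^{[2]}}$.

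The first equality~\eqref{eq:Limit.2p} then follows by translating both sides back to $\mu$. On the left, \eqref{eq:p-rad[m]} gives $\hat\rho(\mathcal M^{[2]}) = \hat\rho(\mathcal M)^2$. On the right, \eqref{eq:p-rad.lift} with $m=2$ gives $\rho_{p,\mu^{[2]}} = \rho_{2p,\mu}^2$, and since $2p$ is even, Proposition~\ref{prop:p-rad} evaluates $\rho_{2p,\mu} = \rho(E[A^{\otimes(2p)}])^{1/(2p)}$, so that $\rho_{p,\mu^{[2]}} = \rho(E[A^{\otimes(2p)}])^{1/p}$. Substituting and taking square roots produces~\eqref{eq:Limit.2p}.

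For the second equality~\eqref{eq:LimitSup} I would combine the even-order limit just obtained with a squeeze that absorbs the odd orders. The universal bound $\rho(E[A^{\otimes p}])^{1/p} \le \rho_{p,\mu} \le \hat\rho(\mathcal M)$, where the first step is~\eqref{eq:p-rad.geq} and the second is immediate from the definitions, shows $\limsup_{p\to\infty}\rho(E[A^{\otimes p}])^{1/p} \le \hat\rho(\mathcal M)$. The reverse inequality needs only the even subsequence, which already converges to $\hat\rho(\mathcal M)$ by~\eqref{eq:Limit.2p}, forcing the limsup to be at least $\hat\rho(\mathcal M)$. In fact a sharper statement comes for free: chaining~\eqref{eq:p-rad.geq}, the monotonicity of $p$-radius (Lemma~\ref{lem:basic.p-rad}), and Proposition~\ref{prop:p-rad} yields $\rho(E[A^{\otimes(2p-1)}])^{1/(2p-1)} \le \rho_{2p-1,\mu} \le \rho_{2p,\mu} = \rho(E[A^{\otimes(2p)}])^{1/(2p)}$, so even the odd-order terms are squeezed to the same limit as $p\to\infty$.

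The routine content here is minor; the one genuinely load-bearing idea is the reduction to $\mu^{[2]}$, as this is exactly what converts the possibly cone-free hypothesis~\eqref{eq:mu_s} into a setting where \ref{item:invariant} holds and Theorem~\ref{thm:main:equiv} becomes applicable. The only point requiring care is that Proposition~\ref{prop:p-rad} provides an \emph{equality} for $\rho_{p,\mu}$ only when $p$ is even (or the support admits an invariant cone); this is precisely why the even subsequence must carry the argument, and why the odd orders are handled by the inequality~\eqref{eq:p-rad.geq} together with monotonicity rather than by a direct evaluation.
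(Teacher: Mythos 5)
Your proposal is correct and follows essentially the same route as the paper's own proof: the key step of lifting to $\mu^{[2]}$ so that Lemma~\ref{lem:lifts} supplies the invariant cone needed for Theorem~\ref{thm:main:equiv}, the translation back via \eqref{eq:p-rad[m]}, \eqref{eq:p-rad.lift}, and Proposition~\ref{prop:p-rad}, and the squeeze $\rho(E[A^{\otimes (2p-1)}])^{1/(2p-1)} \leq \rho_{2p-1,\mu} \leq \rho_{2p,\mu} = \rho(E[A^{\otimes (2p)}])^{1/(2p)}$ handling the odd orders are all exactly the paper's argument. No gaps.
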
}

\delxed{\begin{proof}
\delled{Let $K$ be the convex hull of $(\mathbb{R}^n)^{[2]}$, which is a
proper cone of $\mathbb{R}^{n_2}$ by Lemma~{\ref{lem:lifts}}. Then
$\supp(\mu^{[2]}) = (\supp \mu)^{[2]} $ clearly leaves $K$ invariant.
Also $\mu^{[2]}$ itself is a discrete measure consisting of finitely
many point masses. Therefore $\mu^{[2]}$ satisfies
{\ref{item:invariant}} and {\ref{item:mu.nice}}.}
\\
\delled{Now, {\eqref{eq:p-rad[m]}} shows}
\begin{equation}\label{eq:p-rad[2].old}
\stmathed{\hat{\rho}(\mathcal M) = \hat \rho(\mathcal M^{[2]})^{1/2}. }
\end{equation}
\delled{Also {\eqref{eq:p-rad.lift}} and Proposition~{\ref{prop:p-rad}}
yield $\rho_{p, \mu^{[2]}}^{1/2} =\rho(E[A^{\otimes (2p)}])^{1/(2p)}$.
Therefore Theorem~{\ref{thm:Limit}} shows $\hat \rho(\mathcal
M^{[2]})^{1/2} =\lim_{p\to\infty} \rho(E[A^{\otimes (2p)}])^{1/(2p)}$.
This identity and {\eqref{eq:p-rad[2].old}} prove
{\eqref{eq:Limit.2p.old}}. Then let us show {\eqref{eq:LimitSup.old}}.
Using the inequality~{\eqref{eq:p-rad.geq}}, the monotonicity of
$p$-radius (Lemma~{\ref{lem:basic.p-rad}}), and
Proposition~{\ref{prop:p-rad}}, we can show}
\begin{equation*}
\begin{aligned}
\stmathed{\rho(E[A^{\otimes (2p-1)}])^{1/(2p-1)}}
&\stmathed{\leq
\rho_{2p-1, \mu} }
\\
&\stmathed{\leq
\rho_{2p, \mu} }
\\
&\stmathed{=
\rho(E[A^{\otimes (2p)}])^{1/(2p)}. }
\end{aligned}
\end{equation*}
\delled{This inequality and {\eqref{eq:Limit.2p.old}} prove the
equation~{\eqref{eq:LimitSup.old}}.}
\end{proof}}

\section{Lyapunov Theorem for Switched Linear Systems\label{sec:ConvLypThm}}

As a theoretical application of \changed{Theorem~{\ref{thm:Limit}}}{the
limit formulas obtained in the last section}, in this section we show
a novel characterization of the absolute exponential stability of the
switched linear system~$\Sigma_{\mathcal M}$ \added{defined by
\eqref{eq:SigmaM}} via so-called stochastic Lyapunov functions. We
will also investigate the construction of stochastic Lyapunov
functions. Define the stochastic switched linear system~$\Sigma_\mu$
by
\begin{equation*}
\Sigma_{\mu}: x(k+1) = A_{k} x(k),\ 
\mbox{$A_k$ follows $\mu$ independently}
\end{equation*}
where $x(0) = x_0 \in \mathbb{R}^n$ is a constant.  We say that
$\Sigma_\mu$ is {\it exponentially stable in $p$th mean} ({\it $p$th
mean stable} for short) if there exist $C>0$ and~$\gamma \in [0, 1)$
such that \changed{$E[\norm{x(k)}^p] \leq C \gamma^{pk}\norm{x_0}^p$}{
\begin{equation*} 
E[\norm{x(k)}^p] \leq C \gamma^{pk}\norm{x_0}^p
\end{equation*}
} for every
$x_0 \in \mathbb{R}^n$. We call $\gamma$ as a {\it growth rate of the
$p$th mean}. As is expected, $p$th mean stability is closely related
to $p$-radius.

\begin{proposition}[\cite{Ogura2012b}]\label{prop:StblCond}
$\Sigma_\mu$ is $p$th mean stable if and only if $\rho_{p, \mu} < 1$.
\delled{Moreover the infimum of the growth rate of the $p$th mean equals
$\rho_{p, \mu}$.}
\end{proposition}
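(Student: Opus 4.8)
The plan is to translate both directions into statements about the growth of the matrix moments $a_k := E[\norm{A_k \dotsm A_1}^p]$, where $A_1, A_2, \dotsc$ are independent copies following $\mu$, so that $\rho_{p,\mu} = \lim_{k\to\infty} a_k^{1/pk}$ by definition~\eqref{eq:defn:p-rad}. The preliminary step is to record the submultiplicativity $a_{k+m} \leq a_k a_m$: splitting $A_{k+m}\dotsm A_1 = (A_{k+m}\dotsm A_{m+1})(A_m \dotsm A_1)$, using submultiplicativity of the operator norm and raising to the $p$th power gives $\norm{A_{k+m}\dotsm A_1}^p \leq \norm{A_{k+m}\dotsm A_{m+1}}^p \norm{A_m\dotsm A_1}^p$, and since the two factors depend on disjoint, hence independent, blocks of the sequence, the expectation factors as $a_k a_m$. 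By Fekete's lemma the limit defining $\rho_{p,\mu}$ then exists and equals $\inf_k a_k^{1/pk}$.

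For the ``if'' direction, suppose $\rho_{p,\mu} < 1$ and fix $\gamma$ with $\rho_{p,\mu} < \gamma < 1$. Since $a_k^{1/pk} \to \rho_{p,\mu} < \gamma$, some $k_0$ satisfies $a_{k_0} < \gamma^{p k_0}$; writing an arbitrary $k$ as $k = q k_0 + s$ with $0 \leq s < k_0$ and iterating submultiplicativity yields $a_k \leq a_{k_0}^q a_s \leq \gamma^{pk}\,(\gamma^{-ps} a_s)$, so $a_k \leq C \gamma^{pk}$ with $C := \max_{0 \leq s < k_0} \gamma^{-ps} a_s$. Because the trajectory satisfies $x(k) = A_{k-1}\dotsm A_0 x_0$ and hence $\norm{x(k)} \leq \norm{A_{k-1}\dotsm A_0}\,\norm{x_0}$, taking $p$th powers and expectations gives $E[\norm{x(k)}^p] \leq a_k \norm{x_0}^p \leq C\gamma^{pk}\norm{x_0}^p$, which is exactly $p$th mean stability.

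The ``only if'' direction is where the real work lies, and I expect it to be the main obstacle: $p$th mean stability only controls $E[\norm{A_{k-1}\dotsm A_0\, x_0}^p]$ for each \emph{fixed} vector $x_0$, whereas $a_k$ involves the operator norm $\norm{A_{k-1}\dotsm A_0} = \sup_{\norm{v}=1}\norm{(A_{k-1}\dotsm A_0)v}$, and one cannot exchange this supremum with the expectation. To sidestep this I would fix, once and for all, a finite $1/2$-net $\{v_1, \dotsc, v_m\}$ of the Euclidean unit sphere; a standard successive-approximation estimate ($\norm{Bv}\leq\norm{Bv_i}+\tfrac12\norm{B}$ for the nearest net point) then gives the deterministic bound $\norm{B} \leq 2\max_i \norm{B v_i}$ for every matrix $B$, reducing the supremum to a finite maximum. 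Applying this with $B = A_{k-1}\dotsm A_0$ and using $\max_i \norm{Bv_i}^p \leq \sum_i \norm{Bv_i}^p$ yields $a_k \leq 2^p \sum_{i=1}^m E[\norm{Bv_i}^p]$.

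Finally, each summand is the $p$th-mean quantity of the trajectory started from $x_0 = v_i$, so $p$th mean stability bounds it by $C\gamma^{pk}\norm{v_i}^p = C\gamma^{pk}$; hence $a_k \leq 2^p m C\,\gamma^{pk}$ and therefore $\rho_{p,\mu} = \lim_k a_k^{1/pk} \leq \gamma < 1$. Throughout I use the Euclidean and associated operator norm, which is legitimate since the value of $\rho_{p,\mu}$ is independent of the chosen norm by Remark~\ref{rmk:normequiv}.
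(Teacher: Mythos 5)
Your proof is correct, but note that this paper does not actually prove Proposition~\ref{prop:StblCond}: it is imported without proof from \cite{Ogura2012b}, so there is no in-paper argument to compare against. On its own merits, your argument is sound and self-contained. The submultiplicativity $a_{k+m}\leq a_k a_m$ via independence of the two blocks is right (and Fekete's lemma even re-derives the existence of the limit in \eqref{eq:defn:p-rad}, which the paper only asserts for bounded support); the ``if'' direction by iterating submultiplicativity is standard and correct. The ``only if'' direction is indeed the nontrivial half, and your $1/2$-net bound $\norm{B}\leq 2\max_i\norm{Bv_i}$ correctly reduces the operator-norm supremum to finitely many fixed initial vectors, to each of which the $p$th mean stability estimate applies; together with $\max_i\norm{Bv_i}^p\leq\sum_i\norm{Bv_i}^p$ this gives $a_k\leq 2^p m C\gamma^{pk}$ and hence $\rho_{p,\mu}\leq\gamma<1$. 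The more common route in the literature (and the natural one given Remark~\ref{rmk:normequiv}) replaces the net by the standard basis: $\norm{B}\leq\sum_{j=1}^n\norm{Be_j}$, whence $\norm{B}^p\leq n^{p-1}\sum_j\norm{Be_j}^p$ by convexity, which yields the same conclusion with $m=n$ and no metric-entropy input; your net argument is a perfectly valid alternative and generalizes verbatim to any norm. One cosmetic remark: your construction also shows that every admissible growth rate $\gamma$ satisfies $\gamma\geq\rho_{p,\mu}$ and that every $\gamma>\rho_{p,\mu}$ is admissible, which is exactly the ``infimum of growth rates'' clause that was deleted from the statement, so nothing is lost.
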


Now we introduce the notion of stochastic Lyapunov
functions\delled{~[19, 20, 21]} for $\Sigma_\mu$. The
\change{folloiwng}{following} definition extends the ones in
\cite{Bertram1959,Ahmadi1979,Feng1992} by allowing us to study $p$th
mean stability for a general $p$.
\begin{definition}
We say that $V\colon\mathbb{R}^n \to \mathbb{R}$ is a {\it stochastic
Lyapunov function of degree $p$} for~$\Sigma_\mu$ if there exist
positive numbers $C_1, C_2$\delled{ and $\gamma\in [0, 1)$} such that
\begin{equation}\label{eq:def.pLyap.norm}
C_1\norm{x}^p \leq V(x) \leq C_2 \norm{x}^p
\end{equation}
and \added{$\gamma \in [0, 1)$ such that}
\begin{equation}\label{eq:def.pLyap.mono}
E[V(Ax)] \leq \gamma^p V(x)
\end{equation}
for every $x\in \mathbb{R}^n$\added{, where $A$ is a random variable
following $\mu$}. We say that $V$ has a {\it growth rate}~$\gamma$.
\end{definition}

The next theorem is the main result of this section and provides a
connection between the stability of deterministic switched linear
systems and that of stochastic switched linear systems.

\begin{theorem}\label{thm:AASChar}
Let $\mu$ be a probability distribution satisfying
\ref{item:invariant} and \ref{item:mu.nice}. Let $\mathcal M = \supp
\mu$. Then $\Sigma_{\mathcal M}$ is absolutely exponentially stable if
and only if there exists $\gamma<1$ such that, for every $p\geq 1$,
$\Sigma_\mu$ admits a stochastic Lyapunov function of degree $p$ with
growth rate at most $\gamma$.
\end{theorem}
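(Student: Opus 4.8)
The plan is to establish the two implications separately, the entire equivalence hinging on the limit formula of Theorem~\ref{thm:main:equiv}, which is exactly what permits trading the deterministic radius~$\hat\rho(\mathcal M)$ against the stochastic radii~$\rho_{p,\mu}$. Throughout I would use that $\rho_{p,\mu}\le\hat\rho(\mathcal M)$ for every~$p$, which is immediate from the definitions of the two radii.

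For necessity I would construct the Lyapunov functions explicitly. Assume $\Sigma_{\mathcal M}$ is absolutely exponentially stable, so $\hat\rho(\mathcal M)<1$ by Proposition~\ref{prop:AAS.char}. Fixing a single $\gamma$ with $\hat\rho(\mathcal M)<\gamma<1$ gives $\rho_{p,\mu}<\gamma$ for all $p$ at once. For each $p$, let $A,A_1,A_2,\dotsc$ be independent and $\mu$-distributed, and set
\[
V_p(x):=\sum_{k=0}^\infty\gamma^{-pk}\,E\bigl[\norm{A_k\cdots A_1 x}^p\bigr],
\]
the $k=0$ term being $\norm{x}^p$. Convergence holds because $\rho_{p,\mu}<\gamma$ forces $E[\norm{A_k\cdots A_1}^p]\le\beta^{pk}$ for large $k$ and some $\beta\in(\rho_{p,\mu},\gamma)$, so the tail is dominated by a geometric series; this also furnishes the upper bound $V_p(x)\le C_2\norm{x}^p$, while the $k=0$ term gives $V_p(x)\ge\norm{x}^p$. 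The decay estimate is the main computation: taking expectation over the independent copy $A$ and noting that $A_k\cdots A_1 A$ is again a product of $k+1$ i.i.d.\ copies of $\mu$, the index shift $j=k+1$ yields $E[V_p(Ax)]=\gamma^p\bigl(V_p(x)-\norm{x}^p\bigr)\le\gamma^p V_p(x)$. Hence each $V_p$ is a stochastic Lyapunov function of degree $p$, all sharing the common growth rate $\gamma<1$.

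For sufficiency I would reverse the argument and then invoke the limit formula. Given $\gamma<1$ and Lyapunov functions $V_p$ of every degree with growth rate at most $\gamma$, iterating \eqref{eq:def.pLyap.mono} along a trajectory and inserting the sandwich \eqref{eq:def.pLyap.norm} gives $E[\norm{A_k\cdots A_1 x}^p]\le(C_2/C_1)\gamma^{pk}\norm{x}^p$ for every $x$. Passing from this vectorwise bound to the operator norm (by finite-dimensional norm equivalence applied to the images of a fixed basis) and letting $k\to\infty$ produces $\rho_{p,\mu}\le\gamma$ for each $p$. Sending $p\to\infty$ and applying Theorem~\ref{thm:main:equiv} gives $\hat\rho(\mathcal M)=\lim_{p\to\infty}\rho_{p,\mu}\le\gamma<1$, so $\Sigma_{\mathcal M}$ is absolutely exponentially stable by Proposition~\ref{prop:AAS.char}.

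The convergence of the series and the sandwich bounds are routine. The two genuine points are the decay computation in the necessity part, where the independence structure and the index shift must be tracked carefully, and, in the sufficiency part, the indispensable use of Theorem~\ref{thm:main:equiv}: it is precisely the nontrivial inequality $\hat\rho(\mathcal M)\le\lim_{p}\rho_{p,\mu}$ that turns uniform stochastic decay back into deterministic stability. I expect this reliance on the limit formula, together with the requirement of one $\gamma$ valid for all $p$ (which the monotone convergence $\rho_{p,\mu}\uparrow\hat\rho(\mathcal M)$ supplies), to be the conceptual crux, the remainder being bookkeeping.
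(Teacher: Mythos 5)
Your proposal is correct and follows essentially the same route as the paper: the necessity direction is the paper's Proposition~\ref{prop:pLyap} (whose proof is precisely your weighted-sum construction, truncated there to a finite sum \`a la Ando--Shih rather than summed as an infinite geometric series), applied with a single $\gamma\in(\hat\rho(\mathcal M),1)$ that works for all $p$ since $\rho_{p,\mu}\le\hat\rho(\mathcal M)$. The sufficiency direction is identical to the paper's: iterate the Lyapunov inequality to get $\rho_{p,\mu}\le\gamma$ for every $p$, then invoke the limit formula of Theorem~\ref{thm:main:equiv} and Proposition~\ref{prop:AAS.char}.
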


\begin{remark}
In contrast to the well known characterization of absolute exponential
stability with the existence of a single Lyapunov function called a
common Lyapunov function~\cite{Molchanov1989},
Theorem~\ref{thm:AASChar} characterizes absolute exponential stability
with the existence of {\it infinitely many} stochastic Lyapunov
functions. Also we notice that the above theorem deduces the existence
of Lyapunov functions from the absolute exponential stability and
hence can be considered as a version of converse Lyapunov
theorems~\cite{Molchanov1989,Dayawansa1999} in the systems and control
theory literature.
\end{remark}

For the proof of Theorem~\ref{thm:AASChar} we will need the next
proposition.

\begin{proposition}\label{prop:pLyap}
Let $\mu$ be a probability distribution on $\mathbb{R}^{n\times n}$.
\begin{enumerate}
\item If $\Sigma_\mu$ admits a stochastic Lyapunov function with
degree $p$ and growth rate~$\gamma<1$ then $\Sigma_\mu$ is $p$th mean
stable with growth rate~$\gamma$.

\item If $\Sigma_\mu$ is $p$th mean stable then, for every $\gamma \in
(\rho_{p,\mu}, 1)$, $\Sigma_\mu$ admits a stochastic Lyapunov function
with degree~$p$ and growth rate~$\gamma$.
\end{enumerate}
\end{proposition}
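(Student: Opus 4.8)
The plan is to prove the two implications separately, working with the trajectory $x(k) = A_{k-1}\dotsm A_0 x$ of $\Sigma_\mu$ started at $x(0) = x$.

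For the first statement I would iterate the decrease condition~\eqref{eq:def.pLyap.mono}. Since $A_k$ is independent of $x(k) = A_{k-1}\dotsm A_0 x$, conditioning on $x(k)$ and applying~\eqref{eq:def.pLyap.mono} gives $E[V(x(k+1)) \mid x(k)] \leq \gamma^p V(x(k))$, whence $E[V(x(k+1))] \leq \gamma^p E[V(x(k))]$ by the tower property. Iterating yields $E[V(x(k))] \leq \gamma^{pk} V(x)$. Sandwiching with the two-sided bound~\eqref{eq:def.pLyap.norm} then gives $C_1 E[\norm{x(k)}^p] \leq \gamma^{pk} V(x) \leq C_2 \gamma^{pk}\norm{x}^p$, so that $E[\norm{x(k)}^p] \leq (C_2/C_1)\gamma^{pk}\norm{x}^p$, which is precisely $p$th mean stability with growth rate~$\gamma$.

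For the second statement I would construct the Lyapunov function explicitly as the weighted sum of future expected energies,
\begin{equation*}
V(x) := \sum_{k=0}^\infty \gamma^{-pk} E[\norm{x(k)}^p], \quad x(0) = x.
\end{equation*}
The first task is to show this series converges and defines a function comparable to $\norm{x}^p$. Fixing $\gamma'$ with $\rho_{p,\mu} < \gamma' < \gamma$, the definition~\eqref{eq:defn:p-rad} of $p$-radius as a limit yields a constant $C'$ with $E[\norm{A_{k-1}\dotsm A_0}^p] \leq C'(\gamma')^{pk}$ for all $k$; hence $E[\norm{x(k)}^p] \leq C'(\gamma')^{pk}\norm{x}^p$ and the sum is dominated by a convergent geometric series in $(\gamma'/\gamma)^{p}$. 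The lower bound $V(x) \geq \norm{x}^p$ comes from the $k=0$ term and the upper bound $V(x) \leq C_2 \norm{x}^p$ from the geometric estimate, giving~\eqref{eq:def.pLyap.norm}.

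It remains to verify the decrease~\eqref{eq:def.pLyap.mono}. Taking a fresh $A$ following $\mu$, independent of the $A_j$ defining $V$, and using that $A_{k-1}\dotsm A_0 A$ is again a product of $k+1$ independent copies of $\mu$, I would reindex
\begin{equation*}
E[V(Ax)] = \sum_{k=0}^\infty \gamma^{-pk} E[\norm{x(k+1)}^p] = \gamma^p \sum_{j=1}^\infty \gamma^{-pj} E[\norm{x(j)}^p] = \gamma^p\bigl(V(x) - \norm{x}^p\bigr) \leq \gamma^p V(x),
\end{equation*}
establishing the claim with growth rate~$\gamma$. I expect the main obstacle to be the convergence step: one must convert the limit in the definition of $\rho_{p,\mu}$ into a \emph{uniform} geometric bound valid for all $k$, which is what makes the series summable and the comparison constants $C_1, C_2$ finite. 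The decrease identity is then a clean reindexing justified by independence, and $p$th mean stability guarantees $\rho_{p,\mu} < 1$ by Proposition~\ref{prop:StblCond}, so the interval $(\rho_{p,\mu}, 1)$ from which $\gamma$ is drawn is nonempty.
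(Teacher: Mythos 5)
Your proof of the first statement is essentially identical to the paper's: iterate the decrease condition~\eqref{eq:def.pLyap.mono} and sandwich with~\eqref{eq:def.pLyap.norm}. For the second statement you take a genuinely different (though related) route. The paper, following \cite{Ando1998}, uses the \emph{finite} sum~\eqref{eq:def:nnorm}, $V(x) = \sum_{k=0}^{k_0-1}\gamma^{-pk}E[\norm{A_k\dotsm A_1 x}^p]$, where $k_0$ is chosen so that $h_{k_0} = E[\norm{A_{k_0}\dotsm A_1}^p]^{1/pk_0}\leq\gamma$; boundedness is then immediate, and the decrease follows because the extra last term of the shifted sum satisfies $\gamma^{-pk_0}E[\norm{A_{k_0}\dotsm A_1x}^p]\leq (h_{k_0}/\gamma)^{pk_0}\norm{x}^p\leq\norm{x}^p$ and is absorbed by the $k=0$ term of $V$. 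You instead sum to infinity, which turns the decrease into an exact telescoping identity, $E[V(Ax)] = \gamma^p\bigl(V(x)-\norm{x}^p\bigr)$, but shifts the burden to convergence: you must upgrade the limit in~\eqref{eq:defn:p-rad} to a uniform geometric bound $E[\norm{A_k\dotsm A_1}^p]\leq C'(\gamma')^{pk}$ with $\rho_{p,\mu}<\gamma'<\gamma$, which you correctly do (the bound holds with no constant for all large $k$, and the finitely many remaining $k$ are absorbed into $C'$). Two small points you gloss over, both harmless: the interchange of the expectation over $A$ with the infinite sum in the decrease computation is justified by Tonelli's theorem since all terms are nonnegative, and finiteness and measurability of $V(Ax)$ follow from your geometric bound together with the standing assumption that $\supp\mu$ is bounded. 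In summary, both constructions are valid; the paper's truncation avoids all questions about infinite series, while yours yields the cleaner decrease identity and exhibits $V$ as the standard converse-Lyapunov series.
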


\begin{proof}
First assume that $\Sigma_\mu$ admits a stochastic Lyapunov
function~$V$ with degree~$p$ and growth rate~$\gamma<1$. Let $x_0 \in
\mathbb{R}^n$ be arbitrary. Using an induction we can show $E[V(x(k))]
\leq  \gamma^{pk} V(x_0)$. Therefore \eqref{eq:def.pLyap.norm} shows
$E[\norm{x(k)}^p] \leq (C_2/C_1) \gamma^{pk} \norm{x_0}^{p}$.  Thus
$\Sigma_\mu$ is $p$th mean stable with growth rate~$\gamma$.

On the other hand assume that $\Sigma_\mu$ is $p$th mean stable and
let $\gamma \in (\rho_{p, \mu} , 1)$ be arbitrary. We follow the
construction of Lyapunov functions in~\cite{Ando1998}. Define $h_k :=
E[\norm{A_k\dotsm A_1}^p]^{1/pk}$, where $A_1$, $A_2$, $\dotsc$ are
random variables following $\mu$ independently.  Since $h_k \to
\rho_{p, \mu}$ as $k\to\infty$, there exists $k_0$ such that $h_{k_0}
\leq \gamma$. Define $V\colon \mathbb{R}^n \to \mathbb{R}$ by
\begin{equation}\label{eq:def:nnorm}
V(x) := 
\sum_{k=0}^{k_0-1} \frac{E[\norm{A_k \dotsm A_1 x}^p]}{\gamma^{pk}}, 
\end{equation}
where, if $k=0$, the product $A_k\dotsm A_1$ is understood to be the
identity matrix with probability one. Let us show that $V$ is a
stochastic Lyapunov function for~$\Sigma_\mu$ with degree~$p$ and
growth rate~$\gamma$. It is immediate to see that $\norm{x}^p \leq
V(x) \leq [\sum_{k=0}^{k_0-1} {(h_k / \gamma)^{pk}} ]\norm{x}^p$,
where the first inequality can be obtained by truncating the series in
\eqref{eq:def:nnorm} at $k=0$. Moreover the independence of the random
variables $A_k$ yields
\begin{equation}\label{eq:E[V(Ax)]=...}
\begin{aligned}
E[{V(Ax)}]
&=
\sum_{k=0}^{k_0-1} \frac{E[\norm{A_{k+1} A_k \dotsm A_1 x}^p]}{\gamma^{pk}}
\\
&=
\gamma^p \sum_{k=1}^{k_0} \frac{E[\norm{A_k \dotsm A_1x}^p]}{\gamma^{pk}}.
\end{aligned}
\end{equation}
Since the last term of this sum can be bounded as
\begin{equation*}
\frac{E[\norm{A_{k_0} \dotsm A_1x}^p]}{\gamma^{{pk_0}}}
\leq
\frac{h_{k_0}^{{pk_0}}\norm{x}^p}{\gamma^{pk_0}}
\leq
\norm{x}^p, 
\end{equation*}
the equation~\eqref{eq:E[V(Ax)]=...} shows $E[V(Ax)] \leq \gamma^p
V(x)$. This completes the proof of the proposition.
\end{proof}

\begin{remark}
When $\mu$ is the uniform distribution on a finite set,
Proposition~\ref{prop:pLyap} \changed{can be seen to be true by just
taking the ($\epsilon$-)extremal norms of a finite set of matrices
studied in~{\cite{Protasov2008}}}{reduces to 
\cite[Proposition~1]{Protasov2008}, where for its proof the author makes use of
so-called extremal norms}.
\end{remark}

Now we prove Theorem~\ref{thm:AASChar}.

\begin{proof}[Proof of Theorem~\ref{thm:AASChar}]
Assume that $\Sigma_{\mathcal M}$ is absolutely exponentially stable.
Then there exist $C>0$ and $0\leq \gamma' < 1$ such that $\norm{x(k)}
< C\gamma'^k \norm{x_0}$ for any choice of the sequence
$\{A_k\}_{k=0}^\infty \subset \mathcal M$ and $x_0$. Now let $\gamma
\in (\gamma', 1)$ be arbitrary and let us fix $p\geq 1$. Since
$E[\norm{x(k)}^p] < C^p \gamma'^{pk}$, the system~$\Sigma_\mu$ is
$p$th mean stable with growth rate~$\gamma'$. Therefore, by
Proposition~\ref{prop:pLyap}, $\Sigma_\mu$ admits a stochastic
Lyapunov function with degree~$p$ and growth rate~$\gamma$.

On the other hand assume that there exists $\gamma <1$ such that, for
every $p\geq 1$, $\Sigma_\mu$ admits a stochastic Lyapunov function of
degree $p$ with growth rate $\gamma$. By Proposition~\ref{prop:pLyap},
$\Sigma_\mu$ is $p$th mean stable with growth rate $\gamma$, which
furthermore implies $\rho_{p, \mu} \leq \gamma$ by
Proposition~\ref{prop:StblCond}. Therefore $\lim_{p\to\infty} \rho_{p,
\mu} \leq \gamma <1$. Hence, by
\changed{Theorem~{\ref{thm:Limit}}}{Theorem~\ref{thm:main:equiv}}, we
obtain $\hat \rho(\mathcal M) < 1$, which gives the absolute
exponential stability of $\Sigma_{\mathcal M}$ by
Proposition~\ref{prop:AAS.char}.
\end{proof}

\subsection{Construction of Stochastic Lyapunov Functions}

The realization~\eqref{eq:def:nnorm} of a stochastic Lyapunov function
as a sum involving several expected values of products of matrices is
not useful in practice. In this section we will show that, if either
the conditions~\ref{item:p.even} or \ref{item:invariant} in
Proposition~\ref{prop:p-rad} holds, then we can construct stochastic
Lyapunov functions easily. 

The next theorem covers the case when \ref{item:p.even} holds.

\begin{theorem}\label{thm:lyapu.even}
Assume that $\Sigma_\mu$ is $p$th mean stable and \ref{item:p.even}
holds. Let $q := p/2$ and let $\gamma \in  (\rho_{p, \mu}, 1)$ be
arbitrary. Then the function
\begin{equation}\label{eq:V.even}
V(x) = (x^{\otimes q})^\top Hx^{\otimes q}, 
\end{equation} 
where the positive definite matrix~$H\in
\mathbb{R}^{n^{q}\times n^{q}}$ is a solution of the 
linear matrix inequality 
\begin{equation}\label{eq:LMI.even}
E[(A^{\otimes q})^\top H A^{\otimes q}] - \gamma^p H\preceq 0, 
\end{equation}
is a stochastic Lyapunov function for $\Sigma_\mu$ of degree $p$ with
growth rate $\gamma$.
\end{theorem}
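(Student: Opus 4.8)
The plan is to split the claim into two parts: (i) verifying that \emph{any} positive definite $H$ solving the linear matrix inequality~\eqref{eq:LMI.even} turns the $V$ of~\eqref{eq:V.even} into a stochastic Lyapunov function of degree $p$ with growth rate $\gamma$, and (ii) establishing that such an $H$ actually exists, so that the statement is not vacuous. Part (i) reduces to checking the two defining conditions \eqref{eq:def.pLyap.norm} and \eqref{eq:def.pLyap.mono}; part (ii) is where the hypotheses of $p$th mean stability and $\gamma > \rho_{p,\mu}$ enter. Throughout I set $q = p/2$, which is an integer since $p$ is even.

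For the norm bounds \eqref{eq:def.pLyap.norm} I would use multiplicativity of the Euclidean norm under Kronecker products, $\norm{x^{\otimes q}} = \norm{x}^q$, so that $\norm{x^{\otimes q}}^2 = \norm{x}^{2q} = \norm{x}^p$. Since $H$ is positive definite, bounding the quadratic form by its extreme eigenvalues gives $\lambda_{\min}(H)\norm{x}^p \le V(x) \le \lambda_{\max}(H)\norm{x}^p$, so \eqref{eq:def.pLyap.norm} holds with $C_1 = \lambda_{\min}(H)$ and $C_2 = \lambda_{\max}(H)$, both positive. For the decay condition \eqref{eq:def.pLyap.mono} the key identity is $(Ax)^{\otimes q} = A^{\otimes q}x^{\otimes q}$, which follows from \eqref{eq:PowerKron} applied with the vector $x$ regarded as an $n\times 1$ matrix. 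This yields $V(Ax) = (x^{\otimes q})^\top (A^{\otimes q})^\top H A^{\otimes q} x^{\otimes q}$, and taking expectations while passing $E$ inside the quadratic form gives $E[V(Ax)] = (x^{\otimes q})^\top E[(A^{\otimes q})^\top H A^{\otimes q}]\, x^{\otimes q}$. Inequality~\eqref{eq:LMI.even} then bounds the central matrix by $\gamma^p H$, so $E[V(Ax)] \le \gamma^p V(x)$ at once. Note that here the LMI as a full matrix inequality delivers the needed scalar inequality for the particular vector $x^{\otimes q}$ for free, so this direction is essentially routine Kronecker algebra.

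The main obstacle is part (ii), the feasibility of \eqref{eq:LMI.even} with a positive definite $H$. I would introduce the positive linear operator $\mathcal L(X) := E[(A^{\otimes q})^\top X A^{\otimes q}]$ on the symmetric $n^q\times n^q$ matrices, so that \eqref{eq:LMI.even} reads $\mathcal L(H)\preceq \gamma^p H$. The crucial computation is to identify the spectral radius of $\mathcal L$: using $\norm{M^{\otimes q}}_F = \norm{M}_F^q$ together with \eqref{eq:PowerKron}, one finds $\operatorname{tr}(\mathcal L^k(I)) = E[\norm{A_k\dotsm A_1}_F^{p}]$, whence $\operatorname{tr}(\mathcal L^k(I))^{1/k}\to \rho_{p,\mu}^p$ by the definition of $p$-radius and its norm-independence (Remark~\ref{rmk:normequiv}). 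Since $\mathcal L$ preserves the positive semidefinite cone and $I$ lies in its interior, this limit equals $\rho(\mathcal L)$, so $\rho(\mathcal L) = \rho_{p,\mu}^p$. As $\Sigma_\mu$ is $p$th mean stable we have $\rho_{p,\mu}<1$ by Proposition~\ref{prop:StblCond}, and since $\gamma>\rho_{p,\mu}$ we obtain $\gamma^p > \rho(\mathcal L)$. Hence the Neumann-type series $H := \sum_{k=0}^\infty \gamma^{-pk}\mathcal L^k(I)$ converges, satisfies $H\succeq I\succ 0$ from its $k=0$ term, and obeys $\mathcal L(H) = \gamma^p(H - I)\preceq \gamma^p H$; thus it is a positive definite solution of~\eqref{eq:LMI.even}.

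Finally, I would note that this existence argument dovetails with Proposition~\ref{prop:pLyap}: for even $p = 2q$ one can rewrite $\norm{A_k\dotsm A_1 x}^p = (x^{\otimes q})^\top (A_k^{\otimes q}\dotsm A_1^{\otimes q})^\top (A_k^{\otimes q}\dotsm A_1^{\otimes q}) x^{\otimes q}$, so the Lyapunov function produced there is already of the form~\eqref{eq:V.even} with $H$ a finite truncation of the above series, providing an explicit, computable solution of the LMI and exhibiting the present theorem as the concrete even-degree specialization of Proposition~\ref{prop:pLyap}. The one point demanding care is keeping the three relevant norms straight — Euclidean for the state bound, Frobenius for the spectral-radius computation, and operator norm if one instead verifies the truncated $H$ directly — each of which is multiplicative under Kronecker products but must be matched to the correct inequality.
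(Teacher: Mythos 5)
Your proposal is correct, and its verification half is essentially the paper's own Kronecker algebra; where you genuinely diverge is on the feasibility of the LMI~\eqref{eq:LMI.even}. The paper's proof is a reduction to the mean-square case: by \eqref{eq:p-rad.lift} and Proposition~\ref{prop:StblCond} the lifted system $\Sigma_{\mu^{\otimes q}}$ is mean square stable with $\rho_{2,\mu^{\otimes q}}=\rho_{p,\mu}^q<\gamma^q$, so the quoted Proposition~\ref{prop:squaremeanLyapunov} supplies a positive definite $H$ solving $E[B^\top HB]-\gamma^{p}H\preceq 0$ for $B$ following $\mu^{\otimes q}$ --- which is exactly \eqref{eq:LMI.even} by \eqref{eq:KronExpct} --- together with the degree-$2$ Lyapunov property of $W(y)=y^\top Hy$; the theorem then follows by setting $V(x)=W(x^{\otimes q})$. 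You instead prove existence from scratch: you introduce the positive operator $\mathcal{L}(X)=E[(A^{\otimes q})^\top XA^{\otimes q}]$, identify $\rho(\mathcal{L})=\rho_{p,\mu}^p$ via the trace computation and the fact that the spectral radius of a cone-preserving operator equals the growth rate of its orbit at an interior point of the cone, and sum the Neumann series $H=\sum_{k\geq 0}\gamma^{-pk}\mathcal{L}^k(I)$. Both arguments are sound: the paper's route is shorter and modular, outsourcing everything to \eqref{eq:p-rad.lift} and the cited mean-square result of \cite{Ogura2012b}, while yours is self-contained, makes the solvability of \eqref{eq:LMI.even} explicit and constructive, and your closing observation that the Lyapunov function of Proposition~\ref{prop:pLyap} is already of the form \eqref{eq:V.even} for even $p$ is a nice connection the paper does not draw. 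Two refinements would tighten your version: (a) the cone-theoretic fact you invoke deserves a citation or short proof --- or can be bypassed entirely, since vectorization represents $\mathcal{L}$ by the matrix $E[A^{\otimes p}]^\top$, so $\rho(\mathcal{L})\leq\rho(E[A^{\otimes p}])=\rho_{p,\mu}^p$ by Proposition~\ref{prop:p-rad} under \ref{item:p.even}, which is all the Neumann series needs; and (b) the truncated sum solves \eqref{eq:LMI.even} only when the truncation index $k_0$ is chosen, as in Proposition~\ref{prop:pLyap}, so that $h_{k_0}\leq\gamma$ (this is what yields $\mathcal{L}^{k_0}(I)\preceq\gamma^{pk_0}I$); an arbitrary truncation would not do.
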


To prove this theorem we need \changed{its special case}{the following
special case of the theorem given in \cite{Ogura2012b}}\delled{, which is
proved in~{\cite{Ogura2012b}}. We here quote the result for ease of
reference}.

\begin{proposition}\label{prop:squaremeanLyapunov}
Assume that $\Sigma_\mu$ is mean square stable. Let $\gamma \in
(\rho_{2,\mu}, 1)$ be arbitrary. Then the function~$V(x) = x^\top Hx$
on $\mathbb{R}^n$, where the positive definite matrix~$H \in
\mathbb{R}^{n\times n}$ is a solution of the linear matrix
inequality~$E[A^\top H A] - \gamma^2 H \preceq 0$, is a stochastic
Lyapunov function for $\Sigma_\mu$ with degree $2$ and growth rate
$\gamma$.
\end{proposition}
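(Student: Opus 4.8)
The plan is to separate the statement into two parts: (i) that \emph{any} positive definite $H$ solving the linear matrix inequality yields a degree-$2$ stochastic Lyapunov function with growth rate $\gamma$, which is immediate, and (ii) that such an $H$ actually exists, which carries all the content. For part (i), write $A$ for a random variable following $\mu$ and note that $V(x)=x^\top Hx$ obeys the sandwich bound $\lambda_{\min}(H)\norm{x}^2\leq V(x)\leq\lambda_{\max}(H)\norm{x}^2$ with both constants positive since $H\succ 0$, giving \eqref{eq:def.pLyap.norm} with $C_1=\lambda_{\min}(H)$ and $C_2=\lambda_{\max}(H)$. For the decrease condition, $E[V(Ax)]=E[(Ax)^\top H(Ax)]=x^\top E[A^\top HA]\,x$, so the inequality $E[A^\top HA]-\gamma^2 H\preceq 0$ immediately gives $E[V(Ax)]\leq\gamma^2 x^\top Hx=\gamma^2V(x)$, which is \eqref{eq:def.pLyap.mono} with $p=2$. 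Hence $V$ is a stochastic Lyapunov function of degree $2$ with growth rate $\gamma$.

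The substantive step is producing a positive definite solution of the inequality, and my plan is to build it explicitly. Let $A_1,A_2,\dotsc$ be independent copies of $\mu$, set $Q_k:=E[(A_k\dotsm A_1)^\top(A_k\dotsm A_1)]$ with the convention $Q_0=I$, and define $H:=\sum_{k=0}^\infty\gamma^{-2k}Q_k$. Each $Q_k$ is positive semidefinite, so $H\succeq Q_0=I$ is positive definite once the series converges. Convergence follows from mean square stability: since $(A_k\dotsm A_1)^\top(A_k\dotsm A_1)\preceq\norm{A_k\dotsm A_1}^2 I$ we have $Q_k\preceq h_k^{2k}I$, where $h_k:=E[\norm{A_k\dotsm A_1}^2]^{1/(2k)}\to\rho_{2,\mu}$ by \eqref{eq:defn:p-rad}; as $\rho_{2,\mu}<\gamma$, the operator norms $\gamma^{-2k}\norm{Q_k}\leq(h_k/\gamma)^{2k}$ decay geometrically, so the matrix series converges absolutely.

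It remains to verify the inequality, which is where I expect the only real bookkeeping. Taking $A$ a fresh independent copy of $\mu$ and using independence inside each expectation, $E[A^\top Q_k A]=E[(A_k\dotsm A_1A)^\top(A_k\dotsm A_1A)]=Q_{k+1}$, since $A_k\dotsm A_1A$ is a product of $k+1$ independent $\mu$-matrices and hence has the same law as $A_{k+1}\dotsm A_1$. Summing the shifted series gives $E[A^\top HA]=\sum_{k=0}^\infty\gamma^{-2k}Q_{k+1}=\gamma^2\sum_{j=1}^\infty\gamma^{-2j}Q_j=\gamma^2(H-I)$, so that $E[A^\top HA]-\gamma^2 H=-\gamma^2 I\preceq 0$; thus $H$ solves the inequality, with strict slack $\gamma^2 I$. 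Alternatively, existence can be read off Proposition~\ref{prop:pLyap}: its construction \eqref{eq:def:nnorm} for $p=2$ is the quadratic form $x^\top\tilde Hx$ with $\tilde H=\sum_{k=0}^{k_0-1}\gamma^{-2k}Q_k$, and the two defining properties of a Lyapunov function translate verbatim into $\tilde H\succeq I\succ 0$ and $E[A^\top\tilde HA]-\gamma^2\tilde H\preceq 0$. Either way, combining existence with part (i) completes the proof; the main obstacle is the convergence-and-shift argument establishing the positive definite solution, as the Lyapunov property itself is a one-line consequence of the inequality.
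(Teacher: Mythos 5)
Your proof is correct, but note that the paper offers no proof of this proposition at all: it is quoted as a special case of a theorem in \cite{Ogura2012b}, so there is no in-paper argument to match step by step. The closest internal comparison is the paper's proof of Proposition~\ref{prop:pLyap}, which builds the truncated Ando--Shih sum \eqref{eq:def:nnorm}; your construction is precisely its quadratic specialization, as your closing alternative observes: for $p=2$ and the Euclidean norm, \eqref{eq:def:nnorm} is the form $x^\top \tilde H x$ with $\tilde H=\sum_{k=0}^{k_0-1}\gamma^{-2k}Q_k$, and since for symmetric matrices a quadratic-form inequality holding for all $x$ is equivalent to the semidefinite ordering, the two Lyapunov properties are literally $\tilde H\succeq I$ and $E[A^\top\tilde HA]-\gamma^2\tilde H\preceq 0$ (the latter via the same bound $h_{k_0}\leq\gamma$ on the last term that the paper uses). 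Your infinite-series variant trades the paper's finite truncation for a convergence argument and buys in return the exact identity $E[A^\top HA]=\gamma^2(H-I)$, i.e.\ uniform slack $-\gamma^2 I$ rather than mere semidefiniteness; your splitting into ``any solution works'' plus ``a solution exists'' is also a cleaner reading of the statement, which tacitly asserts existence. Two points you leave implicit are harmless under the paper's standing hypotheses: interchanging the expectation with the infinite sum in $E[A^\top HA]=\sum_{k\geq 0}\gamma^{-2k}E[A^\top Q_kA]$ is justified by dominated convergence because $\supp\mu$ is assumed bounded throughout the paper, and the shift identity $E[A^\top Q_kA]=Q_{k+1}$ rests on the independence of $A$ from $A_1,\dotsc,A_k$ together with the fact that the law of a product of $k+1$ i.i.d.\ factors does not depend on their labeling, which your argument correctly invokes.
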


Let us prove Theorem~\ref{thm:lyapu.even}.

\begin{proof}[Proof of Theorem~\ref{thm:lyapu.even}]
Assume that $\Sigma_\mu$ is $p$th mean stable and let $\gamma \in
(\rho_{p, \mu}, 1)$ be arbitrary. Since \eqref{eq:p-rad.lift} and
Proposition~\ref{prop:StblCond} show $\rho_{2,\mu^{\otimes q}} =
\rho_{p, \mu}^{q} < 1$, the system~$\Sigma_{\mu^{\otimes q}}$ is mean
square stable again by Proposition~\ref{prop:StblCond}. Since
$\gamma^q > \rho_{p, \mu}^q = \rho_{2, \mu^{\otimes q}}$,
Proposition~\ref{prop:squaremeanLyapunov} implies that $\Sigma_\mu$
admits a stochastic Lyapunov function~$W(x) = (Hx, x)$ on
$\mathbb{R}^{n^q}$ with growth rate~$\gamma^q$. Moreover the
matrix~$H$ can be obtained as a solution of the matrix linear
inequality~$E[B^\top H B] - (\gamma^{q})^2 H \preceq 0$, where $B$ is
a random variable following $\mu^{\otimes q}$. This linear matrix
inequality is indeed equivalent to \eqref{eq:LMI.even}. Now define
$V\colon \mathbb{R}^n \to \mathbb{R}$ by \eqref{eq:V.even} or,
equivalently, by $V(x) := W(x^{\otimes q})$. Let us show that $V$ is a
stochastic Lyapunov function of degree~$p$ with growth rate~$\gamma$.
Using \eqref{eq:PowerKron} and \eqref{eq:KronExpct} we can see that
\delxed{\begin{equation} \label{eq:1Lyap->pLyap}
\begin{aligned}
\stmathed{E[V(Ax)]}
&\stmathed{=
E[W(A^{\otimes q}x^{\otimes q})]}
\\
&\stmathed{=
E[W(B x^{\otimes q})]}
\\
&\stmathed{\leq
(\gamma^q)^2 W(x^{\otimes q})}
\\
&\stmathed{=
\gamma^p V(x). }
\end{aligned}
\end{equation}}
\added{\begin{equation*}
\begin{aligned}
E[V(Ax)]
&=
E[W(A^{\otimes q}x^{\otimes q})]
\\
&=
E[W(B x^{\otimes q})]
\\
&\leq
(\gamma^q)^2 W(x^{\otimes q})
\\
&=
\gamma^p V(x). 
\end{aligned}
\end{equation*}}
To show that an inequality of the form \eqref{eq:def.pLyap.norm} holds
for $V$, notice that there exist positive constants $C_1, C_2$
satisfying $C_1\norm{y}^2 \leq W(y) \leq C_2 \norm{y}^2$ for every
$y\in\mathbb{R}^{n_q}$ because $H$ is positive definite. \changed{From
this inequality we can show}{Letting $y = x^{\otimes q}$ we obtain}
\eqref{eq:def.pLyap.norm} \changed{because}{by the well-known identity}
$\norm{x^{\otimes q}} = \norm{x}^q$ \added{(see, e.g.,
\cite{Brewer1978}) that holds} for a general $q$ and $x\in
\mathbb{R}^n$ {provided $\norm{\cdot}$ \changed{denotes}{is} the
Euclidean norm}. Hence $V$ is a stochastic Lyapunov function of degree
$p$ with growth rate~$\gamma$.
\end{proof}

Then we consider the condition~\ref{item:invariant}. In order to
proceed we \changed{will need the next basic lemma}{here quote a basic
result on $K$-positive matrices from~\cite{Vandergraft1968}}.

\begin{lemma}[\changed{[33, 25]}{\cite[Theorem~4.4]{Vandergraft1968}}]\label{lem:basic.cone}
Let $K$ be a proper cone \added{and assume $A >^K 0$}. \delled{If $A\geq^K
B\geq^K 0$ then $\rho(A)\geq \rho(B)$. Moreover if $A >^K 0$ then the
following statements are true.}
\begin{enumerate}
\item $A$ has a simple eigenvalue $\rho(A)$, which is greater than the
magnitude of any other eigenvalue of $A$;

\item The eigenvector corresponding to the eigenvalue $\rho(A)$ is in
$\interior K$.

\delxed{\item \delled{The dual cone~$K^*$ is a proper cone and $A^\top$ is
$K^*$-positive.}}
\end{enumerate} 
\end{lemma}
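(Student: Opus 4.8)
The plan is to prove this as a Perron--Frobenius theorem for the $K$-positive matrix $A$, in three stages: first producing a Perron eigenpair inside the cone (together with a dual one), then pinning its eigenvalue to $\rho(A)$, and finally upgrading to simplicity and strict dominance. Throughout I use that $A >^K 0$ implies $A \geq^K 0$ (i.e. $AK\subset K$), so $A$ is order-monotone.

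First I would construct the eigenvector by a fixed-point argument on a base of the cone. Fix any $g_0 \in \interior(K^*)$ and set $B := \{x \in K : g_0^\top x = 1\}$; since $g_0$ is strictly positive on $K\setminus\{0\}$ one has $g_0^\top x \geq c\norm{x}$ there for some $c>0$, so $B$ is a nonempty, compact, convex base of $K$. Because $A >^K 0$, every $x \in B \subset K\setminus\{0\}$ satisfies $Ax \in \interior K$, hence $g_0^\top Ax > 0$, so $T(x) := Ax/(g_0^\top Ax)$ maps $B$ continuously into itself. Brouwer's theorem yields a fixed point $x_0$, that is $Ax_0 = \lambda x_0$ with $\lambda = g_0^\top Ax_0 > 0$; and $\lambda x_0 = Ax_0 \in \interior K$ forces $x_0 \in \interior K$. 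Applying the same construction to $A^\top$ on the dual cone is legitimate, since $K$ proper makes $K^*$ proper and $A >^K 0$ makes $A^\top$ a $K^*$-positive matrix (for $x\in K\setminus\{0\}$ and $g\in K^*\setminus\{0\}$ one has $(A^\top g)^\top x = g^\top(Ax)>0$ because $Ax\in\interior K$), so it produces $g\in\interior(K^*)$ with $A^\top g = \lambda' g$. Pairing the two relations, $\lambda\, g^\top x_0 = g^\top A x_0 = \lambda' g^\top x_0$ with $g^\top x_0 > 0$ forces $\lambda' = \lambda$; I normalize $g^\top x_0 = 1$.

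Next I would identify $\lambda$ with $\rho(A)$ using the order-unit norm $\norm{y}_{x_0} := \inf\{t>0 : t x_0 - y \geq^K 0 \text{ and } t x_0 + y \geq^K 0\}$, which is a genuine norm because $x_0 \in \interior K$ and $K$ is proper. Since $A \geq^K 0$ is order-monotone, $x_0 - y \geq^K 0$ and $x_0 + y \geq^K 0$ give $\lambda x_0 - Ay \geq^K 0$ and $\lambda x_0 + Ay \geq^K 0$; thus the induced operator norm satisfies $\norm{A}_{x_0} \leq \lambda$, while $Ax_0 = \lambda x_0$ gives equality. Hence $\rho(A) \leq \norm{A}_{x_0} = \lambda$, and since $\lambda$ is an eigenvalue, $\rho(A) = \lambda$. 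Combined with the first step this already exhibits an eigenvector of $\rho(A)$ lying in $\interior K$.

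The crux, and the step I expect to be the main obstacle, is strict dominance and simplicity, where mere $K$-nonnegativity is insufficient and the strict inclusion $A(K\setminus\{0\}) \subset \interior K$ must be exploited. The clean route is to show that $(\lambda^{-1}A)^k \to x_0 g^\top$, a nonzero rank-one projection. The dual eigenrelation keeps the normalization constant along orbits: for $y \in \interior K$, $g^\top(\lambda^{-k}A^k y) = \lambda^{-k}(A^{\top k}g)^\top y = g^\top y$ for every $k$. Meanwhile, because $A$ sends $K\setminus\{0\}$ into $\interior K$, it strictly contracts the Hilbert projective metric on $\interior K$ (Birkhoff), so the rays $\mathbb{R}_+ A^k y$ converge to $\mathbb{R}_+ x_0$; combined with the constant $g$-value this forces $\lambda^{-k}A^k y \to (g^\top y)x_0$. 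Extending by linearity, since $\interior K$ spans $\mathbb{R}^n$, gives $(\lambda^{-1}A)^k \to x_0 g^\top$. A nonzero rank-one limit of the normalized powers is equivalent to $\lambda$ being an algebraically simple eigenvalue that strictly dominates every other eigenvalue in modulus, which yields both assertions, with the unique (up to scale) eigenvector $x_0 \in \interior K$. The delicate ingredient is the Hilbert-metric contraction, which one may instead replace by an elementary extremal argument: given a putative eigenvalue $\mu = \lambda e^{i\phi}$ with $|\mu|=\lambda$ and eigenvector $z=u+iv$, one takes the largest $c$ with $x_0 - c\,\mathrm{Re}(e^{i\theta}z) \geq^K 0$ for all $\theta$, obtains a nonzero boundary contact $y_0 \in \partial K$, and observes that $A y_0 \in \interior K$ while $A$ rotates the contact angle by $\phi$; tracking that boundary points are sent strictly inside forces $\phi = 0$, so $\mu = \lambda$. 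Either way, the strict positivity of $A$ on the cone is exactly what powers this final step.
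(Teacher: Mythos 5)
Your proposal cannot be compared against an internal proof, because the paper gives none: this lemma is quoted verbatim from \cite[Theorem~4.4]{Vandergraft1968} and used as a black box. What you have written is therefore a self-contained proof of a result the paper only cites, and it is essentially correct: the Brouwer fixed-point argument on the base $B=\{x\in K : g_0^\top x=1\}$ produces an eigenpair with $x_0\in\interior K$ (and, dually, $g\in\interior(K^*)$, using correctly that $K$ proper makes $K^*$ proper and $A>^K 0$ makes $A^\top$ $K^*$-positive); the order-unit norm $\norm{\cdot}_{x_0}$ pins the eigenvalue to $\rho(A)$; and the convergence $(\lambda^{-1}A)^k \to x_0 g^\top$ to a rank-one projection does yield both algebraic simplicity and strict spectral dominance. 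This is also a genuinely different route from the cited source: Vandergraft's treatment is matrix-theoretic, resting on an analysis of the Jordan canonical form of cone-invariant matrices, whereas yours is the classical Krein--Rutman/Birkhoff argument (fixed point on a cross-section plus projective contraction), which has the advantage of extending beyond finite dimensions. Two caveats. First, Birkhoff's theorem gives a \emph{uniform} contraction factor $\tanh(\Delta/4)<1$ only when the projective diameter $\Delta$ of the image $A(K\setminus\{0\})$ is finite; you should note that this holds here because $A(K\cap S^{n-1})$ is a compact subset of $\interior K$ (pointwise strict contraction alone would not suffice to force convergence of the rays). Second, your alternative ``extremal boundary-contact'' argument, as sketched, only rules out other eigenvalues of modulus $\rho(A)$; it does not by itself exclude a Jordan block at $\rho(A)$ or a repeated eigenvector, so ``either way'' overstates it --- simplicity still requires the rank-one power limit (or a separate argument), making route (a) the one that actually completes the proof.
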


Then we prove the next proposition. Recall that, for a proper cone $K$
and $f\in \interior(K^*)$ the matrix norm $\norm{\cdot}_f$ is defined
by \eqref{eq:normA_f}\added{, which is induced by the cone linear
absolute norm $\norm{\cdot}_f$ satisfying \eqref{eq:ConeAbs} and
\eqref{eq:ConeLin}}.

\begin{proposition}\label{prop:rho(M)}
Let $K \subset \mathbb{R}^n$ be a proper cone and assume that $M \geq^K
0$. Also let $\epsilon>0$ be arbitrary.  Then there exists $f \in
\interior{(K^*)}$ such that  $\norm{M}_f < \rho(M)+\epsilon$.
\end{proposition}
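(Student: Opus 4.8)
The plan is to reduce the general case $M \geq^K 0$ to the strictly $K$-positive case, where the cone linear absolute norm associated with the \emph{left} Perron eigenvector attains the spectral radius exactly, and then to recover the $\epsilon$-slack by a perturbation argument. Concretely, I would fix any $K$-positive matrix $P$ (such matrices exist since $\interior \pi = \{A : A >^K 0\}$ is nonempty by \eqref{eq:intpi(K)}; e.g.\ a rank-one matrix $v f^\top$ with $v\in\interior K$ and $f\in\interior(K^*)$) and, for $t>0$, set $M_t := M + tP$. For $x \in K\setminus\{0\}$ one has $Mx \in K$ and $tPx \in \interior K$, so $M_t x \in \interior K$; hence $M_t >^K 0$. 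Since the spectral radius depends continuously on the matrix entries, $\rho(M_t) \to \rho(M)$ as $t \to 0^+$, so it suffices to produce, for each fixed $t$, a functional $f_t \in \interior(K^*)$ with $\norm{M}_{f_t} \leq \rho(M_t)$; choosing $t$ small enough that $\rho(M_t) < \rho(M) + \epsilon$ then finishes the proof.

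For the fixed $K$-positive matrix $M_t$, I would invoke Lemma~\ref{lem:basic.cone}: $\rho(M_t)$ is a simple, strictly dominant eigenvalue. The key point is to obtain a left Perron eigenvector lying in $\interior(K^*)$, i.e.\ a vector $f_t$ with $M_t^\top f_t = \rho(M_t) f_t$ and $f_t \in \interior(K^*)$. This follows by applying Lemma~\ref{lem:basic.cone} to the pair $(M_t^\top, K^*)$ once one knows $M_t^\top >^{K^*} 0$; the latter is the dual Perron statement (see \cite{Vandergraft1968}) and can also be verified directly, since for $g \in K^*\setminus\{0\}$ and $x \in K\setminus\{0\}$ we have $M_t x \in \interior K$, whence $(M_t^\top g)^\top x = g^\top(M_t x) > 0$, so $M_t^\top g \in \interior(K^*)$. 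With such an $f_t$ in hand, form the cone linear absolute norm $\norm{\cdot}_{f_t}$, which satisfies $\norm{x}_{f_t} = f_t^\top x$ for $x \in K$ by \eqref{eq:ConeLin}. Then for every $x \in K\setminus\{0\}$,
\begin{equation*}
\norm{M_t x}_{f_t} = f_t^\top M_t x = (M_t^\top f_t)^\top x = \rho(M_t)\, f_t^\top x = \rho(M_t)\,\norm{x}_{f_t},
\end{equation*}
so the characterization \eqref{eq:norm_f} in Lemma~\ref{lem:SeidmanInducednorm} gives $\norm{M_t}_{f_t} = \rho(M_t)$.

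Finally, since $M_t \geq^K M \geq^K 0$, the monotonicity \eqref{eq:ProdMono} of Lemma~\ref{lem:SeidmanInducednorm} (with $k=1$) yields $\norm{M}_{f_t} \leq \norm{M_t}_{f_t} = \rho(M_t)$. Taking $t$ small enough that $\rho(M_t) < \rho(M) + \epsilon$ and setting $f := f_t$ completes the argument. I expect the main obstacle to be the middle step, namely guaranteeing that the left Perron eigenvector can be chosen in $\interior(K^*)$; this is precisely the dual $K$-positivity of $M_t^\top$ that is not included in the form of Lemma~\ref{lem:basic.cone} stated here. Once that is settled, the equality $\norm{M_t}_{f_t} = \rho(M_t)$ and the monotonicity estimate are routine, and continuity of $\rho$ delivers the $\epsilon$-approximation.
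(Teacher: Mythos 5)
Your proposal is correct and follows essentially the same route as the paper's own proof: perturb $M$ to the $K$-positive matrix $M+tP$, use continuity of the spectral radius, build the cone linear absolute norm from a left Perron eigenvector $f_t\in\interior(K^*)$ so that $\norm{M+tP}_{f_t}=\rho(M+tP)$, and finish with the monotonicity \eqref{eq:ProdMono}. The only (cosmetic) difference is that the paper extracts $f_t$ from the Jordan form of $M+tP$ and cites \cite{Berman1979} for the $K^*$-positivity of the transpose, whereas you verify that dual positivity directly and apply Lemma~\ref{lem:basic.cone} to the pair $(M_t^\top,K^*)$.
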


\begin{proof}
\delled{Let $\epsilon>0$ be arbitrary. }First assume $M>^K 0$. By
Lemma~\ref{lem:basic.cone} the matrix~$M$ admits the Jordan canonical
form $J = V^{-1} M V$ where $V\in \mathbb{R}^{n\times n}$ is an
invertible matrix whose columns are the generalized eigenvectors of
$M$ and $J \in \mathbb{R}^{n\times n}$ is of the form
\begin{equation*}
J = \begin{bmatrix}
J_0 &  0 \\
0   & \rho(M )
\end{bmatrix}
\end{equation*}
for some upper diagonal matrix $J_0 \in \mathbb{R}^{(n-1)\times
(n-1)}$. Define $f \in \mathbb{R}^n$ by
\begin{equation*}
V^{-1} = \begin{bmatrix}
*\\f^\top
\end{bmatrix}. 
\end{equation*}
Then we can easily see that $f$ is an eigenvector of $M ^\top$
corresponding to the eigenvalue $\rho(M)$. Since $K^*$ is proper
\added{and $M^\top$ is $K^*$-positive (see, e.g., \cite{Berman1979})},
Lemma~\ref{lem:basic.cone} shows $f \in \interior(K^*)$.  Also since
$f^\top M x  = \rho(M)f^\top x$, the equation~\eqref{eq:norm_f} shows
$\norm{M }_f = \rho(M)$.

Then we consider the general case of $M \geq^K 0$. Let $\epsilon>0$ be
arbitrary and take an arbitrary $P >^K 0$. Then there exists
$\delta>0$ such that $\rho(M+\delta P) < \rho(M) + \epsilon$ because
$\rho(M+\delta P)\to \rho(M)$ as $\delta\to 0$ by the continuity of
spectral radius. Since $M+\delta P>^K 0$, the above argument shows
that there exists $f \in K^*$ satisfying $\norm{M +\delta P}_{f} =
\rho(M  + \delta P) < \rho(M) + \epsilon$. Finally, since $0\leq^K M
\leq^K M+\delta P$, Lemma~\ref{lem:SeidmanInducednorm} shows
$\norm{M}_{f} \leq \norm{M  + \delta P}_f$ and thus we obtain the
desired inequality.
\end{proof}

The next theorem enables us to construct a stochastic Lyapunov
function when \ref{item:invariant} holds.

\begin{theorem} \label{thm:lyapu.invariant}
Assume that $\Sigma_\mu$ is $p$th mean stable and \ref{item:invariant}
holds. Let $\gamma \in (\rho_{p, \mu}, 1)$ be arbitrary. Then there
exists a cone linear absolute norm $\norm{\cdot}$ on
$\mathbb{R}^{n^p}$ such that $V(x) = \norm{x^{\otimes p}}$ is a
stochastic Lyapunov function for $\Sigma_\mu$ with degree~$p$ and
growth rate~$\gamma$.
\end{theorem}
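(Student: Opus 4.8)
The plan is to lift the problem to $\mathbb{R}^{n^p}$ via the Kronecker power and build $V$ from a cone linear absolute norm there. First I would invoke Lemma~\ref{lem:lifts}: since $\mathcal M = \supp\mu$ leaves the proper cone $K$ invariant, $\mathcal M^{\otimes p}$ leaves some proper cone $K_p \subset \mathbb{R}^{n^p}$ invariant. Set $M := E[A^{\otimes p}]$. Because each $A^{\otimes p}$ leaves $K_p$ invariant (so $A^{\otimes p} y \in K_p$ for $y\in K_p$, $\mu$-a.s.) and $K_p$ is closed and convex, the expectation $My = E[A^{\otimes p} y]$ stays in $K_p$; hence $M \geq^{K_p} 0$. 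Next, Proposition~\ref{prop:p-rad} (applicable by \ref{item:invariant}) gives $\rho(M) = \rho_{p,\mu}^p$, and since $\gamma > \rho_{p,\mu}$ we get $\rho(M) < \gamma^p$. Applying Proposition~\ref{prop:rho(M)} to $M \geq^{K_p} 0$ with $\epsilon := \gamma^p - \rho(M) > 0$ then yields $f \in \interior(K_p^*)$ with $\norm{M}_f < \gamma^p$. I would take $\norm{\cdot} := \norm{\cdot}_f$ as the desired cone linear absolute norm on $\mathbb{R}^{n^p}$ and define $V(x) := \norm{x^{\otimes p}}_f$.

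The two-sided bound \eqref{eq:def.pLyap.norm} is routine: the map $x \mapsto x^{\otimes p}$ is homogeneous of degree $p$ with $\norm{x^{\otimes p}} = \norm{x}^p$ in the Euclidean norm, and all norms on $\mathbb{R}^{n^p}$ are equivalent, so there are $c_1, c_2 > 0$ with $c_1\norm{x}^p \leq \norm{x^{\otimes p}}_f \leq c_2\norm{x}^p$, which is exactly \eqref{eq:def.pLyap.norm} for $V$.

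The decrease condition \eqref{eq:def.pLyap.mono} is the crux, and the main obstacle is that $x^{\otimes p}$ need not lie in $K_p$, so the linearity $\norm{y}_f = f^\top y$ of \eqref{eq:ConeLin} (valid only on $K_p$) cannot be used on $x^{\otimes p}$ directly. The remedy is the cone absolute property \eqref{eq:ConeAbs}. For any decomposition $x^{\otimes p} = v - w$ with $v, w \in K_p$, invariance of $K_p$ under $A^{\otimes p}$ gives $A^{\otimes p}v, A^{\otimes p}w \in K_p$, so $A^{\otimes p}(v+w)\in K_p$ and by \eqref{eq:ConeAbs} and \eqref{eq:ConeLin} one has $\norm{A^{\otimes p}x^{\otimes p}}_f \leq \norm{A^{\otimes p}(v+w)}_f = f^\top A^{\otimes p}(v+w)$ $\mu$-a.s. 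Taking expectations and pulling $f$ out of the integral gives $E[\norm{A^{\otimes p}x^{\otimes p}}_f] \leq f^\top M(v+w)$; since $M(v+w)\in K_p$, the right-hand side equals $\norm{M(v+w)}_f \leq \norm{M}_f\norm{v+w}_f = \norm{M}_f\, f^\top(v+w)$ by \eqref{eq:normA_f} and \eqref{eq:ConeLin}. As the left-hand side is independent of $(v,w)$, taking the infimum over all such decompositions and applying \eqref{eq:ConeAbs} once more yields $E[\norm{A^{\otimes p}x^{\otimes p}}_f] \leq \norm{M}_f\,\norm{x^{\otimes p}}_f$. Finally, using $(Ax)^{\otimes p} = A^{\otimes p}x^{\otimes p}$ from \eqref{eq:PowerKron}, this reads $E[V(Ax)] \leq \norm{M}_f\, V(x) < \gamma^p V(x)$, which is \eqref{eq:def.pLyap.mono} and completes the proof. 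The delicate point throughout is precisely this interchange of expectation and norm, which works only because the cone absolute/linear structure lets me replace the nonlinear norm by the linear functional $f$ on the cone before integrating.
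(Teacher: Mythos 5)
Your proof is correct and takes essentially the same route as the paper: both rest on Lemma~\ref{lem:lifts} to obtain the invariant cone $K_p$, on Proposition~\ref{prop:p-rad} to identify $\rho(E[A^{\otimes p}]) = \rho_{p,\mu}^p$, on Proposition~\ref{prop:rho(M)} to produce the cone linear absolute norm with $\norm{E[A^{\otimes p}]}_f < \gamma^p$, and on the cone absolute/linear decomposition trick to push the expectation through the norm. The only differences are organizational: the paper first proves the $p=1$ case and then applies it to the lifted distribution $\mu^{\otimes p}$, whereas you inline that reduction by working directly with $M = E[A^{\otimes p}]$ on $\mathbb{R}^{n^p}$, and you take an infimum over all cone decompositions where the paper uses a near-optimal $\delta$-decomposition --- equivalent arguments in substance.
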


\begin{proof}
Assume that $\Sigma_\mu$ is $p$th mean stable and let $\gamma \in
(\rho_{p, \mu}, 1)$ be arbitrary. Let $K$ be a proper cone left
invariant by $\supp \mu$.

\newcommand{\fnorm}[1]{\norm{#1}_f}
\newcommand{\gnorm}[1]{\norm{#1}_g}

First we consider the special case $p=1$. Since $A$ leaves $K$
invariant with probability one we have $E[A] \geq^{K} 0$. Also
Proposition~\ref{prop:p-rad} shows \mbox{$\rho(E[A]) =\rho_{1,\mu} <
\gamma$}. Therefore, by Proposition~\ref{prop:rho(M)}, there exists a
cone linear absolute norm $\fnorm{\cdot}$ on $\mathbb{R}^n$ such that
$\fnorm{E[A]} < \gamma$. Let us show that $V(x) = \norm{x}_f$ gives a
stochastic Lyapunov function for $\Sigma_\mu$ with degree~$1$ and
growth rate~$\gamma$.

The inequality of the form \eqref{eq:def.pLyap.norm} clearly holds for
some positive constants~$C_1$ and~$C_2$ by the equivalence of the
norms on a finite dimensional normed vector space. To show
\eqref{eq:def.pLyap.mono} let $x\in\mathbb{R}^n$ and $\delta>0$ be
arbitrary. Since $\fnorm{\cdot}$ is cone linear absolute there exist
$x_1, x_2\in K$ such that $x=x_1-x_2$ and $\fnorm{x_1} + \fnorm{x_2} =
\fnorm{x_1+x_2} \leq \fnorm{x} +\delta$. Moreover we have $Ax_i \in K$
and therefore $\fnorm{Ax_i} = f^\top  {Ax_i}$ with probability one.
Thus it follows that
\begin{equation*}
\begin{aligned}
E[\fnorm{Ax_i}] 
&=
f^\top  E[A] x_i
\\
&=
\fnorm{E[A]x_i} 
\\
&<
\gamma \fnorm{x_i}
\end{aligned}
\end{equation*}
for each \changed{$i$}{$i = 1, 2$}. Hence, since $\fnorm{Ax} =
\fnorm{Ax_1 - Ax_2} \leq \fnorm{A x_1} + \fnorm{A x_2}$,
\begin{equation*}
\begin{aligned}
E[\fnorm{Ax}]
&<
\gamma (\fnorm{x_1} + \fnorm{x_2})
\\
&
\leq
\gamma(\fnorm{x} +\delta).
\end{aligned}
\end{equation*}
Since $\delta>0$ was arbitrary we obtain $E[\fnorm{Ax}]\leq
\gamma\fnorm{x}$. This inequality shows that, since $x\in
\mathbb{R}^n$ was arbitrary, $\fnorm{\cdot}$ is a stochastic Lyapunov
function for $\Sigma_\mu$ with growth rate~$\gamma$ and degree~$1$.

Then let us give the proof for a general $p$. Since $\rho_{1,
\mu^{\otimes p}} = \rho_{p, \mu}^p <1$ by \eqref{eq:p-rad.lift},
$\Sigma_{\mu^{\otimes p}}$ is first mean stable by
Proposition~\ref{prop:StblCond}.  Also notice that, by
Lemma~\ref{lem:lifts}, $\supp (\mu^{\otimes p}) = (\supp \mu)^{\otimes
p}$ leaves a proper cone in $\mathbb{R}^{n^p}$, say $K_p$, invariant.
Thus, by the above result for $p=1$, since $\gamma^p > \rho_{p, \mu}^p
= \rho_{1, \mu^{\otimes p}}$, the system~$\Sigma_{\mu^{\otimes p}}$
admits a stochastic Lyapunov function~$\gnorm{\cdot}$ with growth
rate~$\gamma^p$ and degree~$1$, where $\gnorm{\cdot}$ is a cone linear
absolute norm on $\mathbb{R}^{n^p}$ with respect~$K_p$. Now we define
$V \colon \mathbb{R}^n \to \mathbb{R}$ by \mbox{$V(x) :=
\gnorm{x^{\otimes p}}$}. Then, in the same way as
\changed{{\eqref{eq:1Lyap->pLyap}}}{the proof of
Theorem~\ref{thm:lyapu.even}}, we can show that $V$ is a stochastic
Lyapunov function for $\Sigma_\mu$ with degree~$p$ and growth
rate~$\gamma$.
\end{proof}

\begin{example}\label{ex:}
Consider the probability distribution 
\begin{equation*}
\mu = \begin{bmatrix}
[0, 1.5]& [0,1.8]\\
[0,0.15]&[0,1.2]
\end{bmatrix}, 
\end{equation*}
where each interval denotes the uniform distribution on it. Clearly
$\supp \mu$ leaves the proper cone~$\mathbb{R}^{2}_+$ invariant and
moreover we can see \mbox{$\rho(E[A]) < 1$}.  Therefore
Propositions~\ref{prop:StblCond} and~\ref{prop:p-rad} show that
$\Sigma_\mu$ is first mean stable and hence, by
Proposition~\ref{prop:pLyap}, $\Sigma_\mu$ admits a stochastic
Lyapunov function of degree~$1$. Following the proof of
Theorem~\ref{thm:lyapu.invariant} we find a stochastic Lyapunov
function~$\norm{x}_f$ for $\Sigma_\mu$ where $f = [0.3838\quad
1]^\top$. We generate $200$ sample paths of $\Sigma_\mu$ with the
initial state~$x_0 = [0\quad 1]^\top$.
\begin{figure}[tb]
\begin{center}
\includegraphics[height=6cm]{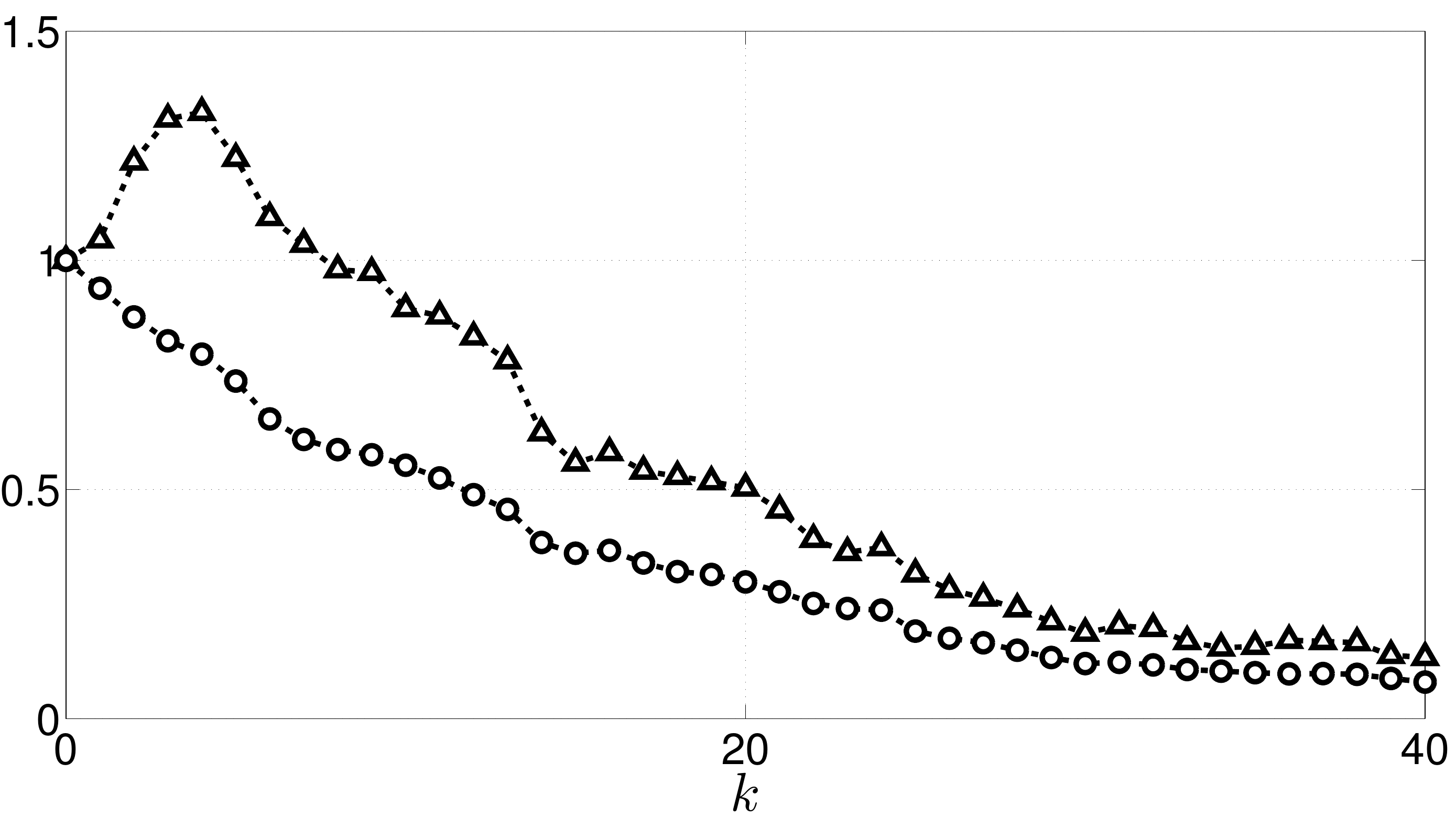}  \caption{The
sample means of the Lyapunov function (circle) and the Euclidean norm
(triangle)} \label{fig:Lyapunov}
\end{center}
\end{figure}
Figure~\ref{fig:Lyapunov} shows the sample means of the stochastic
Lyapunov function~$\norm{x(k)}_f$ and the Euclidean
norm~$\norm{x(k)}$. We can see that the \add{sample mean of} the
Lyapunov function decreases at the most of time instances, while
\add{that of} the Euclidean norm shows an oscillating behavior.
\add{It is remarked that the sample mean of the Lyapunov function is
not necessarily decreasing  because it is actually different from the
expectation. Taking more sample paths in general makes the sample mean
closer to the expectation by the law of large numbers and therefore is
more likely to yield a decreasing sample mean.}
\begin{figure}[tb]
\begin{center}
\includegraphics[height=6cm]{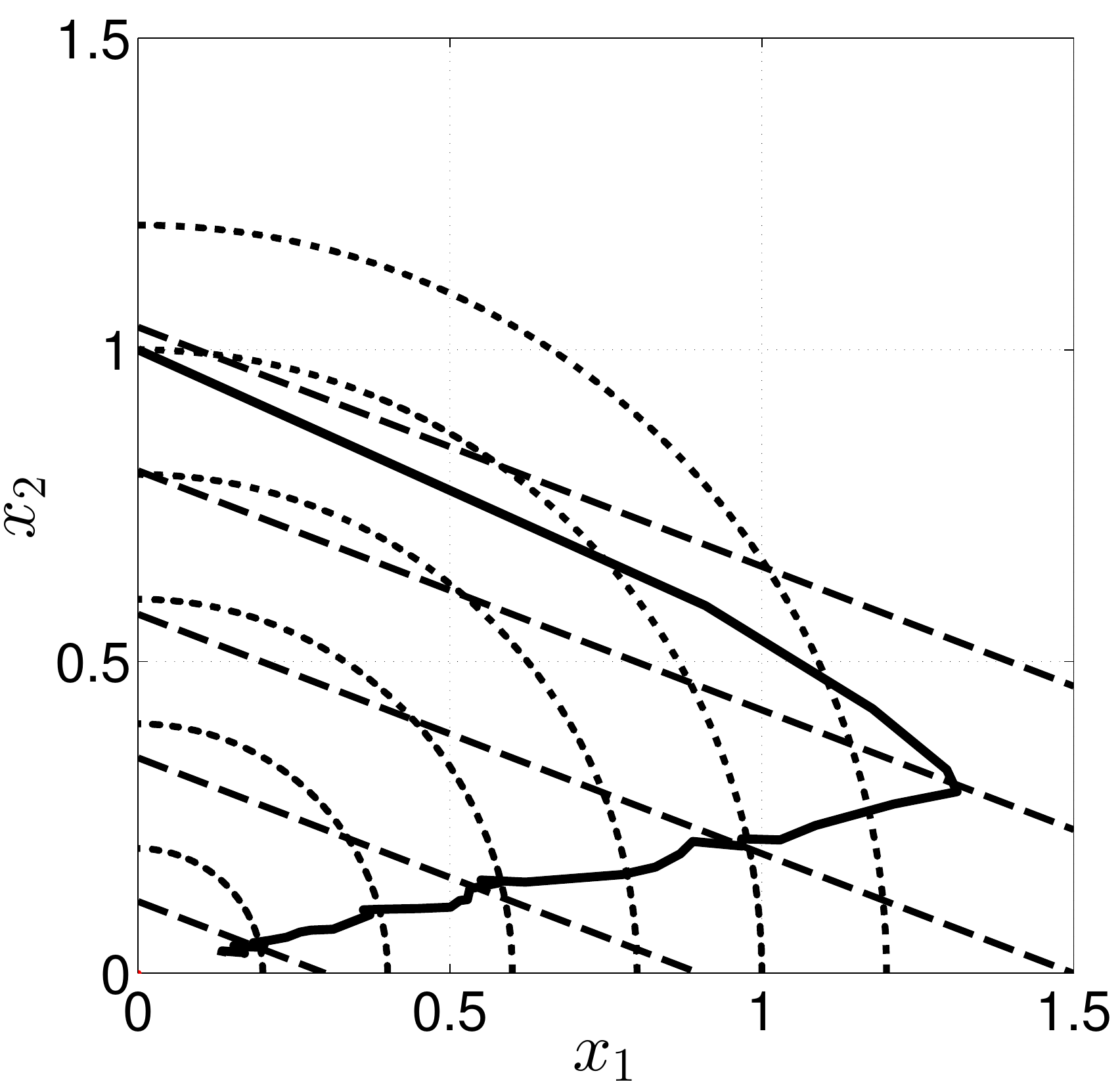}  \caption{The averaged
sample path (solid) and the level plots of the Lyapunov function
(dashed) and the Euclidean norm (dotted)} \label{fig:Level}
\end{center}
\end{figure}
Figure~\ref{fig:Level} shows the average of the sample paths and the
contour plot of the stochastic Lyapunov function and the Euclidean
norm. \added{The figure graphically illustrates that the sample mean is
evolving in such a way that the value of constructed Lyapunov function
almost decreases.}
\end{example}

\section{Conclusion and Discussion}\label{sec:concl}

This paper presented a characterization of the joint spectral radius
of a set of matrices as the limit of the $L^p$-norm joint spectral
radius of a probability distribution \added{on the set} when $p \to
\infty$ \added{under the assumption that the distribution has a certain
regularity and a support leaving a proper cone invariant}. The
obtained characterization extends the ones in the literature by
allowing the set to have infinitely many matrices. \changed{We made use
of nearly most unstable trajectories of an associated switched linear
system to derive the characterization.}{} Based on the result, we also
presented a novel characterization of the absolute exponential
stability of switched linear systems via the existence of stochastic
Lyapunov functions of any higher degrees. The construction of
stochastic Lyapunov functions is also studied.

\added{Understanding the behavior of $p$-radius as $p\to 0$ is one of
the problems closely related to the problem studied in this paper. It
is known~\cite{Fang2002} that, as $p \to 0$, the $p$-radius converges
to so-called Lyapunov exponent \cite{Arnold1984a} of random products
of matrices, which is known to characterize so-called almost sure
stability of stochastic switched systems. However the characterization
in \cite{Fang2002} is proved under the assumption that the number of
matrices in the set from which one takes a matrix is finite. It would
be interesting to investigate if one can allow the number of matrices
to be infinite with the approach taken in this paper. }





\ifdefined\vercomment
	\newpage
	
	\setcounter{page}{1}
	\newcommand\enumspacing{%
	\setlength{\parskip}{8pt}
	\setlength{\itemsep}{11pt}}
	
	\include{Response}

\else
\fi

\end{document}